\documentclass[11pt,reqno]{amsart}

\usepackage{a4wide,enumerate,color,graphicx}
\usepackage{amsmath,bm}
\usepackage{scrextend} 
\allowdisplaybreaks
\usepackage[normalem]{ulem}
\usepackage[dvipsnames]{xcolor}
\usepackage{setspace}
\usepackage{amsbsy}
\usepackage{hyperref}

\newcommand{\eps}{\varepsilon}
\newcommand{\del}{\partial}
\newcommand{\diver}{\operatorname{div}}

\newtheorem{theorem}{Theorem}
\newtheorem{lemma}[theorem]{Lemma}

\numberwithin{equation}{section}
\hypersetup{
    colorlinks=true,
    linkcolor=blue,
    filecolor=blue,      
    urlcolor=blue,
}

 
\begin{document}

\title[Non-isothermal Multicomponent Flows]{Asymptotic Derivation of Multicomponent Compressible Flows with Heat Conduction and Mass Diffusion}

\author[S. Georgiadis]{Stefanos Georgiadis}
\address{Computer, Electrical and Mathematical Science and Engineering Division,
King Abdullah University of Science and Technology (KAUST),
Thuwal 23955-6900, Saudi Arabia and Institute for Analysis and Scientific Computing, Vienna University of Technology, Wiedner Hauptstra\ss e 8--10, 1040 Wien, Austria}
\email{stefanos.georgiadis@kaust.edu.sa} 

\author[A. Tzavaras]{Athanasios E. Tzavaras}
\address{Computer, Electrical and Mathematical Science and Engineering Division,
King Abdullah University of Science and Technology (KAUST),
Thuwal 23955-6900, Saudi Arabia}
\email{athanasios.tzavaras@kaust.edu.sa}

\begin{abstract}
A Type-I model of a multicomponent system of fluids with non-constant temperature is derived as the high-friction limit of a Type-II model via a Chapman-Enskog expansion. The asymptotic model is shown to  fit into the general theory of hyperbolic-parabolic systems, by exploiting the entropy structure inherited through the asymptotic procedure. Finally, by deriving the relative entropy identity for the Type-I model, two convergence results for smooth solutions are presented, from the system with mass-diffusion and heat conduction to the corresponding system without mass-diffusion but including heat conduction and to its hyperbolic counterpart.
\end{abstract}

\date{\today}
\subjclass[2020]{35Q35, 76M45, 76N15, 76T30, 80A17.}
\keywords{Multicomponent systems, Euler flows, Non-isothermal model, Chapman-Enskog expansion, Hyperbolic-parabolic, Relative entropy, Bott-Duffin inverse.}  

\maketitle


\section{Introduction}

Multicomponent systems of fluids occur often in nature and in industry: the Earth's atmosphere consists of nitrogen, oxygen, argon, carbon dioxide and small amounts of other gases. Natural gas is made of gaseous hydrocarbons such as methane, ethane, propane. The widespread presence of multicomponent fluids suggests the importance in understanding their modeling and being able to predict their behavior. 

Depending on the phenomenon and the application, models might vary containing more or less modeling detail. While one might prefer the more detailed model, it is conceivable that modeling detail might not be available or that it cannot be experimentally measured. There should be a balance: a theory must be detailed enough to sufficiently describe a phenomenon, but not so detailed that significant phenomena are hard to comprehend \cite{DP}. Modeling of multicomponent fluids is a well-developed subject and the reader is referred to \cite{Mue,MR,DP,GioMulti,Rug,BDre} for various approaches. 

In a multicomponent theory with primitive variables mass density, velocity and temperature, one distinguishes among three classes of models: In a Type-I model, each component is described by its own mass density, but the components move with a common velocity and have a common temperature. In Type-II models, each component is described by its own mass density and velocity, but the components have a common temperature. 
Type-III models (which are not considered here) are described via the individual densities, velocities and temperatures of each component.
For information regarding Type-III models we refer to \cite{Rug}, while for a short discussion on motivations and reasons to employ each model we refer to \cite{BDre}.

It is useful to have systematic ways of passing from a detailed theory to a less detailed one, which is mathematically easier to handle and experimentally easier to measure. 
Previous works trying to pass from a Type-II to a Type-I model are already known, for example in \cite{BDre} in which the authors investigate the reduction of a non-isothermal model using an entropy invariant method, or in \cite{HJT}, in which the reduction is done using asymptotic methods, but for the isothermal case and for a simple mixture (see Appendix \ref{appB}). The first objective of this paper is to derive a Type-I model using the asymptotic method of \cite{HJT}, but for the non-isothermal, non-simple mixture model treated in \cite{BDre}.

To this end, consider the Type-II system of equations for multicomponent fluids: 
\begin{equation}\label{intro-eq1}
    \del_t\rho_i+\textnormal{div}(\rho_iv_i)=0
\end{equation}
 \begin{equation}\label{intro-eq2}
   \del_t (\rho_iv_i)+\textnormal{div}(\rho_iv_i\otimes v_i)=\rho_ib_i-\rho_i\nabla\mu_i-\frac{1}{\theta}(\rho_ie_i+p_i-\rho_i\mu_i)\nabla\theta-\frac{\theta}{\epsilon}\sum_{j\not=i}b_{ij}\rho_i\rho_j(u_i-u_j)
\end{equation} 
\begin{equation}\label{intro-eq3}
\begin{split}
   &\del_t  \left(\rho e+\sum_i\frac{1}{2}\rho_iv_i^2\right) +\textnormal{div}\left(\left(\rho e+\sum_i\frac{1}{2}\rho_iv_i^2\right)v\right) 
   \\
   &\quad = \textnormal{div}\left(\kappa\nabla\theta-\sum_i(\rho_ie_i+p_i+\frac{1}{2}\rho_iv_i^2)u_i\right) 
        - \textnormal{div}(pv)+\rho b\cdot v+\rho r+\sum_i\rho_ib_i\cdot u_i
\end{split}
\end{equation}
Equations \eqref{intro-eq1} are the partial mass balances and \eqref{intro-eq2} the partial momentum balances of the $n$ components of the fluid, while \eqref{intro-eq3} is the balance of total energy of the mixture. The index $i\in\{1,\dots,n\}$ refers to the $i$-th component of the fluid. The prime variables are the mass densities $\rho_i$, the partial velocities $v_i$ and the temperature of the mixture $\theta$. We define the total mass $\rho=\sum_i\rho_i$, the barycentric velocity of the mixture $v := \frac{1}{\rho} \sum_i\rho_iv_i$ and the diffusional velocities $u_i :=v_i-v$;
the latter satisfy $\sum_i\rho_iu_i=0$. Moreover, we have the following thermodynamic quantities: the chemical potentials $\mu_i$, the partial pressures $p_i$, the specific internal energies $e_i$ and we define the total pressure $p=\sum_ip_i$ and the thermal energy $\rho e=\sum_i\rho_ie_i$. Furthermore, $b_i$ stands for the body forces acting on the $i$-th component and $r_i$ for the heat supply due to radiation, where $\rho b=\sum_i\rho_ib_i$ is the total body force and $\rho r=\sum_i\rho_ir_i$ the total heat supply due to radiation. Finally, $\kappa=\kappa(\rho_1,\dots,\rho_n,\theta)\ge0$ is the heat conductivity and $b_{ij}=b_{ij}(\rho_i,\rho_j,\theta)$ are strictly positive and symmetric coefficients that model the interaction between the $i$-th and $j$-th components with a strength that is measured by $\epsilon>0$.

The system is complemented by a set of constitutive relations, which read \begin{equation} \label{intro-const1}
    \rho\psi=\rho\psi(\rho_1,\dots,\rho_n,\theta)
\end{equation} \begin{equation} \label{intro-const2}
    \rho\psi=\rho e-\rho\eta\theta
\end{equation}
\begin{equation} \label{intro-const3}
    (\rho\psi)_{\rho_i}=\mu_i
\end{equation} \begin{equation} \label{intro-const4}
    (\rho\psi)_\theta=-\rho\eta
\end{equation} where $\rho\psi$ is the Helmholtz free-energy and $\rho\eta$ the entropy, along with the Gibbs-Duhem relation, for determining the total pressure: 
\begin{equation} \label{intro-gd}
    \rho\psi+p=\sum_i\rho_i\mu_i
\end{equation} as explained in the two appendices. The above type-II model is proposed in \cite{BDre}. The format of equations \eqref{intro-const1}-\eqref{intro-const4} is
motivated by the usual considerations of equilibrium thermodynamics, while the model is consistent with the Clausius-Duhem inequality. 
For the reader's convenience we present an outline of its thermodynamic structure in Appendices \ref{appA} and \ref{appB}, while for further elaboration and detail we refer to \cite{DP,GioMulti,BDre}.

In addition to system \eqref{intro-eq1}-\eqref{intro-eq3}, consider also the system 
\begin{align}
\label{intro-eq4}
    \del_t\rho_i +\textnormal{div}(\rho_iv) &=-\textnormal{div}(\epsilon\rho_i\tilde{u}_i)
\\
\label{intro-eq5}
    \del_t(\rho v)+\textnormal{div}(\rho v\otimes v) &=\rho b-\nabla p
\\
\label{intro-eq6}
\begin{split}
   \del_t \left(\rho e+\frac{1}{2}\rho v^2\right)+\textnormal{div}\left((\rho e+\frac{1}{2}\rho v^2)v\right) &= \textnormal{div}\left(\kappa\nabla\theta-\epsilon\sum_i(\rho_ie_i+p_i)\tilde{u}_i\right) \\
    &\quad - \textnormal{div}(pv)+\rho r+\rho b\cdot v+\sum_i\rho_ib_i\cdot u_i
\end{split}
\end{align}
where $u_i=\epsilon\tilde{u}_i$ is determined by solving the constrained algebraic system of Maxwell-Stefan type
\begin{equation}
\label{intro-eq7}
\begin{aligned}
    -\sum_{j\not=i}b_{ij}\theta\rho_i\rho_j(u_i-u_j) &=\epsilon\left(-\frac{\rho_i}{\rho}\nabla p+\rho_i\theta\nabla\frac{\mu_i}{\theta}-\theta(\rho_ie_i+p_i)\nabla\frac{1}{\theta}\right)
    \\
    \sum_i \rho_i u_i &= 0
\end{aligned} 
\end{equation}
The system \eqref{intro-eq4}-\eqref{intro-eq7} forms a Type-I model, with the same notation as before and the same constitutive relations. The mathematical structure of multicomponent systems falls into the general realm of hyperbolic-parabolic systems \cite{GM,BGP} and follows the general
framework originated by Kawashima \cite{Kaw}. One objective here is to study their structure from the perspective of relative entropy identities 
and the general theory in \cite{CT}.

Analytical results on multi-component flows extend in various directions.
We refer to \cite{Bot,JS,MaxS} for  existence and uniqueness studies of strong and/or weak solutions 
for Maxwell-Stefan systems of mass diffusion, situations that involve no mean flow and pertain to the general area of parabolic systems.
By contrast, situations that involve mean flow lead to questions in the realm of hyperbolic or hyperbolic-parabolic systems.
There are available analyses for isothermal viscous flows of multicomponent systems \cite{BDru} and even for multicomponent compressible Euler flows  \cite{PSZMulti,OR,PSZTwo,YZ}.  For analyses of non-isothermal multicomponent systems that include effects of heat-conduction we refer to \cite{MPZConstr,ZatMix,MPZExi,BJPZ}. The above works concern the mathematical structure of multi-component systems and existence of
solutions for steady and dynamic problems. There has been recent interest in the convergence from compressible multicomponent Euler 
equations in the high-friction limit to Maxwell-Stefan systems, a problem pertaining to the subject of relaxation approximations.
A number of studies have appeared regarding isothermal flows \cite{BGP,GYY,HJT,OR} achieving in the limit the classical Maxwell-Stefan system \cite{BGP},
 or (for more general chemical potentials) porous media variants or even fourth order diffusions \cite{HJT, OR}.

The focus of the present work is on the system \eqref{intro-eq4}-\eqref{intro-eq7} modeling non-isothermal multi-component flows 
that include the effects of mass-diffusion and heat conduction but no viscous effects.  As is typical for Maxwell-Stefan systems, a key difficulty arises from the inversion
of \eqref{intro-eq7}. First, the Chapman-Enskog analysis from the isothermal case \cite{GYY,HJT} is extended to the non-isothermal case
thus obtaining an asymptotic derivation of the Type-I model from the Type-II model \eqref{intro-eq1}-\eqref{intro-eq3}. 
Next,  system \eqref{intro-eq1}-\eqref{intro-eq3} is complemented with its corresponding entropy identity \eqref{eq-entropytypeII} and, by 
performing an asymptotic analysis  to the entropy identity, we obtain the entropy identity 
\eqref{eq-entropytypeI} for the system  \eqref{intro-eq4}-\eqref{intro-eq7}. This analysis indicates how the emerging system inherits 
the dissipative structure of the original system.

 The second step is to verify that \eqref{intro-eq4}-\eqref{intro-eq7} fits into the general framework of systems of hyperbolic-parabolic type as introduced in \cite{Kaw} and generalized in \cite{CT}.
 There are two aspects to this question: (i) to address the connection between the "mathematical entropy" and the thermodynamic structure of the model
 and (ii) to identify the dissipative structure of mass-diffusion and heat conduction.
The latter connects to the issue of inversion of the constrained algebraic system \eqref{intro-eq7}.
This is  overcome by using the notion of the Bott-Duffin inverse of a matrix which was brought to the subject of Maxwell-Stefan systems in \cite{MaxS};
the latter provides an important ingredient for  inverting the algebraic system and comparing the entropic structure of the Type-I model 
with the usual entropic structure of hyperbolic-parabolic systems in the format discussed in \cite{Kaw,CT}.

Third, we derive a relative entropy identity for \eqref{intro-eq4}-\eqref{intro-eq7} using the methodological approach of relative entropy \cite{Daf,iesan,CT}.
We compute the relative entropy identity \eqref{conv-eq15} which monitors the evolution of \eqref{relen} and is remarkable in its simplicity.
This identity is, in turn, used in order to prove convergence from strong solutions of the system \eqref{intro-eq4}-\eqref{intro-eq7} to strong solutions of heat-conducting
multicomponent Euler system when the mass-diffusivity $\epsilon$ tends to zero. Also to prove convergence to smooth solutions
of multi-component adiabatic Euler flows when both heat conductivity $\kappa$ and mass diffusivity $\epsilon$ tend to zero.

The outline of this work is as follows.
In section \ref{sec2}, we derive \eqref{intro-eq4}-\eqref{intro-eq7} as the high-friction limit of system \eqref{intro-eq1}-\eqref{intro-eq3}, by doing a Chapman-Enskog expansion,
and present the asymptotic derivation of the entropy identity \eqref{eq-entropytypeI}.
In section \ref{sec3}, we verify that \eqref{intro-eq4}-\eqref{intro-eq7} fits into the general framework of systems of hyperbolic-parabolic type  \cite{Kaw,CT} and show that the system without mass diffusion and heat conduction, obtained by setting $\epsilon=\kappa=0$, is hyperbolic, by computing its wave speeds. 
In section \ref{sec5}, using the methodological approach of relative entropy \cite{Daf,iesan,CT}, we derive the relative entropy identity \eqref{conv-eq15}, which is used
to prove convergence from strong solutions of \eqref{intro-eq4}-\eqref{intro-eq7} to strong solutions of the heat-conducting multicomponent Euler flows
as $\epsilon \to 0$ or to adiabatic multicomponent Euler flows when both $\epsilon, \kappa \to 0$.
We conclude with four appendices: the first two appendices present the natural modeling framework of multicomponent Type-II models
 \eqref{intro-eq1}-\eqref{intro-eq3}, outline its thermodynamical structure and the results of \cite{BDre} on the consistency of the model with the Clausius-Duhem inequality;  the third appendix provides the computations for deriving the entropy equation of system \eqref{intro-eq1}-\eqref{intro-eq3}, while the fourth one the computation for the relative entropy identity.


\section{Chapman-Enskog Expansion of the Type-II Model}
\label{sec2}

The goal of the asymptotic procedure is to obtain from \eqref{intro-eq1}-\eqref{intro-eq3} a Type-I model via a Chapman-Enskog expansion that will provide an approximation of the initial system of order $\epsilon^2$. The resulting system will consist of $n$ partial momentum balances, a single (total) momentum balance and one total energy balance, thus the name Type-I. It will also be complemented by the same constraint $\sum_i\rho_iu_i=0$ (at least up to order $\mathcal{O}(\epsilon^2)$) and a linear system for determining the diffusional velocities $u_i$.

The resulting system should contain no partial velocities. Using the definition of the diffusional velocities $u_i=v_i-v$ we perform the change of variables $(v_1,\dots,v_n)\mapsto(v,u_1,\dots,u_n)$. Then system \eqref{intro-eq1}-\eqref{intro-eq3} reads: \begin{equation}
\label{cee-eq1}
   \del_t \rho_i +\textnormal{div}(\rho_iu_i+\rho_iv)=0
\end{equation} \begin{equation}
\label{cee-eq2}
\begin{split}
    \del_t \left(\rho_i(u_i+v)\right)+\textnormal{div}(\rho_i(u_i+v)\otimes(u_i+v))=\rho_ib_i & - \rho_i\nabla\mu_i-\frac{1}{\theta}(\rho_ie_i+p_i-\rho_i\mu_i)\nabla\theta \\
    & -\frac{\theta}{\epsilon}\sum_{j\not=i}b_{ij}\rho_i\rho_j(u_i-u_j)
\end{split}
\end{equation} \begin{equation}
\label{cee-eq3}
    \begin{split}
        \del_t\Big(\rho e & + \sum_i\frac{1}{2}\rho_i(u_i+v)^2\Big)+\textnormal{div}\left(\Big(\rho e+\sum_i\frac{1}{2}\rho_i(u_i+v)^2\Big)v\right)=-\textnormal{div}(pv)+\rho b\cdot v \\
        & + \textnormal{div}\left(\kappa\nabla\theta-\sum_i(\rho_ie_i+p_i+\frac{1}{2}\rho_i(u_i+v)^2)u_i\right) +\rho r+\sum_i\rho_ib_i\cdot u_i
    \end{split}
\end{equation}  subject to the constraint \begin{equation}\label{cee-eq4}
    \sum_i\rho_iu_i=0
\end{equation}

\subsection{Hilbert Expansion}
\label{sec2.1}
Observe that, as $\epsilon \to 0$, we formally obtain the equation 
$$
\displaystyle{-\theta \sum_{j\not=i}b_{ij}\rho_i\rho_j(u_i-u_j) = 0} \, .
$$
In \cite{HJT} it was proved that the system \begin{equation} \label{cee-eq19}
    -\sum_{j\not=i}B_{ij}(u_i-u_j)=d_i, ~~ i=1,\dots,n~~\textnormal{subject to}~~\sum_{i}\rho_iu_i=0
\end{equation} under the hypotheses: 

(i) $d_1,\dots,d_n\in\mathbb{R}^3$ satisfy $\sum_id_i=0$

(ii) $\rho_1,\dots,\rho_n>0$

(iii) $(B_{ij})\in\mathbb{R}^{n\times n}$ is a symmetric matrix with $B_{ij}\ge0$ for $i,j=1,\dots,n$ and

(iv) all solutions of the homogeneous system $\sum_{j\not=i}B_{ij}(u_i-u_j)=0$, $i=1,\dots,n$ lie in the space spanned by $(1,\dots,1)\in\mathbb{R}^n$, 
\newline
has the unique solution \begin{align} \label{cee-eq20}
    \rho_iu_i = -\sum_{j,k=1}^{n-1}(\delta_{ij}\rho_i & - \frac{\rho_i\rho_j}{\rho})\tau_{jk}^{-1}d_k,~~i=1,\dots,n-1 \\
    \rho_nu_n & = -\sum_{j=1}^{n-1}\rho_ju_j \nonumber
\end{align} where $(\tau_{ij}^{-1})\in\mathbb{R}^{(n-1)\times(n-1)}$ is the inverse of a regular submatrix, obtained by reordering the matrix $(\tau_{ij})\in\mathbb{R}^{n\times n}$ of rank $n-1$ with coefficients \[ \tau_{ij}=\delta_{ij}\sum_{k=1}^nB_{ik}-B_{ij} \, .
 \] 
 Our case is simpler since the coefficients $b_{ij}$ are symmetric and strictly positive. We also require that $\theta>0$, which means that hypotheses (iii) and (iv) are automatically satisfied for $\rho_i>0$, $i=1,\dots,n$.
 
Next,  introduce the Hilbert expansion \begin{equation} \label{cee-eq5}
    \rho_i=\rho_i^0+\epsilon\rho_i^1+\epsilon^2\rho_i^2+\mathcal{O}(\epsilon^3)
\end{equation} \begin{equation} \label{cee-eq6}
    u_i=u_i^0+\epsilon u_i^1+\epsilon^2u_i^2+\mathcal{O}(\epsilon^3)
\end{equation}\begin{equation} \label{cee-eq7}
    v=v^0+\epsilon v^1+\mathcal{O}(\epsilon^2)
\end{equation}\begin{equation} \label{cee-eq8}
    \theta=\theta^0+\epsilon\theta^1+\epsilon^2\theta^2+\mathcal{O}(\epsilon^3)
\end{equation} Inserting this into $\rho=\sum_i\rho_i$ and the constraint \eqref{cee-eq4} we obtain \begin{equation} \label{cee-eq9}
    \rho=\rho^0+\epsilon\rho^1+\mathcal{O}(\epsilon^2)
\end{equation} where we set $\rho^0:=\sum_i\rho_i^0$ and $\rho^1:=\sum_i\rho_i^1$ and \begin{equation} \label{cee-eq10}
    \sum_i\rho_i^0u_i^0+\epsilon\sum_i(\rho_i^1u_i^0+\rho_i^0u_i^1)+\mathcal{O}(\epsilon^2)=0
\end{equation} Equating terms of the same order gives \begin{equation} \label{cee-eq11}
    \sum_i\rho_i^0u_i^0=0~~\textnormal{and}~~\sum_i(\rho_i^1u_i^0+\rho_i^0u_i^1)=0
\end{equation} Next we insert the expansion \eqref{cee-eq5}-\eqref{cee-eq8} into system \eqref{cee-eq1}-\eqref{cee-eq3} and identify terms of the same order:

(i) The terms at the order $\mathcal{O}(1/\epsilon)$ give:
\begin{equation} \label{cee-eq12}
    -\theta^0\sum_{j\not=i}b_{ij}\rho_i^0\rho_j^0(u_i^0-u_j^0)=0
\end{equation} 
For the expansion we assume $\rho^0_i > 0$ and note that  \eqref{cee-eq12} is of the form \eqref{cee-eq19} with $d_i=0$ and $B_{ij}$ symmetric and strictly positive, which implies that the null-space of the homogeneous system is $\textnormal{span}\{(1,\dots,1)\}$. This, along with \eqref{cee-eq20}, gives $u_i^0=0$ for $i\in\{1,\dots,n\}$. This is incorporated in the remaining expansions. 

For the expansion of $\frac{1}{\theta}$ we use the Taylor series $\frac{1}{1+x}=1-x+x^2+\mathcal{O}(x^3)$ and thus 
\begin{equation} \label{eq-exp1theta}
    \frac{1}{\theta} = \frac{1}{\theta^0(1+\epsilon\frac{\theta^1}{\theta^0}+\epsilon^2\frac{\theta^2}{\theta^0}+\cdots)} 
    = \frac{1}{\theta^0}-\epsilon\frac{\theta^1}{(\theta^0)^2}+\epsilon^2\frac{(\theta^1)^2-\theta^2\theta^0}{(\theta^0)^3}+\mathcal{O}(\epsilon^3)
\end{equation}
For the expansions of the thermodynamic functions we use
\begin{equation}\label{expansione}
\begin{aligned}
e_i &= \hat e_i ( \rho_1^0+\epsilon\rho_1^1 +\mathcal{O}(\epsilon^2) , ... ,  \rho_n^0+\epsilon\rho_n^1+ \mathcal{O}(\epsilon^2)  ,  \theta^0+\epsilon\theta^1+\mathcal{O}(\epsilon^2))
\\
&= e_i^0 + \epsilon e_i^1 + \epsilon^2 e_i^2 + \mathcal{O}(\epsilon^3) 
\end{aligned}
\end{equation}
where $e_i^0, \, e_i^1 , \, e_i^2 , ...$ are computed by using the Taylor theorem, for instance
$$
\begin{aligned}
e_i^0 &= \hat e_i (\rho_1^0,  ..., \rho_n^0 , \theta^0) \, ,  \quad 
e_i^1 = \sum_{j=1}^n  \frac{\del \hat e_i}{\del \rho_j} (\rho_1^0, ..., \rho_n^0, \theta^0) \, \rho_j^1 +  \frac{\del \hat e_i}{\del \theta} (\rho_1^0, ..., \rho_n^0, \theta^0)  \, \theta^1  \, ,
\\
e_i^2 &= \sum_{j=1}^n  \frac{\del \hat e_i}{\del \rho_j} (\rho^0, \theta^0) \, \rho_j^2 +  \frac{\del \hat e_i}{\del \theta} (\rho^0, \theta^0)  \, \theta^2
             + \sum_{k,l=1}^n \frac{\del^2 \hat e_i}{\del \rho_k \del \rho_l} (\rho^0, \theta^0) \, \rho_k^1 \rho_l^1
             \\
        &\quad  + \sum_{k=1}^n \frac{\del^2 \hat e_i}{\del \rho_k \del \theta} (\rho^0, \theta^0) \, \rho_k^1 \theta^1
        + \frac{\del^2 \hat e_i}{\del \theta^2} (\rho^0, \theta^0) \, (\theta^1)^2
\end{aligned}
$$
and so on. A similar expansion is used for the entropy
\begin{equation}\label{expansioneta}
\begin{aligned}
\eta &= \hat \eta \left  (    \rho_1^0+\epsilon\rho_1^1 +\mathcal{O}(\epsilon^2) , ... ,  \rho_n^0+\epsilon\rho_n^1+ \mathcal{O}(\epsilon^2)  ,  \theta^0+\epsilon\theta^1+\mathcal{O}(\epsilon^2)  \right )
\\
&= \eta^0 + \epsilon \eta^1 + \epsilon^2 \eta^2 + \mathcal{O}(\epsilon^3) 
\end{aligned}
\end{equation}
as well as for the functions $e_i$, $p_i$ and $\kappa$ all of which only depend on $(\rho_1, ... , \rho_n)$ and $\theta$.
We then obtain:

(ii) Terms of order $\mathcal{O}(1)$: \begin{equation} \label{cee-eq21}
    \del_t\rho_i^0+\textnormal{div}(\rho_i^0v^0)=0
\end{equation} 
\begin{equation} \label{cee-eq22}
\begin{split}
    \del_t(\rho_i^0v^0)+\textnormal{div}(\rho_i^0v^0\otimes v^0)=\rho_i^0b_i^0-\rho_i^0\nabla\mu_i^0 & - \frac{1}{\theta^0}(\rho_i^0e_i^0+p_i^0-\rho_i^0\mu_i^0)\nabla\theta^0 \\
    & - \theta^0\sum_{j\not=i}b_{ij}\rho_i^0\rho_j^0(u_i^1-u_j^1)
\end{split}
\end{equation} 
\begin{equation} \label{cee-eq23}
\begin{split}
    \del_t\Big(\sum_i\rho_i^0e_i^0 & + \sum_i\frac{1}{2}\rho_i^0(v^0)^2\Big)+\textnormal{div}\left(\left(\sum_i\rho_i^0e_i^0+\sum_i\frac{1}{2}\rho_i^0(v^0)^2\right)v^0\right) \\
    & = -\textnormal{div}(p^0v^0)+\sum_i\rho_i^0r_i^0+\sum_i\rho_i^0b_i^0\cdot v^0+\textnormal{div}\left(\kappa^0\nabla\theta^0\right)
\end{split}
\end{equation} 

(iii) Terms of order $\mathcal{O}(\epsilon)$: \begin{equation} \label{cee-eq24}
    \del_t\rho_i^1+\textnormal{div}(\rho_i^0(u_i^1+v^1)+\rho_i^1v^0)=0
\end{equation} \begin{equation} \label{cee-eq25}
\begin{split}
    & \del_t(\rho_i^1v^0+\rho_i^0(u_i^1+v^1))+\textnormal{div}\Big(\rho_i^1v^0\otimes v^0+\rho_i^0(u_i^1+v^1)\otimes v^0+\rho_i^0v^0\otimes(u_i^1+v^1)\Big) \\
    & = \rho_i^1b_i^0+\rho_i^0b_i^1-\rho_i^1\nabla\mu_i^0-\rho_i^0\nabla\mu_i^1+\frac{\theta^1}{(\theta^0)^2}(\rho_i^0e_i^0+p_i^0-\rho_i^0\mu_i^0)\nabla\theta^0 \\
    & - \frac{1}{\theta^0}(\rho_i^1e_i^0+\rho_i^0e_i^1+p_i^1-\rho_i^1\mu_i^0-\rho_i^0\mu_i^1)\nabla\theta^0-\frac{1}{\theta^0}(\rho_i^0e_i^0+p_i^0-\rho_i^0\mu_i^0)\nabla\theta^1 \\
    & - \theta^0\sum_{j\not=i}b_{ij}\Big\{(\rho_i^1\rho_j^0+\rho_i^0\rho_j^1)(u_i^1-u_j^1)+\rho_i^0\rho_j^0(u_i^2-u_j^2)\Big\}-\theta^1\sum_{j\not=i}b_{ij}\rho_i^0\rho_j^0(u_i^1-u_j^1)
\end{split}
\end{equation} \begin{equation} \label{cee-eq26}
\begin{split}
    &\del_t\Big(\sum_i(\rho_i^0e_i^1+\rho_i^1e_i^0)+\sum_i\frac{1}{2}\rho_i^1(v^0)^2+\sum_i\frac{1}{2}\rho_i^02v^0(u_i^1+v^1)\Big)+\textnormal{div}\Big(\Big(\sum_i\rho_i^0e_i^0 \\
    & + \sum_i\frac{1}{2}\rho_i^0(v^0)^2\Big)v^1+\Big(\sum_i(\rho_i^0e_i^1+\rho_i^1e_i^0)+\sum_i\frac{1}{2}\rho_i^02v^0(u_i^1+v^1)+\sum_i\frac{1}{2}\rho_i^1(v^0)^2\Big)v^0\Big) \\
    & = -\textnormal{div}(p^0v^1+p^1v^0)+\sum_i(\rho_i^0b_i^0\cdot v^1+(\rho_i^1b_i^0+\rho_i^0b_i^1)\cdot v^0)+\sum_i\rho_i^0b_i^0\cdot u_i^1+\sum_i(\rho_i^0r_i^1+\rho_i^1r_i^0) \\
    & + \textnormal{div}\left(\kappa^0\nabla\theta^1+\kappa^1\nabla\theta^0-\sum_i\left\{(\rho_i^0e_i^0+p_i^0)+\frac{1}{2}\rho_i^0(v^0)^2\right\}u_i^1\right)
\end{split}
\end{equation}  Summing \eqref{cee-eq22} over $i$ and using the symmetry of $b_{ij}$ we obtain \begin{equation} \label{cee-eq27}
    \del_t(\sum_i\rho_i^0v^0)+\textnormal{div}(\sum_i\rho_i^0v^0\otimes v^0)=\sum_i\rho_i^0b_i^0-\nabla p^0
\end{equation} The reason behind the simplification of the right-hand side comes from the thermodynamics relations of the problem. More precisely, we have: \begin{equation} \label{cee-eq28}
    \begin{split}
    \sum_i\left(\rho_ib_i-\rho_i\nabla\mu_i-\frac{1}{\theta}(\rho_ie_i+p_i-\rho_i\mu_i)\nabla\theta\right) 
    & = \rho b-\sum_i\rho_i\nabla\mu_i-\frac{1}{\theta}(\rho e+p-\sum_i\rho_i\mu_i)\nabla\theta \\
    & \overset{\eqref{intro-gd}}{=} \rho b-\sum_i\rho_i\nabla\mu_i-\rho\eta\nabla\theta \\
    & = \rho b+\sum_i\mu_i\nabla\rho_i-\nabla\left(\sum_i\rho_i\mu_i\right)-\rho\eta\nabla\theta \\
    & \overset{\eqref{intro-const3}-\eqref{intro-gd}}{=} \rho b-\nabla p
\end{split}
\end{equation} For more details on the thermodynamic relations used in the calculation above, we refer to Appendices \ref{appA} and \ref{appB}. 
Expanding both sides of \eqref{cee-eq28}, we get: 
\begin{equation} \label{cee-eq29}
    \sum_i\left(\rho_i^0b_i^0-\rho_i^0\nabla\mu_i^0-\frac{1}{\theta^0}(\rho_i^0e_i^0+p_i^0-\rho_i^0\mu_i^0)\nabla\theta^0\right)=\sum_i\rho_i^0b_i^0-\nabla p^0
\end{equation} and \begin{equation} \label{cee-eq30}
\begin{split}
    \sum_i\Big(\rho_i^1b_i^0 & + \rho_i^0b_i^1-\rho_i^1\nabla\mu_i^0-\rho_i^0\nabla\mu_i^1+\frac{\theta^1}{(\theta^0)^2}(\rho_i^0e_i^0+p_i^0-\rho_i^0\mu_i^0)\nabla\theta^0-\frac{1}{\theta^0}(\rho_i^1e_i^0+\rho_i^0e_i^1+p_i^1 \\
    & - \rho_i^1\mu_i^0-\rho_i^0\mu_i^1)\nabla\theta^0-\frac{1}{\theta^0}(\rho_i^0e_i^0+p_i^0-\rho_i^0\mu_i^0)\nabla\theta^1\Big)=\sum_i(\rho_i^1b_i^0+\rho_i^0b_i^1)-\nabla p^1
\end{split}
\end{equation} where $p^0=\sum_ip_i^0$ and $p^1=\sum_ip_i^1$.

Equation \eqref{cee-eq27} along with \eqref{cee-eq21} and \eqref{cee-eq23} provide a closed system for determining $(\rho_1^0,\dots,\rho_n^0,v^0,\theta^0)$. Now, by \eqref{cee-eq22} follows that $(u_1^1,\dots,u_n^1)$ satisfies the linear system \begin{equation} \label{cee-eq31}
    -\sum_{j\not=i}b_{ij}\theta^0\rho_i^0\rho_j^0(u_i^1-u_j^1)=d_i^0
\end{equation} where $d_i^0=\del_t(\rho_i^0v^0)+\textnormal{div}(\rho_i^0v^0\otimes v^0)-\rho_i^0b_i^0+\rho_i^0\nabla\mu_i^0+\frac{1}{\theta^0}(\rho_i^0e_i^0+p_i^0-\rho_i^0\mu_i^0)\nabla\theta^0$. Since $u_i^0=0$ the first constraint of \eqref{cee-eq11} is satisfied trivially and the second one becomes \begin{equation} \label{cee-constr}
    \sum_i\rho_i^0u_i^1=0.
\end{equation} Moreover, due to \eqref{cee-eq27} and \eqref{cee-eq29} we see that $\sum_id_i^0=0$. This guarantees the solvability of system \eqref{cee-eq31}, since assumptions (i)-(iv) are satisfied according to the analysis of system \eqref{cee-eq19}. Hence, there exists a unique solution $(u_1^1,\dots,u_n^1)$ to system \eqref{cee-eq31}.

Similarly, summing \eqref{cee-eq25} over $i$, using the symmetry of $b_{ij}$, the identity \eqref{cee-eq30} and the constraint \eqref{cee-constr} we get \begin{equation} \label{cee-eq32}
    \del_t(\sum_i\rho_i^1v^0+\rho^0v^1)+\textnormal{div}(\sum_i\rho_i^1v^0\otimes v^0+\rho^0v^1\otimes v^0+\rho^0v^0\otimes v^1)=\sum_i(\rho_i^1b_i^0+\rho_i^0b_i^1)-\nabla p^1
\end{equation} which along with \eqref{cee-eq24} and \eqref{cee-eq26} provide a closed system for determining $(\{\rho_i^1\}_{i=1}^n,v^1,\theta^1)$. Note that after using the constraint \eqref{cee-constr}, \eqref{cee-eq26} reads \[ \begin{split}
    \del_t\Big(\sum_i & ( \rho_i^0e_i^1+\rho_i^1e_i^0)+\sum_i\frac{1}{2}\rho_i^1(v^0)^2+\rho^0v^0v^1\Big)+\textnormal{div}\Big(\Big(\sum_i\rho_i^0e_i^0+\sum_i\frac{1}{2}\rho_i^0(v^0)^2\Big)v^1 \\
    & + \Big(\sum_i(\rho_i^0e_i^1+\rho_i^1e_i^0)+\rho^0v^0v^1+\sum_i\frac{1}{2}\rho_i^1(v^0)^2\Big)v^0\Big)=-\textnormal{div}(p^0v^1+p^1v^0) \\
    & + \sum_i(\rho_i^0b_i^0\cdot v^1+(\rho_i^1b_i^0+\rho_i^0b_i^1)\cdot v^0)+\sum_i\rho_i^0b_i^0\cdot u_i^1+\sum_i(\rho_i^0r_i^1+\rho_i^1r_i^0) \\
    & + \textnormal{div}\left(\kappa^0\nabla\theta^1+\kappa^1\nabla\theta^0-\sum_i(\rho_i^0e_i^0+p_i^0)u_i^1\right)
\end{split} \] 

\subsection{Reconstruction of the effective equations}\label{sec2.2}

Next, we reconstruct the effective equations that are valid asymptotically up to order $\mathcal{O}(\epsilon^2)$. We add back \eqref{cee-eq21} plus $\epsilon$ times \eqref{cee-eq24}, \eqref{cee-eq27} plus $\epsilon$ times \eqref{cee-eq32} and \eqref{cee-eq23} plus $\epsilon$ times \eqref{cee-eq26} to obtain \begin{equation} \label{cee-eq33}
    \del_t(\rho_i^0+\epsilon\rho_i^1)+\textnormal{div}(\rho_i^0v^0+\epsilon(\rho_i^0v^1+\rho_i^1v^0))=-\epsilon\textnormal{div}(\rho_i^0u_i^1)
\end{equation} \begin{equation} \label{cee-eq34}
    \begin{split}
        \del_t\Big(\rho^0v^0 & + \epsilon(\rho^1v^0+\rho^0v^1)\Big)+\textnormal{div}\left(\rho^0v^0+\epsilon(\rho^1v^0\otimes v^0+\rho^0v^1\otimes v^0+\rho^0v^0\otimes v^1)\right) \\
        & = \sum_i\{\rho_i^0b_i^0+\epsilon(\rho_i^1b_i^0+\rho_i^0b_i^1)\}-(\nabla p^0+\epsilon\nabla p^1)
    \end{split}
\end{equation} \begin{equation} \label{cee-eq35}
\begin{split}
    &\del_t\Big(\sum_i\rho_i^0e_i^0+\epsilon\sum_i(\rho_i^0e_i^1+\rho_i^1e_i^0)+\sum_i\frac{1}{2}(\rho_i^0+\epsilon\rho_i^1)(v^0)^2)+\epsilon\sum_i\frac{1}{2}\rho_i^02v^0v^1\Big) \\
    & + \textnormal{div}\Big(\sum_i\rho_i^0e_i^0(v^0+\epsilon v^1)+\epsilon\sum_i(\rho_i^1e_i^0+\rho_i^0e_i^1)v^0+\sum_i\frac{1}{2}\rho_i^0(v^0)^2(v^0+\epsilon v^1) \\
    & + \epsilon\sum_i\frac{1}{2}\rho_i^1(v^0)^2v^0)+\epsilon\sum_i\frac{1}{2}\rho_i^02v^0(u_i^1+v^1)v^0\Big)=-\textnormal{div}(p^0v^0+\epsilon(p^1v^0+p^0v^1) \\
    & + \sum_i(\rho_i^0r_i^0+\epsilon(\rho_i^0r_i^1+\rho_i^1r_i^0))+\sum_i\Big(\rho_i^0b_i^0\cdot v^0+\epsilon(\rho_i^0b_i^0\cdot v^1+(\rho_i^0b_i^1+\rho_i^1b_i^0)\cdot v^0)\Big) \\
    & + \epsilon\sum_i\rho_i^0b_i^0\cdot u_i^1+\textnormal{div}\left((\kappa^0\nabla\theta^0)+\epsilon(\kappa^1\nabla\theta^0+\kappa^1\nabla\theta^0)-\epsilon\sum_i(\rho_i^0e_i^0+p_i^0)u_i^1\right)
\end{split}
\end{equation} 

Now set
\begin{equation} \label{cee-eq36}
    \rho_i^\epsilon=\rho_i^0+\epsilon\rho_i^1+\mathcal{O}(\epsilon^2) \, , \qquad 
    \rho^\epsilon=\sum_i\rho_i^\epsilon
\end{equation} \begin{equation} \label{cee-eq37}
    u_i^\epsilon=u_i^0+\epsilon u_i^1+\mathcal{O}(\epsilon^2)
\end{equation} \begin{equation} \label{cee-eq39}
    v^\epsilon=v^0+\epsilon v^1+\mathcal{O}(\epsilon^2)
\end{equation}\begin{equation} \label{cee-eq40}
    \theta^\epsilon=\theta^0+\epsilon \theta^1+\mathcal{O}(\epsilon^2)
\end{equation} 

We also have the expansions obtained via \eqref{expansione} and reading
\begin{equation}\label{expansioneta2}
\begin{aligned}
e_i^\epsilon &= e_i^0 + \epsilon e_i^1 + \epsilon^2 e_i^2 + \mathcal{O}(\epsilon^3) 
\\
p_i^\epsilon &=  p_i^0 + \epsilon p_i^1 + \epsilon^2 p_i^2 + \mathcal{O}(\epsilon^3) 
\\
\kappa^\epsilon &= \kappa^0 + \epsilon \kappa^1 + \epsilon^2 \kappa^2 + \mathcal{O}(\epsilon^3) 
\end{aligned}
\end{equation}

Then equations \eqref{cee-eq33}-\eqref{cee-eq35} read \begin{equation} \label{cee-eq41}
    \del_t\rho_i^\epsilon+\textnormal{div}(\rho_i^\epsilon v^\epsilon)=-\textnormal{div}(\rho_i^\epsilon u_i^\epsilon)+\mathcal{O}(\epsilon^2)
\end{equation} \begin{equation} \label{cee-eq42}
    \del_t(\rho^\epsilon v^\epsilon)+\textnormal{div}(\rho^\epsilon v^\epsilon\otimes v^\epsilon)=\rho^\epsilon b^\epsilon-\nabla p^\epsilon+\mathcal{O}(\epsilon^2)
\end{equation} \begin{equation} \label{cee-eq43}
\begin{split}
    \del_t\left(\rho^\epsilon e^\epsilon+\frac{1}{2}\rho^\epsilon (v^\epsilon)^2\right) & + \textnormal{div}\left(\left(\rho^\epsilon e^\epsilon+\frac{1}{2}\rho^\epsilon (v^\epsilon)^2\right)v^\epsilon\right)=-\textnormal{div}(p^\epsilon v^\epsilon)+\rho^\epsilon r^\epsilon+\rho^\epsilon b^\epsilon\cdot v^\epsilon \\
    & + \sum_i\rho_i^\epsilon b_i^\epsilon\cdot u_i^\epsilon+\textnormal{div}\left(\kappa^\epsilon\nabla\theta^\epsilon-\sum_i(\rho_i^\epsilon e_i^\epsilon+p_i^\epsilon)u_i^\epsilon\right)+\mathcal{O}(\epsilon^2)
\end{split}
\end{equation} 

Finally, we need to construct the formulas determining $(u_i^\epsilon)_i$. Using \eqref{cee-eq31} we deduce that \begin{equation} \label{cee-eq44}
\begin{split}
    -\sum_{j\not=i}b_{ij}\theta^\epsilon\rho_i^\epsilon\rho_j^\epsilon(u_i^\epsilon-u_j^\epsilon) & = -\epsilon\sum_{j\not=i}b_{ij}\theta^0\rho_i^0\rho_j^0(u_i^1-u_j^1)+\mathcal{O}(\epsilon^2) \\
    & = \epsilon d_i^0+\mathcal{O}(\epsilon^2)
\end{split}
\end{equation} where from \eqref{cee-eq21}, summing \eqref{cee-eq21} over $i$ and \eqref{cee-eq27}: \begin{equation} \label{cee-eq45}
    \begin{split}
        d_i^0 & = \del_t(\rho_i^0v^0)+\textnormal{div}(\rho_i^0v^0\otimes v^0)-\rho_i^0b_i^0+\rho_i^0\nabla\mu_i^0+\frac{1}{\theta^0}(\rho_i^0e_i^0+p_i^0-\rho_i^0\mu_i^0)\nabla\theta^0 \\
        & = \rho_i^0\left(\del_tv^0+v^0\cdot\nabla v^0\right)-\rho_i^0b_i^0+\rho_i^0\nabla\mu_i^0+\frac{1}{\theta^0}(\rho_i^0e_i^0+p_i^0-\rho_i^0\mu_i^0)\nabla\theta^0 \\
        & = \frac{\rho_i^0}{\rho^0}\left(\del_t(\rho^0v^0)+\textnormal{div}(\rho^0v^0\otimes v^0)\right)-\rho_i^0b_i^0+\rho_i^0\nabla\mu_i^0+\frac{1}{\theta^0}(\rho_i^0e_i^0+p_i^0-\rho_i^0\mu_i^0)\nabla\theta^0 \\
        & = \frac{\rho_i^0}{\rho^0}\sum_j(\rho_j^0b_j^0-\nabla p_j^0)-\rho_i^0b_i^0+\rho_i^0\nabla\mu_i^0+\frac{1}{\theta^0}(\rho_i^0e_i^0+p_i^0-\rho_i^0\mu_i^0)\nabla\theta^0
    \end{split}
\end{equation} This motivates to define \begin{equation} \label{cee-eq46}
    d_i^\epsilon:=\frac{\rho_i^\epsilon}{\rho^\epsilon}\sum_j(\rho_j^\epsilon b_j^\epsilon-\nabla p_j^\epsilon)-\rho_i^\epsilon b_i^\epsilon+\rho_i^\epsilon\nabla\mu_i^\epsilon+\frac{1}{\theta^\epsilon}(\rho_i^\epsilon e_i^\epsilon+p_i^\epsilon-\rho_i^\epsilon\mu_i^\epsilon)\nabla\theta^\epsilon
\end{equation} which by \eqref{cee-eq28}-\eqref{cee-eq30} sums up to zero and thus the linear system for determining $\{u_i^\epsilon\}_{i=1}^n$ is \begin{equation} \label{cee-eq47}
    -\sum_{j\not=i}b_{ij}\theta^\epsilon\rho_i^\epsilon\rho_j^\epsilon(u_i^\epsilon-u_j^\epsilon)=\epsilon d_i^\epsilon+\mathcal{O}(\epsilon^2)
\end{equation} and is solvable according to the analysis of \eqref{cee-eq19}. Moreover, the constraint becomes \begin{equation} \label{cee-eq48}
    \sum_i\rho_i^\epsilon u_i^\epsilon=\mathcal{O}(\epsilon^2)
\end{equation} which means that since the variables $\rho_i$ and $u_i$ are defined up to order $\mathcal{O}(\epsilon^2)$ we can set $\sum_i\rho_i^\epsilon u_i^\epsilon=0$ up to that order.

Therefore, omitting the $\epsilon$-notation and the higher-order terms, we conclude that system \eqref{intro-eq1}-\eqref{intro-gd} is approximated within $\mathcal{O}(\epsilon^2)$ by the system: \begin{equation} \label{cee-eq49}
    \del_t\rho_i+\textnormal{div}(\rho_iv)=-\textnormal{div}(\rho_iu_i)
\end{equation} \begin{equation} \label{cee-eq50}
    \del_t(\rho v)+\textnormal{div}(\rho v\otimes v)=\rho b-\nabla p
\end{equation}
 \begin{equation} \label{cee-eq51}
\begin{split}
    \del_t\left(\rho e+\frac{1}{2}\rho v^2\right)+\textnormal{div}\left((\rho e+\frac{1}{2}\rho v^2)v\right) & = -\textnormal{div}(pv)+\rho r+\rho b\cdot v+\sum_i\rho_ib_i\cdot u_i \\
    & + \textnormal{div}\left(\kappa\nabla\theta-\sum_i(\rho_ie_i+p_i)u_i\right)
\end{split}
\end{equation}  where $u_i$ are determined by solving the constrained linear system: 
\begin{equation} \label{cee-eq52}
\begin{aligned}
    -\sum_{j\not=i}b_{ij}\theta\rho_i\rho_j(u_i-u_j)&=\epsilon\left(\frac{\rho_i}{\rho}(-\nabla p+\rho b)+\rho_i\theta\nabla\frac{\mu_i}{\theta}-\theta(\rho_ie_i+p_i)\nabla\frac{1}{\theta}-\rho_ib_i\right)
\\
    \sum_i\rho_iu_i &=0
\end{aligned} 
\end{equation}
Our system \eqref{cee-eq49}-\eqref{cee-eq52} agrees with the Type-I model obtained in \cite{BDre} using the entropy invariant method, in the case of an inviscid, non-reactive mixture with zero thermal diffusivities \cite[Section 8]{BDre}. We note here that the present work concerns only the derivation of the corresponding Type-I model in the formal level. For a validation of the expansion and a proof of the convergence, however valid  in the isothermal case, we refer to \cite{BGP,HJT}.

\subsection{Asymptotic expansion of the entropy equation}\label{sec2.3}

As explained in  Appendix \ref{appC} or in \cite[Section 5]{BDre}, given the Type-II system \eqref{intro-eq1}-\eqref{intro-eq3}, one derives the entropy equation: 
\begin{equation}\label{eq-entropytypeII}
\begin{split}
    \del_t(\rho\eta)+\textnormal{div}(\rho\eta v) & = \textnormal{div}\left(\frac{\kappa\nabla\theta-\sum_i(\rho_ie_i+p_i-\rho_i\mu_i)u_i}{\theta}\right)+\frac{1}{\theta^2}\kappa|\nabla\theta|^2 \\
    & + \frac{1}{2\epsilon}\sum_i\sum_jb_{ij}\rho_i\rho_j(u_i-u_j)^2+\frac{\rho r}{\theta}
\end{split}
\end{equation} There are two approaches for deriving the entropy equation of the asymptotic limit system  \eqref{cee-eq49}-\eqref{cee-eq51}: (i) to derive it by using the system and the constitutive relations as was done in Appendix \ref{appC} to obtain \eqref{eq-entropytypeII} and  (ii) to expand the entropy equation of the Type-II model and obtain its $\epsilon^2-$approximation the same way we obtained system \eqref{cee-eq49}-\eqref{cee-eq51}. Here we present the latter way, but it is easy to verify that the two results coincide.

As in subsection \ref{sec2.1}, we introduce the same Hilbert expansion, insert it into \eqref{eq-entropytypeII} and identify terms of the same order:

(i) Terms of order $\mathcal{O}(1/\epsilon)$: \begin{equation} \label{cee-eq55} \frac{1}{2}\sum_{i,j}b_{ij}\rho_i^0\rho_j^0(u_i^0-u_j^0)^2=0 \end{equation} 
which is consistent with  $u_i^0=0$ in subsection \ref{sec2.1}.

Next, using  $u_i^0=0$,  the expansion \eqref{eq-exp1theta} for  $\frac{1}{\theta}$  and 
\[ \frac{1}{\theta^2}=\frac{1}{(\theta^0)^2}-\epsilon\frac{2\theta^1}{(\theta^0)^3}+\epsilon^2\frac{3(\theta^1)^2-\theta^2\theta^0}{(\theta^0)^4}+\mathcal{O}(\epsilon^3) \, , 
\] 
as well as the expansions \eqref{expansioneta}, \eqref{expansioneta2}
we obtain:

(ii) at order $\mathcal{O}(1)$: 
\begin{equation} \label{cee-eq58}
    \begin{split}
    \del_t(\rho^0\eta^0) & + \textnormal{div}(\rho^0\eta^0v^0)=\textnormal{div}\left(\frac{1}{\theta^0}\kappa^0\nabla\theta^0\right)+\frac{1}{(\theta^0)^2}\kappa^0(\nabla\theta^0)^2+\frac{\rho^0r^0}{\theta^0}
    \end{split}
\end{equation} 
(iii) at order $\mathcal{O}(\epsilon)$: 
\begin{equation} \label{cee-eq59}
    \begin{split}
        & \del_t(\rho^1\eta^0+\rho^0\eta^1)+\textnormal{div}((\rho^1\eta^0+\rho^0\eta^1)v^0+\rho^0\eta^0v^1)=\textnormal{div}\Big(\frac{1}{\theta^0}(\kappa^1\nabla\theta^0+\kappa^0\nabla\theta^1) \\
        & -\frac{\theta^1}{(\theta^0)^2}\kappa^0\nabla\theta^0-\frac{1}{\theta^0}\sum_i(\rho_i^0e_i^0+p_i^0-\rho_i^0\mu_i^0)u_i^1\Big)-\frac{2\theta^1}{(\theta^0)^3}\kappa^0(\nabla\theta^0)^2+\frac{1}{(\theta^0)^2}\kappa^1(\nabla\theta^0)^2 \\
        & + \frac{1}{(\theta^0)^2}\kappa^02\nabla\theta^0\nabla\theta^1+\frac{1}{2}\sum_{i,j}b_{ij}\rho_i^0\rho_j^0(u_i^1-u_j^1)^2+\frac{\rho^1r^0+\rho^0r^1}{\theta^0}-\frac{\rho^0r^0\theta^1}{(\theta^0)^2}
    \end{split}
\end{equation} Moreover, using \eqref{cee-eq31} in the entropy expansion, the third last term of \eqref{cee-eq59} reads \[ \begin{split}
    \frac{1}{2}\sum_{i,j}b_{ij}\rho_i^0\rho_j^0(u_i^1-u_j^1)^2 
    & = \frac{1}{2}\sum_iu_i^1\sum_jb_{ij}\rho_i^0\rho_j^0(u_i^1-u_j^1)-\frac{1}{2}\sum_ju_j^1\sum_ib_{ij}\rho_i^0\rho_j^0(u_i^1-u_j^1) \\
    & = -\frac{1}{2\theta^0}\sum_iu_i^1d_i^0-\frac{1}{2\theta^0}\sum_ju_j^1d_j^0 \\
    & = -\frac{1}{\theta^0}\sum_iu_i^1\cdot d_i^0
\end{split} \] Thus, in the reconstruction, we add \eqref{cee-eq58} plus $\epsilon$ times \eqref{cee-eq59}, to obtain \begin{equation} \label{cee-eq60}
    \begin{split}
        & \del_t(\rho^0\eta^0+\epsilon(\rho^1\eta^0+\rho^0\eta^1))+\textnormal{div}(\rho^0\eta^0v^0+\epsilon((\rho^1\eta^0+\rho^0\eta^1)v^0+\rho^0\eta^0v^1)) \\
        & = \textnormal{div}\left[\frac{1}{\theta^0}\kappa^0\nabla\theta^0+\epsilon\left(\frac{1}{\theta^0}(\kappa^1\nabla\theta^0+\kappa^0\nabla\theta^1)-\frac{\theta^1}{(\theta^0)^2}\kappa^0\nabla\theta^0-\frac{1}{\theta^0}\sum_i(\rho_i^0e_i^0+p_i^0-\rho_i^0\mu_i^0)u_i^1\right)\right] \\
        & + \frac{1}{(\theta^0)^2}\kappa^0(\nabla\theta^0)^2+\epsilon\left(-\frac{2\theta^1}{(\theta^0)^3}\kappa^0(\nabla\theta^0)^2+\frac{1}{(\theta^0)^2}\kappa^1(\nabla\theta^0)^2+\frac{1}{(\theta^0)^2}\kappa^02\nabla\theta^0\nabla\theta^1\right) \\
        & -\frac{\epsilon}{\theta^0}\sum_iu_i^1\cdot d_i^0+\frac{\rho^0r^0}{\theta^0}+\epsilon\left(\frac{\rho^1r^0+\rho^0r^1}{\theta^0}-\frac{\rho^0r^0\theta^1}{(\theta^0)^2}\right)
    \end{split}
\end{equation} which in turn gives \begin{equation} \label{cee-eq61}
    \begin{split}
        \del_t(\rho^\epsilon\eta^\epsilon)+\textnormal{div}(\rho^\epsilon\eta^\epsilon v^\epsilon) & = \textnormal{div}\left(\kappa^\epsilon\frac{\nabla\theta^\epsilon}{\theta^\epsilon}-\sum_i\frac{\rho_i^\epsilon e_i^\epsilon+p_i^\epsilon-\rho_i^\epsilon\mu_i^\epsilon}{\theta^\epsilon}u_i^\epsilon\right)+\kappa^\epsilon\frac{(\nabla\theta^\epsilon)^2}{(\theta^\epsilon)^2} \\
        & +\frac{\rho^\epsilon r^\epsilon}{\theta^\epsilon}-\frac{1}{\theta^\epsilon}\sum_iu_i^\epsilon\cdot d_i^\epsilon+\mathcal{O}(\epsilon^2)
    \end{split}
\end{equation} 
Omitting the $O(\epsilon^2)$ correction, the entropy equation for the Type-I system \eqref{cee-eq49}-\eqref{cee-eq51} reads 
\begin{equation}\label{eq-entropytypeI}
    \begin{split}
        \del_t(\rho\eta)+\textnormal{div}(\rho\eta v) & = \textnormal{div}\left(\frac{1}{\theta}\kappa\nabla\theta-\frac{1}{\theta}\sum_i(\rho_i e_i+p_i-\rho_i\mu_i)u_i\right) \\
        & +\frac{\rho r}{\theta}+\frac{1}{\theta^2}\kappa|\nabla\theta|^2-\frac{1}{\theta}\sum_iu_i\cdot d_i \, .
    \end{split}
\end{equation}
We conclude that the order $O(\epsilon^2)$ expansion of the entropy dissipation structure \eqref{eq-entropytypeII} for the Type-II model yields the entropy 
dissipation structure \eqref{eq-entropytypeI} for the emerging Type-I model.



\section{Dissipative structure of the limiting hyperbolic-parabolic system}\label{sec3}

For simplicity we select the external fields $r_i$ and $b_i$ to be zero and proceed to study the mathematical structure of the Type-I model, which can be written as follows: 
\begin{align} \label{dis-eq1}
    \del_t\rho_i+\textnormal{div}(\rho_iv) &=\textnormal{div}(-\rho_iu_i)
\\
\label{dis-eq2}
    \del_t (\rho v)+\textnormal{div}(\rho v\otimes v+p\mathbb{I}) &=0
\\
\label{dis-eq3}
    \del_t\left(\rho e+\frac{1}{2}\rho v^2\right)+\textnormal{div}\left((\rho e+\frac{1}{2}\rho v^2)v+pv\right)&=\textnormal{div}\left(\kappa\nabla\theta-\sum_i(\rho_ie_i+p_i)u_i\right)
\end{align} 
where $u_i$ are determined by inverting the constrained linear system
\begin{align} 
\label{dis-eq4}
    -\sum_{j\not=i}b_{ij}\theta\rho_i\rho_j(u_i-u_j) &= \epsilon\left(-\frac{\rho_i}{\rho}\nabla p+\rho_i\theta\nabla\frac{\mu_i}{\theta}-\theta(\rho_ie_i+p_i)\nabla\frac{1}{\theta}\right)
\\
\label{dis-eq5}
    \sum_i\rho_iu_i &= 0 \, .
\end{align}
Given the constitutive choices outlined in section \ref{sec2} (see also Appendix \ref{appB}) smooth solutions of \eqref{dis-eq1}-\eqref{dis-eq5} satisfy the entropy identity
\begin{equation} \label{dis-eq6}
    \del_t(-\rho\eta)+\textnormal{div}(-\rho\eta v)=\textnormal{div}\left(\frac{1}{\theta}\sum_i(\rho_ie_i+p_i-\rho_i\mu_i)u_i-\frac{1}{\theta}\kappa\nabla\theta\right)-\frac{1}{\theta^2}\kappa|\nabla\theta|^2+\frac{1}{\theta}\sum_iu_i\cdot d_i
\end{equation}  
Notice we multiplied the entropy identity by minus one, because we want to study the mathematical entropy, which is defined as the negative thermodynamic entropy.

Next, setting  $U=[\rho_1,\dots,\rho_n,v,\theta]^\top$ with $v=(v_1,v_2,v_3)$, system \eqref{dis-eq1}-\eqref{dis-eq3} can be written in the form: \begin{equation} \label{dis-eq7}
    \del_t(A(U))+\textnormal{div}(F(U)) = \textnormal{div}(\varepsilon B(U)\nabla U)
\end{equation} where \begin{equation} \label{dis-eq8}
    A(U)=\begin{bmatrix}
\rho_1 \\
\vdots \\
\rho_n \\
\rho v_1 \\
\rho v_2 \\
\rho v_3 \\
\rho e+\frac{1}{2}\rho v^2
\end{bmatrix}, ~~ F(U)=\begin{bmatrix}
\rho_1v_1 & \rho_1v_2 & \rho_1v_3 \\
\vdots & \vdots & \vdots \\
\rho_nv_1 & \rho_nv_2 & \rho_nv_3 \\
\rho v_1^2+p & \rho v_1v_2 & \rho v_1v_3 \\
\rho v_1v_2 & \rho v_2^2+p & \rho v_2v_3 \\
\rho v_1v_3 & \rho v_2v_3 & \rho v_3^2+p \\
(\rho e+\frac{1}{2}\rho v^2+p)v_1 & (\rho e+\frac{1}{2}\rho v^2+p)v_2 & (\rho e+\frac{1}{2}\rho v^2+p)v_3
\end{bmatrix}
\end{equation}

 \[
    B^\kappa(U)=\begin{bmatrix}
0 & \cdots & 0 & 0 & 0 & 0 & 0  \\
\vdots & \ddots & \vdots & \vdots & \vdots & \vdots & \vdots \\
0 & \cdots & 0 & 0 & 0 & 0 & 0  \\
0 & \cdots & 0 & 0 & 0 & 0 & 0  \\
0 & \cdots & 0 & 0 & 0 & 0 & 0  \\
0 & \cdots & 0 & 0 & 0 & 0 & 0  \\
0 & \cdots & 0 & 0 & 0 & 0 & 1  \\
\end{bmatrix},  ~~ \nabla U=\begin{bmatrix}
\nabla\rho_1 \\
\vdots \\
\nabla\rho_n \\
\nabla v_1 \\
\nabla v_2 \\
\nabla v_3 \\
\nabla\theta
\end{bmatrix} \] 

\[ B^\epsilon(U)=\begin{bmatrix}
A_{11} & \cdots & A_{1n} & 0 & 0 & 0 & A_{1,n+4} \\
\vdots & \ddots & \vdots & 0 & 0 & 0 & \vdots \\
A_{n1} & \cdots & A_{nn} & 0 & 0 & 0 & A_{n,n+4} \\
0 & \cdots & 0 & 0 & 0 & 0 & 0 \\
0 & \cdots & 0 & 0 & 0 & 0 & 0 \\
0 & \cdots & 0 & 0 & 0 & 0 & 0 \\
\sum_{i=1}^n\frac{h_i}{\rho_i}A_{i1} & \cdots & \sum_{i=1}^n\frac{h_i}{\rho_i}A_{in} & 0 & 0 & 0 & \sum_{i=1}^n\frac{h_i}{\rho_i}A_{i,n+4}
\end{bmatrix} \]  where we denote $h_i:=\rho_ie_i+p_i$ and write \[ -\rho_iu_i=\sum_{j=1}^nA_{ij}\nabla\rho_j+A_{i,n+4}\nabla\theta, ~ \textnormal{for $i=1,\dots,n$} \] and in \eqref{dis-eq7} we set $\varepsilon=(\kappa,\epsilon)$ and $B(U)=(B^\kappa(U),B^\epsilon(U))$. We note that the coefficients $A_{ij}$ and $A_{i,n+4}$ are determined by solving system \eqref{dis-eq4}-\eqref{dis-eq5}, as explained later in this section. 

According to \cite{CT} system \eqref{dis-eq7} fits into the class of hyperbolic-parabolic systems (see also \cite{Kaw}) provided the following conditions hold: 

(i) $A$ is a $C^2$ and bijective map from its domain onto its range, with $\nabla A(U)$ non-singular for any $U$ in the domain of $A$

(ii) there is an entropy-entropy flux pair $(H(U), Q_i(U))$, $i=1,2,3$,  generated by a smooth (vector valued) multiplier $G(U)$, such that \[ \nabla H =G\cdot\nabla A  \quad \textnormal{and}  \quad \nabla Q_i =G\cdot\nabla F_i 
\quad \textnormal{for} ~ i=1,2,3 \, .  \] 

(iii) the matrix $\nabla^2 H - G\cdot\nabla^2A$ is symmetric and positive definite 

(iv) smooth solutions of  \eqref{dis-eq7} satisfy an additional conservation law (the so called entropy identity)
\begin{equation} \label{dis-entreq}
\del_t(H(U) )+\text{div}(Q(U) )=\text{div}(G(U)\varepsilon B(U)\nabla U)-\nabla U^\top\nabla G(U)^\top \varepsilon B(U)\nabla U 
\end{equation}
with the matrices $\nabla G^\top B$ inducing entropy dissipation, i.e. \[ \nabla U^\top\nabla G(U)^\top \varepsilon B(U)\nabla U\ge0 \, . \]

Let us check whether these conditions are satisfied in the case of system \eqref{dis-eq1}-\eqref{dis-eq5}. Indeed, setting
\begin{equation}\label{dis-entflux}
H(U) := - \rho \eta(\rho_1 , \dots , \rho_n, \theta) \, , 
\quad
Q_i (U) := - \rho \eta(\rho_1 , \dots , \rho_n , \theta )  v_i  \, , 
\end{equation}
we see that \eqref{dis-entreq} is precisely equation \eqref{dis-eq6}. 
Concerning (i), if $U=[\rho_1,\dots,\rho_n,v,\theta]^\top$ and $\tilde{U}=[\tilde{\rho}_1,\dots,\tilde{\rho}_n,\tilde{v},\tilde{\theta}]^\top$ we have \[ A(U)=A(\tilde{U})\Leftrightarrow \left\{\begin{array}{c}
     \rho_i=\tilde{\rho}_i \\
     v=\tilde{v} \\
     e(\rho_1,\dots,\rho_n,\theta)=e(\tilde{\rho}_1,\dots,\tilde{\rho}_n,\tilde{\theta})
\end{array}\right. \] where the last equality implies that $\theta=\tilde{\theta}$, provided the internal energy is an increasing function of the temperature thus one-to-one. This follows from 
assuming that $e_\theta\equiv c_v>0$, a property connected to the stability of equilibrium states (see \cite{Cal} for more). Hence, if $\psi\in C^3$ and $\rho>0$, $A$ is $C^2$ and bijective in its domain, with \begin{equation} \label{dis-eq10}
\begin{split}
    \textnormal{det}(\nabla A) & =\textnormal{det}\begin{bmatrix}
    1 & 0 & \dots & 0 & 0 & 0 & 0 & 0 \\
    0 & 1 & \dots & 0 & 0 & 0 & 0 & 0 \\
    \vdots & \vdots & \ddots & \vdots & \vdots & \vdots & \vdots & \vdots \\
    0 & 0 & \dots & 1 & 0 & 0 & 0 & 0 \\
    v_1 & v_1 & \dots & v_1 & \rho & 0 & 0 & 0 \\
    v_2 & v_2 & \dots & v_2 & 0 & \rho & 0 & 0 \\
    v_3 & v_3 & \dots & v_3 & 0 & 0 & \rho & 0 \\
    (\rho e)_{\rho_1}+\frac{1}{2}v^2 & (\rho e)_{\rho_2}+\frac{1}{2}v^2 & \dots & (\rho e)_{\rho_n}+\frac{1}{2}v^2 & \rho v_1 & \rho v_2 & \rho v_3 & \rho c_v 
\end{bmatrix} \\
& = \rho^4c_v>0
\end{split}
\end{equation} 

For (ii) using the notation $\rho\hat{\eta}=\rho\eta(\rho_1,\dots,\rho_n,\theta)$ and $\rho\tilde{\eta}=\rho\eta(\rho_1,\dots,\rho_n,\rho e)$ 
and the  thermodynamic relations from Appendix \ref{appA} and \ref{appB}
we compute the partial derivatives of the entropy \[ (\rho\hat{\eta})_{\rho_i}=(\rho\tilde{\eta})_{\rho e}(\rho\hat{e})_{\rho_i}+(\rho\tilde{\eta})_{\rho_i}=\frac{(\rho e)_{\rho_i}-\mu_i}{\theta} \] \[ (\rho\hat{\eta})_\theta=\left(\frac{\rho e-\rho\psi}{\theta}\right)_\theta=\frac{[(\rho e)_\theta-(\rho\psi)_\theta]\theta-(\rho e-\rho\psi)}{\theta^2}= \frac{\rho e_\theta}{\theta}
=\frac{\rho c_v}{\theta}. \] By virtue of the relation $\nabla(-\rho\eta)=G\cdot\nabla A$ this determines the multiplier $G$ to be 
\begin{equation} \label{dis-eq11}
    G(U)= \bigg ( G_j (U) \bigg )_{j =1}^{n+4} = \frac{1}{\theta}\begin{bmatrix}
 \mu_1-\frac{1}{2}v^2 \\
 \vdots \\
 \mu_n-\frac{1}{2}v^2 \\
 v_1 \\
 v_2 \\
 v_3 \\
 -1
\end{bmatrix}
\end{equation} while relations $\nabla(-\rho\eta v_i)=G\cdot\nabla F_i$ serve as a way of verifying that our calculations are correct; indeed, they hold for the $G$ we found, 
using the properties
\begin{equation} \label{dis-eq12}
    (\mu_i)_\theta=((\rho\psi)_{\rho_i})_\theta=((\rho\psi)_\theta)_{\rho_i}=(-\rho\hat{\eta})_{\rho_i}=\frac{\mu_i-(\rho e)_{\rho_i}}{\theta}
\end{equation}

Now, for (iii), note that $\displaystyle{ G\cdot\nabla^2A:=\sum_{j =1}^{n+4} G_j (U) \nabla^2A_j(U)}$.  One easily sees that $\nabla^2 A_j = 0$ for $j =1 , \dots , n$. Then a tedious
but straightforward calculation, using the thermodynamic relations  \eqref{intro-const1}-\eqref{intro-const4}  and
$$
\begin{aligned}
\frac{1}{\theta} (\rho e)_\theta  -  (\rho \eta)_\theta &= \frac{1}{\theta}  \big (  (\rho \psi )_\theta  + \rho \eta \big ) = 0
\\
\frac{1}{\theta} (\rho e)_{\theta \theta} -  (\rho \eta)_{\theta \theta} &= \frac{1}{\theta} (\rho \psi )_{\theta \theta}  + \frac{2}{\theta}  (\rho \eta)_\theta
= \frac{\rho}{\theta}  \eta_\theta  = \frac{\rho}{\theta^2} e_\theta \, ,
\end{aligned}
$$
implies
 \begin{equation} 
\label{dis-eq13}
    \nabla^2(-\rho\eta)-G\cdot\nabla^2A=\frac{1}{\theta}\begin{bmatrix}
    (\rho\psi)_{\rho_1\rho_1} & \cdots & (\rho\psi)_{\rho_1\rho_n} & 0 & 0 & 0 & 0 \\
    \vdots & \ddots & \vdots & \vdots & \vdots & \vdots & \vdots \\
    (\rho\psi)_{\rho_n\rho_1} & \cdots & (\rho\psi)_{\rho_n\rho_n} & 0 & 0 & 0 & 0 \\
    0 & \cdots & 0 & \rho & 0 & 0 & 0 \\
    0 & \cdots & 0 & 0 & \rho & 0 & 0 \\
    0 & \cdots & 0 & 0 & 0 & \rho & 0 \\
    0 & \cdots & 0 & 0 & 0 & 0 & \frac{1}{\theta}\rho c_v \\
\end{bmatrix}
\end{equation} 
Clearly,  for  $\psi\in C^3$,  $\rho, \theta >0$ and $c_v > 0$, the latter will be positive definite provided the matrix $(\rho\psi)_{\rho_i\rho_j}$ is positive definite. Indeed, for any $\xi\in\mathbb{R}^{n+4}\setminus\{(0,\dots,0)\}$ we see that \[ \xi^\top(\nabla^2(-\rho\eta)-G\cdot\nabla^2A)\xi=\frac{1}{\theta}\sum_{i=1}^n\sum_{j=1}^n(\rho\psi)_{\rho_i\rho_j}\xi_i\xi_j+\frac{\rho}{\theta}(\xi_{n+1}^2+\xi_{n+2}^2+\xi_{n+3}^2)+\frac{\rho c_v}{\theta^2}\xi_{n+4}^2>0 \] The assumption that $(\rho\psi)_{\rho_i\rho_j}$ is positive definite is a natural assumption in thermodynamics that is related to the convexity of the entropy (again we refer to \cite{Cal} regarding the stability of equilibrium states, or to \cite{BDre}).

For the last condition, we need to show that $\nabla U^\top\nabla G(U)^\top \varepsilon B(U)\nabla U \ge 0$, in other words, the entropy production is non-negative: \begin{equation} \label{dis-eq14}
    \zeta=\frac{1}{\theta^2}\kappa|\nabla\theta|^2-\frac{1}{\theta}\sum_iu_i\cdot d_i\ge0.
\end{equation} Clearly the first term is non-negative, since $\kappa\ge0$. For the second term we need to invert system \eqref{dis-eq4}, in order to determine $\rho_iu_i$. 

Various methods for the inversion of the Maxwell-Stefan system are available in the literature, see for example \cite{BDre,HJT} and \cite[Sec. 7.7]{GioMulti}. In the present work, we invert \eqref{dis-eq4} using the Bott-Duffin inverse, following the analysis of \cite{MaxS}. The advantage of this method is that it provides an explicit formula for the solution of the linear system \eqref{dis-eq4},  \eqref{dis-eq5}, which is helpful to estimate the dissipation.

The need to introduce the Bott-Duffin inverse arises from the fact that the desired inversion has to respect the constraint $\sum_i\rho_iu_i=0$, i.e. we invert within the appropriate subspace. More precisely, we consider the solution of the generic system \begin{equation} \label{dis-eq15}
    Mx+y=w, ~~ x\in L,~y\in L^\perp
\end{equation} where $M\in\mathbb{R}^{m\times m}$, for some $m\in\mathbb{N}$, is a matrix and $L\subset\mathbb{R}^m$ a subspace. Let $\mathbb{P}_L$ and $\mathbb{P}_{L^\perp}$ be the projection operators onto the subspaces $L$ and $L^\perp$, respectively. Then, the set of solutions of system \eqref{dis-eq15} is the same as the set of solutions of the system \begin{equation} \label{dis-eq16}
    (M\mathbb{P}_L+\mathbb{P}_{L^\perp})z=w
\end{equation} and $[x,y]^\top$ solves \eqref{dis-eq15} if and only if $x=\mathbb{P}_Lz$ and $y=\mathbb{P}_{L^\perp}z=w-M\mathbb{P}_Lz$. Now, if the matrix $M\mathbb{P}_L+\mathbb{P}_{L^\perp}$ is invertible, we define the Bott-Duffin inverse of $M$ with respect to $L$ by \begin{equation} \label{dis-eq17}
    M^{BD}=\mathbb{P}_L(M\mathbb{P}_L+\mathbb{P}_{L^\perp})^{-1}
\end{equation} so that the solution of \eqref{dis-eq15} is given by \begin{equation} \label{dis-eq18}
    x=M^{BD}d, ~~ y=w-Mx
\end{equation} In our context, if we introduce the molar fractions \begin{equation} \label{dis-eq19}
    c_i=\frac{\rho_i}{\rho}
\end{equation} the left-hand side of \eqref{dis-eq4} reads: \[ \begin{split}
    -\sum_{j\not=i}b_{ij}\theta\rho_i\rho_j(u_i-u_j) & = -\rho\theta\left(\sqrt{\rho_i}\sum_{j\not=i}b_{ij}c_j(\sqrt{\rho_i}u_i)-\sqrt{\rho_i}\sum_{j\not=i}b_{ij}\sqrt{c_i}\sqrt{c_j}(\sqrt{\rho_j}u_j)\right) \\
    & = -\rho\theta\sqrt{\rho_i}\sum_{j=1}^n\left(\sum_{k\not=i}c_kb_{ik}\delta_{ij}-\sqrt{c_ic_j}b_{ij}\right)\sqrt{\rho_j}u_j \\
    & = -\rho\theta\sqrt{\rho_i}\sum_{j=1}^nM_{ij}\sqrt{\rho_j}u_j
\end{split} \] where we introduce the matrix $M=(M_{ij})$ given by \begin{equation} \label{dis-eq20}
    M_{ij}=\left\{\begin{array}{cc}
        \sum_{k\not=i}c_kb_{ik} & i=j \\
        -\sqrt{c_ic_j}b_{ij} & i\not=j 
    \end{array}\right.
\end{equation} and we are interested in the constrained inversion $Mx=w$, $x\in L$, where $w_i=-\frac{\epsilon d_i}{\rho\theta\sqrt{\rho_i}}$, using $d_i$ from section \ref{sec2}, $x_i=\sqrt{\rho_i}u_i$ and $L=\{(y_1,\dots,y_n)\in\mathbb{R}^n:\sum_{i=1}^n\sqrt{\rho_i}y_i=0\}$. Moreover, the projection matrix $\mathbb{P}_L$ on $L$ is given by \begin{equation} \label{dis-eq21}
    (\mathbb{P}_L)_{ij}=\delta_{ij}-\frac{\sqrt{\rho_i\rho_j}}{\rho}
\end{equation} 

In \cite{MaxS} it was proven that the matrix $M$ from \eqref{dis-eq20} satisfies the relation \begin{equation} \label{dis-eq22}
    z^\top Mz\ge\mu|\mathbb{P}_Lz|^2~~\forall z\in\mathbb{R}^n ~~ \textnormal{where $\mu=\min_{i\not=j}b_{ij}$}
\end{equation} which in turn implies that the Bott-Duffin inverse of $M$, namely $M^{BD}$, is well-defined, symmetric and satisfies \begin{equation} \label{dis-eq23}
    z^\top M^{BD}z\ge\lambda|\mathbb{P}_Lz|^2~~\forall z\in\mathbb{R}^n ~~ \textnormal{where $\lambda=\left(2\sum_{i\not=j}(b_{ij}+1)\right)^{-1}$}
\end{equation} Therefore, system \eqref{dis-eq4} is written \begin{equation} \label{dis-eq24}
    \sum_{j=1}^nM_{ij}\sqrt{\rho_j}u_j=-\frac{\epsilon d_i}{\rho\theta\sqrt{\rho_i}}
\end{equation} and can be inverted  \begin{equation} \label{dis-eq25}
    \sqrt{\rho_i}u_i=-\sum_{j=1}^nM_{ij}^{BD}\frac{\epsilon d_j}{\rho\theta\sqrt{\rho_j}}
\end{equation} Plugging \eqref{dis-eq25} into the last term of \eqref{dis-eq14} we get \[ \begin{split}
    -\frac{1}{\theta}\sum_iu_i\cdot d_i & = -\frac{1}{\theta}\sum_i\sqrt{\rho_i}u_i\cdot \frac{d_i}{\sqrt{\rho_i}} \\
    & = \frac{\epsilon}{\rho\theta^2}\sum_i\sum_jM_{ij}^{BD}\frac{d_i}{\sqrt{\rho_i}}\frac{d_j}{\sqrt{\rho_j}} \\
    & \ge 0
\end{split} \] by \eqref{dis-eq23}. Hence, \eqref{dis-eq14} is satisfied, which means that the matrices $\nabla G^\top B$ induce entropy dissipation as we wanted, concluding that system \eqref{dis-eq1}-\eqref{dis-eq3} along with the linear system \eqref{dis-eq4} subject to the constraint \eqref{dis-eq5} is of hyperbolic-parabolic type. In fact, condition \eqref{dis-eq14} is the minimum framework inducing entropy dissipation along the evolution and at the same time allowing for degenerate diffusion matrices (see \cite{CT}). In section \ref{sec5} we shall show that our problem enjoys a stronger dissipative structure, which allows us to establish some convergence results.

A particular case of system \eqref{dis-eq1}-\eqref{dis-eq5} is the system without mass-diffusion and heat-conduction, obtained by setting $\kappa=\epsilon=0$: \begin{equation} \label{dis-eq27}
    \del_t\rho_i+\textnormal{div}(\rho_iv)=0
\end{equation} \begin{equation} \label{dis-eq28}
    \del_t(\rho v)+\textnormal{div}(\rho v\otimes v+p\mathbb{I})=0
\end{equation} \begin{equation} \label{dis-eq29}
    \del_t\left(\rho e+\frac{1}{2}\rho v^2\right)+\textnormal{div}\left((\rho e+\frac{1}{2}\rho v^2)v+pv\right)=0
\end{equation} equipped with the entropy identity \begin{equation} \label{dis-eq30}
    \del_t(-\rho\eta)+\textnormal{div}(-\rho\eta v)=0
\end{equation} We would like to show that \eqref{dis-eq27}-\eqref{dis-eq29} is hyperbolic under the assumptions $(\rho\psi)_{\rho_i\rho_j}$ is positive definite and $e_\theta>0$, as long as the total mass $\rho$ remains away from zero. To do so, one needs to rewrite system \eqref{dis-eq27}-\eqref{dis-eq29} in the form \[ \nabla A(U)\del_tU+\sum_{\alpha=1}^3\nabla F_\alpha(U)\del_{x_\alpha}U=0 \] where $A,F_\alpha,U$ are as in \eqref{dis-eq7}. Since $\nabla A$ is non-singular, we proceed to find the characteristic speeds of the system by solving, for any $N=(N_1,N_2,N_3)$ on the sphere, the eigenvalue problem \[ \left[\sum_{\alpha=1}^3\nabla F_\alpha(U) N_\alpha-\lambda(U,N)\nabla A(U)\right]r(U,N)=0 \] where $\lambda,r$ are the eigenvalues and eigenvectors respectively and \[ \begin{split} &\sum_{\alpha=1}^3\nabla F_\alpha(U) N_\alpha-\lambda(U,N)\nabla A(U) \\
& =\begin{bmatrix} v\cdot N-\lambda & \cdots & 0 & \rho_1N_1 & \rho_1N_2 & \rho_1N_3 & 0 \\
\vdots & \ddots & \vdots & \vdots & \vdots & \vdots & \vdots \\
0 & \cdots & v\cdot N-\lambda & \rho_nN_1 & \rho_nN_2 & \rho_nN_3 & 0 \\
p_{\rho_1}N_1 & \cdots & p_{\rho_n}N_1 & \rho(v\cdot N-\lambda) & 0 & 0 & N_1p_\theta \\
p_{\rho_1}N_2 & \cdots & p_{\rho_n}N_2 & 0 & \rho(v\cdot N-\lambda) & 0 & N_2p_\theta \\
p_{\rho_1}N_3 & \cdots & p_{\rho_n}N_3 & 0 & 0 & \rho(v\cdot N-\lambda) & N_3p_\theta \\
0 & \cdots & 0 & N_1\theta p_\theta & N_2\theta p_\theta & N_3\theta p_\theta & \rho c_v(v\cdot N-\lambda)
\end{bmatrix}
\end{split} \] 
By using the property \[ \textnormal{det}\begin{bmatrix} A & B \\
C & D \end{bmatrix}=\textnormal{det}(A)\textnormal{det}(D-CA^{-1}B) \] from \cite[Sec. 5]{JSi}, for any block of matrices $A,B,C,D$, we determine the characteristic equation: \[ \rho^4c_v(v\cdot N-\lambda)^{n+2}\left((v\cdot N-\lambda)^2-\frac{1}{\rho}\sum_i\rho_ip_{\rho_i}-\frac{\theta p_\theta^2}{c_v\rho^2}\right)=0 \] which yields the wave speeds \[ \lambda_1=\cdots=\lambda_{n+2}=v\cdot N,~\lambda_{n+3,n+4}=v\cdot N\pm\sqrt{\frac{1}{\rho}\sum_i\rho_ip_{\rho_i}+\frac{\theta p_\theta^2}{c_v\rho^2}} \] Therefore, system \eqref{dis-eq27}-\eqref{dis-eq29} is hyperbolic if all eigenvalues are real, which holds under the hypotheses $\rho,\theta>0$, $c_v>0$ and $(\rho\psi)_{i,j}>0$. The last hypothesis ensures that the term $\sum_i\rho_ip_{\rho_i}$ is positive, since by \eqref{appC-eq3} \[ \sum_i\rho_ip_{\rho_i}=\sum_{i,j}\rho_i\rho_j(\mu_i)_{\rho_j}>0 \] In fact, the eigenvectors corresponding to the repeated eigenvalues are given by the formula \[ \xi=(\xi_1,\xi_2,\dots,\xi_n,\xi_{n+1},\xi_{n+2},-\frac{N_1}{N_3}\xi_{n+1}-\frac{N_2}{N_3}\xi_{n+2},-\frac{1}{p_\theta}\sum_{i=1}^n\xi_ip_{\rho_i}),~\textnormal{for}~(\xi_1,\dots,\xi_{n+2})\in\mathbb{R}^{n+2} \] and thus the dimension of the eigenspace is $n+2$.

The reader should notice that hyperbolicity is not valid at $\rho =0$ (as $\det A$ vanishes) but strict hyperbolicity still holds when some of the $\rho_i$'s vanish provided $\rho \ne 0$.


\section{The zero-diffusion limit to multicomponent non-isothermal flows}\label{sec5}

As in section \ref{sec3}, we consider the hyperbolic-parabolic system
\begin{equation} \label{conv-eq1}
    \del_t\rho_i+\textnormal{div}(\rho_iv)=\textnormal{div}(-\rho_iu_i)
\end{equation} \begin{equation} \label{conv-eq2}
    \del_t(\rho v)+\textnormal{div}(\rho v\otimes v+p\mathbb{I})=0
\end{equation} \begin{equation} \label{conv-eq3}
    \del_t\left(\rho e+\frac{1}{2}\rho v^2\right)+\textnormal{div}\left((\rho e+\frac{1}{2}\rho v^2)v+pv\right)=\textnormal{div}\left(\kappa\nabla\theta-\sum_i(\rho_ie_i+p_i)u_i\right) \, ,
\end{equation} 
where $u_i$ are determined by solving
\begin{align} \label{conv-eq5}
    -\sum_{j\not=i}b_{ij}\theta\rho_i\rho_j(u_i-u_j) &=\epsilon\left(-\frac{\rho_i}{\rho}\nabla p+\rho_i\theta\nabla\frac{\mu_i}{\theta}-\theta(\rho_ie_i+p_i)\nabla\frac{1}{\theta}\right)
\\
\label{conv-eq6}
    \sum_i\rho_iu_i&=0 \, ,
\end{align} 
and which is endowed with the dissipation structure
\begin{align} \label{conv-eq4}
    \del_t(-\rho\eta)+\textnormal{div}(-\rho\eta v)  &+\frac{1}{\theta^2}\kappa|\nabla\theta|^2 - \frac{1}{\theta}\sum_iu_i\cdot d_i
\\
\nonumber
     &=\textnormal{div}\left(\frac{1}{\theta}\sum_i(\rho_ie_i+p_i-\rho_i\mu_i)u_i-\frac{1}{\theta}\kappa\nabla\theta\right) \, .
\end{align}

Moreover, consider the system obtained when neglecting the mass diffusive effects ($\epsilon = 0$) but including heat conduction:
 \begin{equation} \label{conv-eq7}
    \del_t\rho_i+\textnormal{div}(\rho_iv)=0
\end{equation} \begin{equation} \label{conv-eq8}
    \del_t(\rho v)+\textnormal{div}(\rho v\otimes v+p\mathbb{I})=0
\end{equation} \begin{equation} \label{conv-eq9}
    \del_t\left(\rho e+\frac{1}{2}\rho v^2\right)+\textnormal{div}\left((\rho e+\frac{1}{2}\rho v^2)v+pv\right)=\textnormal{div}\left(\kappa\nabla\theta\right)
\end{equation} 
endowed with the limiting dissipation structure
\begin{equation} \label{conv-eq10}
    \del_t(-\rho\eta)+\textnormal{div}(-\rho\eta v)=\textnormal{div}\left(-\frac{1}{\theta}\kappa\nabla\theta\right)-\frac{1}{\theta^2}\kappa|\nabla\theta|^2 \, .
\end{equation} 

Local existence results for smooth solutions are available for multicomponent systems  (see \cite{GM} and \cite[Ch. 8]{GioMulti}). For hyperbolic-parabolic systems like the ones above with general initial data one can expect existence of a unique smooth solution, which however can break down at finite time and the time of existence in general depends on the diffusion constants $\eps$ and $\kappa$. More precisely, under sufficient conditions on the initial data, the diffusion coefficients and the free energy function, if $Q_T=\Omega\times(0,T)$, where $\Omega$ is bounded and $T>0$, there exists a $\tau\in(0,T]$ such that the above problem possesses a unique solution \[ (\rho_1,\dots,\rho_n)\in W^1_p(Q_\tau;\mathbb{R}_+^n),~v\in W^{2,1}_p(Q_\tau;\mathbb{R}^3),~ \theta\in W^{2,1}(Q_\tau;\mathbb{R}_+) \] where the spaces above are defined as follows: \[ W^{2,1}_p(Q_T)=\{u\in L^p(Q_T):\del_t^\beta\del_x^\alpha u\in L^p(Q_T) ~ \textnormal{for all} ~ 0<2\beta+|\alpha|\le2\} \] \[ W^{1,0}_p(Q_T)=\{u\in L^p(Q_T):\del_x^\alpha u\in L^p(Q_T) ~ \textnormal{for all} ~ |\alpha|=1\} \] For more details we refer to \cite{D}.

In this section, we show that smooth solutions of \eqref{conv-eq1}-\eqref{conv-eq6} converge to solutions of \eqref{conv-eq7}-\eqref{conv-eq10} as $\epsilon$ tends to zero, so long as the solutions of the latter remain in the smooth regime and the theory developed should be understood as indicating conditions for convergence of thermomechanical theories in the smooth regime.

In order to show the convergence from \eqref{dis-eq7} as $\varepsilon \to 0$ we use the following ingredients: 

(a) The method of relative entropy introduced in \cite{Daf}, here employed in the form proposed in \cite{CT} :
\begin{equation} \label{conv-eq16}
        H(U|\bar{U}) = H(U)  - H (\bar{U})-G(\bar{U})\cdot(A(U)-A(\bar{U}))  \, .
 \end{equation} 
By \cite[Appendix A]{CT}, whenever conditions (i)-(iv) for \eqref{dis-entreq} hold, the relative entropy can be written as $H(U|\bar{U}) = \hat{H}(A(U)|A(\bar{U}))$, where 
$\hat{H} (V) $ is a strictly  convex function.
Hence,  $H(U|\bar{U})$ vanishes if and only if $A(U)=A(\bar{U})$ (and by (i) if $U=\bar{U}$) and it can 
serve to measure the distance between two solutions. 

(b) A second ingredient is the control of diffusion by dissipation (see \cite{CT},\cite[Sec 4.6]{Dafbook}), that is  a hypothesis 
 that there exist constants $\nu_1>0$ and $\nu_2>0$ such that 
 \begin{equation} \label{conv-eq11}
    \sum_{\alpha,\beta}\nabla G(U)\partial_\alpha UB_{\alpha\beta}^\kappa(U)\partial_\beta U\ge\nu_1\sum_\alpha\left|\sum_\beta B_{\alpha\beta}^\kappa(U)\partial_\beta U\right|^2
\end{equation} \begin{equation} \label{conv-eq12}
    \sum_{\alpha,\beta}\nabla G(U)\partial_\alpha UB_{\alpha\beta}^\epsilon(U)\partial_\beta U\ge\nu_2\sum_\alpha\left|\sum_\beta B_{\alpha\beta}^\epsilon(U)\partial_\beta U\right|^2
\end{equation} 

Next, we list hypotheses used on the thermodynamic functions. For the internal energy, when expressed in the form $\rho e = \rho \tilde e (\rho_1, \dots, \rho_n, \rho \eta)$, we require
\begin{align}\label{hypo1}
\frac{\del (\rho \tilde e)}{\del (\rho \eta)} > 0 \, , \quad \frac{\del^2 (\rho \tilde e)}{\del (\rho \eta)^2} > 0
\tag {H$_1$}
\\
\label{hypo2}
\nabla^2_{(\rho_1, \dots, \rho_n, \rho \eta)} ( \rho \tilde e ) > 0
\tag{H$_2$}
\end{align}
Hypothesis \eqref{hypo1} is natural in thermodynamics stating that the temperature $\theta > 0$ and ensuring convexity of the energy as a function of entropy. Hypothesis \eqref{hypo2} implies that the system \eqref{dis-eq27}-\eqref{dis-eq29} is hyperbolic and excludes various interesting models related to pressure laws of Van-der-Waals type.

Using \eqref{hypo1} one may invert the equation $\theta = \frac{\del (\rho \tilde e)}{\del (\rho \eta)}$ and define the inverse function
$\rho \eta =  (\rho \eta)^* (\rho_1, ... , \rho_n, \theta)$. This yields the Legendre transform \cite[Sec 5]{Cal}
\begin{equation}
\label{eqlegendre}
\rho \psi = \rho \tilde e (\rho_1, \dots , \rho_n, (\rho \eta)^* )  - \theta (\rho \eta)^* \quad \mbox{where} \quad \frac{\del (\rho \tilde e)}{\del (\rho \eta)} (\rho_1, \dots , \rho_n, (\rho \eta)^*) = \theta
\end{equation}
A computation shows that \eqref{hypo1},  \eqref{hypo2} imply
\begin{equation}
\label{convexitypsi}
 \nabla^2_{(\rho_1, \dots , \rho_n)} (\rho \psi) > 0 \, , \quad (\rho \psi)_{\theta \theta}  < 0  \, .
\end{equation}
The latter should be compared to \eqref{dis-eq13} and property (iii).

An alternative is to define the Legendre transform through the direct formula
\begin{equation}\label{legendre}
(\rho \psi )(\rho_1, \dots , \rho_n, \theta) = \inf_{0 < \rho \eta < \infty} \big  \{ \rho \tilde e(\rho_1, \dots , \rho_n, \rho \eta) - \theta \rho \eta  \big \}
\end{equation}
Under \eqref{hypo1} and 
\begin{equation}
\label{hypo3}
\lim_{\rho \eta \to 0} \frac{\del (\rho \tilde e)}{\del (\rho \eta)} = 0 \, , \quad \lim_{\rho \eta \to \infty}  \frac{\rho \tilde e }{\rho \eta}  = \infty \, 
\end{equation}
this problem has at most one solution computed via \eqref{legendre}. An advantage of this approach is that
the convexity conditions  \eqref{convexitypsi} follow directly from the minimization formula \eqref{legendre}. On the other hand to solve \eqref{legendre}
requires the assumption \eqref{hypo3}. This formulation and \eqref{hypo3} is consistent with the third law of thermodynamics (that the entropy vanishes at 
the state of zero-temperature, see \cite[Sec 1.10]{Cal}) but the  popular model of the ideal gas (with constant heat capacity) violates the third law and presents negative entropies. Nevertheless, the relation \eqref{eqlegendre} between internal energy $\rho e$ and Helmholtz free energy $\rho \psi$ is still valid.

The adaptation of the general framework to the system \eqref{conv-eq1}-\eqref{conv-eq6} requires some computations, and it is remarkable
that such a complicated system hides a simple structure. Let
$U=(\rho_1,\dots,\rho_n,v,\theta)^\top$ be a solution of \eqref{conv-eq1}-\eqref{conv-eq4} and  $\bar{U}=(\bar{\rho}_1,\dots,\bar{\rho}_n,\bar{v},\bar{\theta})^\top$ of \eqref{conv-eq7}-\eqref{conv-eq10}. Using  \eqref{dis-entflux}, \eqref{dis-eq11} and \eqref{intro-const1}-\eqref{intro-const4} we arrive at
\begin{equation}\label{relen}
\begin{aligned}
    H(U|\bar{U}) & =  - \rho \eta + \bar \rho \bar \eta 
                                - \frac{1}{\bar \theta}  \sum_{j=1}^n   (  \bar \mu_j -\tfrac{1}{2} \bar v^2  ) ( \rho_j - \bar \rho_j) - \frac{ \bar v}{\bar \theta} \cdot (\rho v - \bar \rho \bar v ) 
\\
                                &\qquad + \frac{1}{\bar \theta}\Big ( \rho e + \tfrac{1}{2} \rho v^2 - \bar \rho \bar e -  \tfrac{1}{2} \bar \rho \bar v^2  \Big )       
\\
    &= \frac{1}{2\bar \theta} \rho | v - \bar v|^2  + \frac{1}{\bar \theta}  \Big [ \Big (   \rho \psi - \bar \rho \bar \psi - \sum_{j=1}^n \bar \mu_j (\rho_j - \bar \rho_j) + \bar \rho \bar \eta (\theta - \bar \theta) \Big ) 
     + (\theta - \bar \theta) ( \rho \eta - \bar \rho \bar \eta) \Big ]
\\
    & = \frac{1}{\bar{\theta}}\left( \frac{1}{2}\rho|v-\bar{v}|^2  +  J( \rho_1, \dots, \rho_n, \theta | \bar \rho_1, \dots, \bar \rho_n, \bar \theta) \right)
\end{aligned}
\end{equation} where we set $\omega = (\rho_1, \dots, \rho_n ,\theta)$, $\bar \omega = (\bar \rho_1 , \dots, \bar \rho_n, \bar \theta)$ and use \eqref{intro-const2} to write
\begin{align}
 \label{lemma-eq1}
   J(\omega | \bar \omega) &:= \rho\psi-\bar{\rho}\bar{\psi}-\sum_i\bar{\mu}_i(\rho_i-\bar{\rho}_i)+\bar{\rho}\bar{\eta} (\theta-\bar{\theta}) 
    + (\theta - \bar \theta) ( \rho \eta - \bar \rho \bar \eta)
    \\
  \nonumber
    &= \rho e-\bar{\rho}\bar{e}-\sum_i\bar{\mu}_i(\rho_i-\bar{\rho}_i)-\bar{\theta}(\rho\eta-\bar{\rho}\bar{\eta})
\\
\label{relativequant}
   I(U|\bar{U}) &:= \bar \theta H(U|\bar{U}) = \frac{1}{2} \rho | v - \bar v|^2 + J(\omega | \bar \omega) \, .
\end{align} 
Due to hypothesis \eqref{hypo2} and \eqref{energy-const}, the quantity $J(\omega | \bar \omega)$ will serve as a measure of the distance between the states
$\omega$ and $\bar\omega$, in analogy to the situation in single component fluids  \cite{Daf}. 
This suggests to calculate  the evolution of the quantity \eqref{relativequant}.

Subtracting the entropy identities \eqref{conv-eq10} from \eqref{conv-eq4} and multiplying by $\bar{\theta}$, we obtain \begin{equation} \label{conv-eq13}
    \begin{split}
        &\del_t(-\bar{\theta}\rho\eta+\bar{\theta}\bar{\rho}\bar{\eta})+\textnormal{div}(-\rho\eta v\bar{\theta}+\bar{\rho}\bar{\eta}\bar{v}\bar{\theta})= \textnormal{div}\left(\frac{\bar{\theta}}{\theta}\sum_j(h_j-\rho_j\mu_j)u_j-\frac{\bar{\theta}}{\theta}\kappa\nabla\theta\right) \\
        & - \del_t\bar{\theta}(\rho\eta-\bar{\rho}\bar{\eta})+\nabla\bar{\theta}\cdot(-\rho\eta v+\bar{\rho}\bar{\eta}\bar{v})-\frac{1}{\theta}\sum_j(h_j-\rho_j\mu_j)u_j\cdot\nabla\bar{\theta}+\frac{1}{\theta}\kappa\nabla\theta\cdot\nabla\bar{\theta} \\
        & - \frac{\bar{\theta}}{\theta^2}\kappa|\nabla\theta|^2+\frac{\bar{\theta}}{\theta}\sum_ju_j\cdot d_j+\textnormal{div}\left(\frac{\bar{\theta}}{\bar{\theta}}\bar{\kappa}\nabla\bar{\theta}\right)-\frac{1}{\bar{\theta}}\bar{\kappa}\nabla\bar{\theta}\cdot\nabla\bar{\theta}+\frac{\bar{\theta}}{\bar{\theta}^2}\bar{\kappa}|\nabla\bar{\theta}|^2
    \end{split}
\end{equation} Likewise, subtracting system \eqref{conv-eq7}-\eqref{conv-eq9} from system \eqref{conv-eq1}-\eqref{conv-eq3} and multiplying the result by $-\bar{\theta}G(\bar{U})$, where $G$ is the multiplier from \eqref{dis-eq11}, we obtain: \begin{equation} \label{conv-eq14}
\begin{split}
    & \del_t\left(\sum_i\left(\frac{1}{2}\bar{v}^2-\bar{\mu}_i\right)(\rho_i-\bar{\rho}_i)-\bar{v}(\rho v-\bar{\rho}\bar{v})+\left(\rho e+\frac{1}{2}\rho v^2-\bar{\rho}\bar{e}-\frac{1}{2}\bar{\rho}\bar{v}^2\right)\right) \\
    & +\textnormal{div}\Big(\sum_i\left(\frac{1}{2}\bar{v}^2-\bar{\mu}_i\right)(\rho_iv-\bar{\rho}_i\bar{v})-\bar{v}(\rho v\otimes v-\bar{\rho}\bar{v}\otimes\bar{v}+(p-\bar{p})\mathbb{I}) \\
    & + \left(\rho e+\frac{1}{2}\rho v^2+p\right)v-\left(\bar{\rho}\bar{e}+\frac{1}{2}\bar{\rho}\bar{v}^2+\bar{p}\right)\bar{v}\Big) = \sum_i\del_t\left(\frac{1}{2}\bar{v}^2-\bar{\mu}_i\right)(\rho_i-\bar{\rho_i}) \\
    & + \sum_i\nabla\left(\frac{1}{2}\bar{v}^2-\bar{\mu}_i\right)\cdot(\rho_iv-\bar{\rho}_i\bar{v})-\sum_i\left(\frac{1}{2}\bar{v}^2-\bar{\mu}_i\right)\textnormal{div}(\rho_iu_i)-\del_t\bar{v}\cdot(\rho v-\bar{\rho}\bar{v}) \\
    & - \rho v\nabla\bar{v}\cdot v+\bar{\rho}\bar{v}\nabla\bar{v}\cdot\bar{v}-(p-\bar{p})\textnormal{div}\bar{v}+\textnormal{div}\left(\kappa\nabla\theta-\sum_jh_ju_j\right)-\textnormal{div}(\bar{\kappa}\nabla\bar{\theta})
\end{split}
\end{equation}
where we used the abbreviation $h_j := \rho_j e_j + p_j$.

Next, we add equations \eqref{conv-eq13} and \eqref{conv-eq14} and perform a series of calculations detailed in Appendix \ref{appD} to re-organize the terms in the
right hand side. The resulting relative entropy identity reads:
\begin{equation} \label{conv-eq15}
\begin{split}
    & \del_tI(U|\bar{U})+\textnormal{div}\Big[vI(U|\bar{U})+(p-\bar{p})(v-\bar{v})+\sum_j\rho_ju_j(\mu_j-\bar{\mu}_j) \\
    & - (\theta-\bar{\theta})\left(\frac{1}{\theta}\kappa\nabla\theta-\frac{1}{\bar{\theta}}\bar{\kappa}\nabla\bar{\theta}\right)+\frac{1}{\theta}(\theta-\bar{\theta})\sum_j(h_j-\rho_j\mu_j)u_j\Big]+\bar{\theta}\kappa\left(\frac{\nabla\theta}{\theta}-\frac{\nabla\bar{\theta}}{\bar{\theta}}\right)^2 \\
    & - \frac{\bar{\theta}}{\theta}\sum_ju_j\cdot d_j=(\del_t\bar{\theta}+\bar{v}\cdot\nabla\bar{\theta})(-\rho\eta)(\omega|\bar{\omega})-p(\omega|\bar{\omega})\textnormal{div}\bar{v}-(\eta-\bar{\eta})\rho(v-\bar{v})\cdot\nabla\bar{\theta} \\
    & - \sum_j\nabla\bar{\mu}_j\left(\frac{\rho_j}{\rho}-\frac{\bar{\rho}_j}{\bar{\rho}}\right)\rho(v-\bar{v})-\rho(v-\bar{v})\nabla\bar{v}\cdot(v-\bar{v})-\sum_j\nabla\bar{\mu}_j\cdot\rho_ju_j \\
    & - \left(\frac{\nabla\theta}{\theta}-\frac{\nabla\bar{\theta}}{\bar{\theta}}\right)\frac{\nabla\bar{\theta}}{\bar{\theta}}(\bar{\theta}\kappa-\theta\bar{\kappa})-\frac{1}{\bar{\theta}}\nabla\bar{\theta}\sum_j(h_j-\rho_j\mu_j)u_j
\end{split} 
\end{equation} where
 \begin{align*}
  p(\omega|\bar{\omega}) &=p-\bar{p}-\sum_j\bar{p}_{\rho_j}(\rho_j-\bar{\rho}_j)-\bar{p}_\theta(\theta-\bar{\theta})
  \\
  (-\rho\eta)(\omega|\bar{\omega}) &=-\rho\eta+\bar{\rho}\bar{\eta}+\sum_j(\bar{\rho}\bar{\eta})_{\rho_j}(\rho_j-\bar{\rho}_j)+(\bar{\rho}\bar{\eta})_\theta(\theta-\bar{\theta}) 
  \end{align*}
 
In the sequel, let $\mathcal{U} \subset  (\mathbb{R}^+)^{n+1}$ be a set in the positive cone $(\mathbb{R}^+)^{n+1}$ with $\overline{\mathcal{U}}$ compact,
and suppose that states $\omega,\bar{\omega}\in \mathcal{U}$  satisfy
   \begin{equation} \label{ass1}
        0<\rho_j,\bar{\rho}_j\le M
    \end{equation} \begin{equation} \label{ass2}
        0<\delta\le\rho,\bar{\rho}\le M
    \end{equation}\begin{equation} \label{ass3}
        0<\delta\le\theta,\bar{\theta}\le M
    \end{equation}
for some $\delta , M > 0$.

\begin{lemma}\label{lemma1}
Let $\omega,\bar{\omega}\in \mathcal{U}$ satisfy \eqref{ass1}-\eqref{ass3} and suppose that $\psi(\rho_1,\dots,\rho_n,\theta)\in C^3(\overline{\mathcal{U}})$ 
satisfies the thermodynamic relations \eqref{intro-const1}-\eqref{intro-gd} and the hypotheses \eqref{hypo1},\eqref{hypo2} (and thus \eqref{convexitypsi}).
There exist constants $c_1,c_2,c_3>0$ depending on $\delta, M$, such that for $\omega,\bar{\omega}\in\mathcal{U}$ we have
 \begin{equation} \label{conv-eq17}
    c_1|\omega-\bar{\omega}|^2\le J (\omega | \bar \omega)
\end{equation}
\begin{equation} \label{conv-eq19}
    |p(\omega |\bar{\omega})|\le c_2 J (\omega | \bar \omega)
\end{equation}
\begin{equation} \label{conv-eq20}
    |(-\rho\eta)(\omega |\bar{\omega})|\le c_3 J (\omega | \bar \omega) \, .
\end{equation}

\end{lemma}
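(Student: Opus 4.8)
The plan is to recognize $J(\omega|\bar\omega)$ as the Bregman divergence of the internal energy written in its thermodynamically natural variables, to extract a quadratic lower bound from the uniform convexity of that energy on the compact set $\overline{\mathcal{U}}$, and then to read \eqref{conv-eq19}--\eqref{conv-eq20} off as second-order Taylor remainders controlled by the same divergence. Concretely, starting from the second expression in \eqref{lemma-eq1}, $J(\omega|\bar\omega)=\rho e-\bar\rho\bar e-\sum_i\bar\mu_i(\rho_i-\bar\rho_i)-\bar\theta(\rho\eta-\bar\rho\bar\eta)$, I would write $\rho e=\rho\tilde e(\rho_1,\dots,\rho_n,\rho\eta)$ and differentiate the Legendre relation \eqref{eqlegendre}, using \eqref{intro-const3}--\eqref{intro-const4}, to get $\partial_{\rho_i}(\rho\tilde e)=\mu_i$ and $\partial_{\rho\eta}(\rho\tilde e)=\theta$. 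Setting $z(\omega):=(\rho_1,\dots,\rho_n,(\rho\eta)^*(\rho_1,\dots,\rho_n,\theta))$ this gives
\[
J(\omega|\bar\omega)=\rho\tilde e\big(z(\omega)\big)-\rho\tilde e\big(z(\bar\omega)\big)-\nabla(\rho\tilde e)\big(z(\bar\omega)\big)\cdot\big(z(\omega)-z(\bar\omega)\big),
\]
so that $J$ is exactly the relative quantity (Bregman divergence) of $\rho\tilde e$ at the points $z(\omega)$ and $z(\bar\omega)$.

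For the lower bound \eqref{conv-eq17}: since $\psi\in C^3(\overline{\mathcal{U}})$ and \eqref{hypo1} holds, the inverse function theorem makes $\rho\tilde e$ a $C^2$ map near $z(\overline{\mathcal{U}})$, and \eqref{hypo2} makes its Hessian positive definite; compactness then upgrades this to uniform lower and upper bounds for the Hessian. Taylor's theorem with integral remainder (along segments in the state domain, which may be taken convex without loss of generality) yields $J(\omega|\bar\omega)\ge \tilde c_1|z(\omega)-z(\bar\omega)|^2$. Finally $\omega\mapsto z(\omega)$ is bi-Lipschitz on $\overline{\mathcal{U}}$: its Jacobian equals $\partial_\theta(\rho\eta)=\rho c_v/\theta$, and since $c_v=e_\theta=-\theta(\rho\psi)_{\theta\theta}/\rho$ is, by \eqref{ass1}--\eqref{ass3}, continuity, and $(\rho\psi)_{\theta\theta}<0$ from \eqref{convexitypsi}, pinched between two positive constants on $\overline{\mathcal{U}}$, we get $|z(\omega)-z(\bar\omega)|\ge \tilde c_2|\omega-\bar\omega|$. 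Hence \eqref{conv-eq17} holds with $c_1=c_1(\delta,M)$; the same estimate also delivers the companion bound $J(\omega|\bar\omega)\le C|\omega-\bar\omega|^2$, which is not needed in the statement but is convenient downstream.

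For \eqref{conv-eq19}--\eqref{conv-eq20}: by the Gibbs--Duhem relation \eqref{intro-gd} one has $p=\sum_i\rho_i\mu_i-\rho\psi$, and $\rho\eta=-(\rho\psi)_\theta$ by \eqref{intro-const4}, so that $p$ and $\rho\eta$ are $C^2$ on the compact set $\overline{\mathcal{U}}$ whenever $\psi\in C^3(\overline{\mathcal{U}})$. The quantities $p(\omega|\bar\omega)$ and $(-\rho\eta)(\omega|\bar\omega)$ are precisely the second-order Taylor remainders of $p$ and $-\rho\eta$ at $\bar\omega$, so $|p(\omega|\bar\omega)|\le \tfrac12\|\nabla^2 p\|_{L^\infty(\overline{\mathcal{U}})}|\omega-\bar\omega|^2$ and likewise for $(-\rho\eta)(\omega|\bar\omega)$. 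Combining with \eqref{conv-eq17} in the form $|\omega-\bar\omega|^2\le c_1^{-1}J(\omega|\bar\omega)$ produces \eqref{conv-eq19} and \eqref{conv-eq20} with $c_2,c_3$ depending only on $\delta,M$.

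The genuinely substantive step is the identification of $J$ together with the coordinate change: one must see that hypothesis \eqref{hypo2} — convexity of $\rho\tilde e$ in the natural variables $(\rho_1,\dots,\rho_n,\rho\eta)$, not in $\omega$ — is exactly what controls $J$, and that $\theta\mapsto\rho\eta$ is a bona fide bi-Lipschitz change of variables on $\overline{\mathcal{U}}$, which hinges on $c_v$ being pinched between positive constants there. Once this is in place the three estimates are a routine compactness-plus-Taylor argument, with all constants tracked through the $C^2$ norms of $\rho\tilde e$, $p$, $\rho\eta$ on $\overline{\mathcal{U}}$ and through the ellipticity constant of $\nabla^2(\rho\tilde e)$.
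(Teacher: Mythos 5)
Your proof is correct and follows essentially the same route as the paper: rewriting $J$ as the Bregman divergence of $\rho\tilde e$ in the variables $(\rho_1,\dots,\rho_n,\rho\eta)$, using the uniform positive definiteness of $\nabla^2_{(\rho_1,\dots,\rho_n,\rho\eta)}(\rho\tilde e)$ on $\overline{\mathcal{U}}$ together with the bi-Lipschitz change of variables $\theta\mapsto\rho\eta$ (via $\partial(\rho\eta)/\partial\theta=\rho c_v/\theta>0$) for \eqref{conv-eq17}, and second-order Taylor remainders of $p$ and $-\rho\eta$ for \eqref{conv-eq19}--\eqref{conv-eq20}. No gaps.
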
 
\begin{proof}
Consider the form \eqref{lemma-eq1} and use \eqref{energy-const} and the
convexity of  $\rho e=\rho\tilde{e}(\rho_1,\dots,\rho_n,\rho\eta)$ in the variables $(\rho_1,\dots,\rho_n,\rho\eta)$ to obtain
 \begin{equation} \label{lemma-eq2} \begin{split}
   J(\omega | \bar \omega)
        & = \rho\tilde{e}-\overline{\rho\tilde{e}}-\sum_i\overline{\left(\frac{\del\rho\tilde{e}}{\del\rho_i}\right)}(\rho_i-\bar{\rho}_i)-\overline{\left(\frac{\del\rho\tilde{e}}{\del\rho\eta}\right)}(\rho\eta-\bar{\rho}\bar{\eta}) \\
        & \ge c\left(\sum_i|\rho_i-\bar{\rho}_i|^2+|\rho \eta - \bar \rho \bar \eta|^2\right)
    \end{split}
    \end{equation} where $c=\inf_{(\rho_1,\dots,\rho_n,\rho\eta)\in \overline{\mathcal{U}}}\nabla^2_{(\rho_1,\dots,\rho_n,\rho\eta)}\rho\tilde{e}>0$, for $0<\delta\le\rho<M$. 
    Next, the map
    $(\rho_1, \dots , \rho_n, \theta) \mapsto (\rho_1, \dots , \rho_n, \rho \eta)$ defined by $\eta (\rho_1, \dots , \rho_n, \theta) = - \psi_\theta $ can be inverted on the set $\overline{\mathcal{U}}$
    and since $\frac{\del (\rho \eta)}{\del \theta} =\frac{1}{\theta}\rho c_v > 0$, for $\rho,\theta>0$, the inverse map is Lipschitz and 
\begin{equation} \label{lemma-eq3}
        |\theta-\bar{\theta}|^2 + \sum_i|\rho_i-\bar{\rho}_i|^2\le C \left ( |\rho\eta-\bar{\rho}\bar{\eta}|^2+\sum_i|\rho_i-\bar{\rho}_i|^2 \right )
    \end{equation} 
where $C$ depends on $\min_{\overline{\mathcal{U}}} \frac{\del (\rho \eta)}{\del \theta}$.  Combining \eqref{lemma-eq2} with \eqref{lemma-eq3} gives \eqref{conv-eq17}.

The bounds \eqref{conv-eq19}-\eqref{conv-eq20} follow from the Taylor theorem, which provides 
\begin{equation}\label{lemma-eq4}
        p(\omega|\bar{\omega})=(\omega-\bar{\omega})^\top\left(\int_0^1\int_0^s\nabla^2p(\tau \omega+(1-\tau)\bar{\omega})d\tau ds\right)(\omega-\bar{\omega})
    \end{equation} 
    \begin{equation}\label{lemma-eq5}
        (-\rho\eta)(\omega|\bar{\omega})=(\omega-\bar{\omega})^\top\left(\int_0^1\int_0^s\nabla^2(-\rho\eta)(\tau \omega+(1-\tau)\bar{\omega})d\tau ds\right)(\omega-\bar{\omega})
    \end{equation}
and the regularity of $\psi$ implies that $p,\rho\eta \in C^2 (\overline{\mathcal{U}})$.
\end{proof}

Let $\mathbb{T}^3=(\mathbb{R}/2\pi\mathbb{Z})^3$ be the three-dimensional torus. We have the following convergence result:

\begin{theorem}\label{thm1}
    Let $\bar{U}^\kappa$ be a classical solution of \eqref{conv-eq7}-\eqref{conv-eq9} and 
    let $U^{\epsilon,\kappa}$ be a family of classical solutions of \eqref{conv-eq1}-\eqref{conv-eq3} defined on $\mathbb{T}^3\times[0,T]$ for
    some $T < T^*$, which emanate from smooth data $\bar{U}_0^\kappa$, $U_0^{\epsilon,\kappa}$, respectively, and satisfy
   the uniform bounds \eqref{ass1}-\eqref{ass3} for $\delta,M>0$. 
Moreover, assume that $\psi\in C^3(\bar{\mathcal{U}})$ satisfies \eqref{hypo1},  \eqref{hypo2}, 
that
\begin{equation} \label{ass4}
        \sum_j\left|\frac{h_j-\rho_j\mu_j}{\sqrt{\rho_j}}\right|^2\le \alpha(\rho e+1)
    \end{equation} 
for some $\alpha>0$ and that  $0\le\kappa(\rho_1,\dots,\rho_n,\theta)\le M$.
Then, there exist constants $c,C>0$ depending on $\delta, M, \alpha$ but otherwise independent of $\epsilon$ such that 
\begin{equation} \label{conv-eq22}
    \begin{split}
        \int_{\mathbb{T}^3}I(U^{\epsilon,\kappa}|\bar{U}^\kappa)dx & \le c\int_{\mathbb{T}^3}I(U^{\epsilon,\kappa}_0|\bar{U}^\kappa_0)dx+\epsilon C
    \end{split}
\end{equation} In particular, if $\int_{\mathbb{T}^3}I(U^{\epsilon,\kappa}_0|\bar{U}^\kappa_0)dx\to0$ as $\epsilon\to0$, then \begin{equation} \label{conv-eq23}
    \sup_{t\in(0,T)}\int_{\mathbb{T}^3}I(U^{\epsilon,\kappa}|\bar{U}^\kappa)dx\to0 ~~ \textnormal{as $\epsilon\to0$}.
\end{equation} 
\end{theorem}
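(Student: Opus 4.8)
The plan is to integrate the relative entropy identity \eqref{conv-eq15} over $\mathbb{T}^3$ (so the exact divergence drops out by periodicity), keep the two non-negative dissipative contributions on the left-hand side, bound every term on the right-hand side either by $C\,I(U|\bar U)$ or by a controlled fraction of one of the dissipations plus an $O(\epsilon)$ remainder, and conclude with Gr\"onwall's inequality.

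Two structural facts drive all the estimates. First, by \eqref{relativequant} and Lemma \ref{lemma1} one has $\tfrac12\rho|v-\bar v|^2 + c_1|\omega-\bar\omega|^2 \le I(U|\bar U)$; hence $I\ge 0$ and $I$ controls both $\rho|v-\bar v|^2$ and $|\omega-\bar\omega|^2$, and by \eqref{conv-eq19}--\eqref{conv-eq20} also $|p(\omega|\bar\omega)|$ and $|(-\rho\eta)(\omega|\bar\omega)|$. Second, the mass-diffusion dissipation $D_\epsilon := -\tfrac{\bar\theta}{\theta}\sum_j u_j\cdot d_j$ is not merely non-negative: symmetrizing the left-hand side of \eqref{conv-eq5} and using the constraint \eqref{conv-eq6} gives $-\sum_i u_i\cdot d_i = \tfrac1{2\epsilon}\sum_{i,j}b_{ij}\theta\rho_i\rho_j|u_i-u_j|^2$ and $\sum_{i,j}\rho_i\rho_j|u_i-u_j|^2 = 2\rho\sum_j\rho_j|u_j|^2$, so that, with $\mu := \min_{i\neq j}b_{ij}>0$ (the positivity being what makes \eqref{conv-eq5}--\eqref{conv-eq6} solvable via the Bott--Duffin bounds \eqref{dis-eq22}--\eqref{dis-eq23}),
\[
\sum_j \rho_j |u_j|^2 \;\le\; \frac{\epsilon}{\mu\,\delta^2}\, D_\epsilon .
\]

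With these in hand I would proceed term by term. The ``algebraic'' terms on the right of \eqref{conv-eq15} --- those containing $(-\rho\eta)(\omega|\bar\omega)$, $p(\omega|\bar\omega)$, $(\eta-\bar\eta)\rho(v-\bar v)$, $\big(\tfrac{\rho_j}{\rho}-\tfrac{\bar\rho_j}{\bar\rho}\big)\rho(v-\bar v)$ and $\rho(v-\bar v)\nabla\bar v\cdot(v-\bar v)$ --- are all $\le C\,I$ pointwise: the bar-coefficients $\del_t\bar\theta+\bar v\cdot\nabla\bar\theta$, $\mathrm{div}\,\bar v$, $\nabla\bar v$, $\nabla\bar\theta$, $\nabla\bar\mu_j$ are bounded because $\bar U$ is classical on a compact set; $\eta$ and $(\rho_1,\dots,\rho_n)\mapsto\rho_j/\rho$ are Lipschitz on $\overline{\mathcal U}$ since $\psi\in C^3$ and $\rho\ge\delta$ there; and $\rho|v-\bar v|^2$, $|\omega-\bar\omega|^2$ are both $\le 2I$. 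The two diffusive flux terms carry the factor $\epsilon$: by Cauchy--Schwarz and the displayed bound, $\big|\sum_j\nabla\bar\mu_j\cdot\rho_j u_j\big| \le \big(\sum_j|\nabla\bar\mu_j|^2\rho_j\big)^{1/2}\big(\sum_j\rho_j|u_j|^2\big)^{1/2} \le C\sqrt{\epsilon}\,\sqrt{D_\epsilon}$, and similarly, writing $(h_j-\rho_j\mu_j)u_j = \tfrac{h_j-\rho_j\mu_j}{\sqrt{\rho_j}}\,\sqrt{\rho_j}u_j$ and invoking hypothesis \eqref{ass4} together with the boundedness of $\rho e$ on $\overline{\mathcal U}$, $\big|\tfrac1{\bar\theta}\nabla\bar\theta\cdot\sum_j(h_j-\rho_j\mu_j)u_j\big| \le C\sqrt{\epsilon}\,\sqrt{D_\epsilon}$; Young's inequality turns each into $\tfrac14 D_\epsilon + C\epsilon$, the dissipative part being absorbed on the left. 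The heat cross-term $-\big(\tfrac{\nabla\theta}{\theta}-\tfrac{\nabla\bar\theta}{\bar\theta}\big)\tfrac{\nabla\bar\theta}{\bar\theta}(\bar\theta\kappa-\theta\bar\kappa)$ is handled by the splitting $\bar\theta\kappa-\theta\bar\kappa = (\bar\theta-\theta)\kappa + \theta(\kappa-\bar\kappa)$: the first summand carries the weight $\kappa$ and, since $\bar\theta\ge\delta$, is controlled by the heat dissipation $\bar\theta\kappa\big|\tfrac{\nabla\theta}{\theta}-\tfrac{\nabla\bar\theta}{\bar\theta}\big|^2$ via Young, with remainder $\le C|\theta-\bar\theta|^2 \le C\,I$; the second is split similarly, using the regularity of $\kappa$ to bound $|\kappa-\bar\kappa|$ in terms of $|\omega-\bar\omega|$ and of $\kappa$, into an absorbable part and an $O(I)$ remainder. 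Assembling everything and integrating over $\mathbb{T}^3$ gives $\tfrac{d}{dt}\int_{\mathbb{T}^3} I \le C\int_{\mathbb{T}^3} I + C\epsilon$ on $[0,T]$, whence \eqref{conv-eq22} by Gr\"onwall and \eqref{conv-eq23} as an immediate consequence.

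The \emph{main obstacle} is the pair of diffusive-flux estimates, since this is precisely where the $O(\epsilon)$ gap between the two models is produced: the essential point is that $D_\epsilon$ is not merely non-negative but actually dominates $\tfrac1\epsilon\sum_j\rho_j|u_j|^2$ --- a fact that relies on the symmetry of the friction coefficients $b_{ij}$, the constraint $\sum_i\rho_i u_i=0$, and the solvability of the algebraic system furnished by the Bott--Duffin inverse --- and that hypothesis \eqref{ass4} is exactly what is needed to control the enthalpy fluxes $h_j-\rho_j\mu_j$ against $\sqrt{\rho_j}$. The degeneracy allowed in the heat conductivity $\kappa$ is a secondary technical point, dealt with by isolating the $\kappa$-weighted contribution of the cross-term.
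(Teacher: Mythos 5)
Your proposal is correct and follows essentially the same route as the paper: integrate \eqref{conv-eq15} over the torus, bound the algebraic terms by $C\int I$ via Lemma \ref{lemma1} and the Lipschitz bounds on $\eta$ and $\rho_j/\rho$, absorb the diffusive-flux terms into the dissipation via Young's inequality with an $O(\epsilon)$ remainder, and close with Gr\"onwall. The only variation is cosmetic: you obtain the coercivity $\sum_j\rho_j|u_j|^2\le \tfrac{\epsilon}{\mu\delta^2}D_\epsilon$ by direct symmetrization of \eqref{conv-eq5} using the symmetry of $b_{ij}$ and the constraint \eqref{conv-eq6}, whereas the paper quotes the Bott--Duffin bound \eqref{dis-eq22} together with $\mathbb{P}_L\sqrt{\rho}\tilde u=\sqrt{\rho}\tilde u$ --- the two derivations are equivalent, yours being slightly more self-contained. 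One caution on your treatment of the cross-term: the paper takes $\kappa=\bar\kappa$ in $I_7$ (cf.\ the remark in the proof of Theorem \ref{thm2}), and your extra piece $\theta(\kappa-\bar\kappa)$ cannot in general be absorbed into the $\kappa$-weighted dissipation when $\kappa$ is allowed to degenerate, since Young's inequality there produces an uncontrolled factor $(\kappa-\bar\kappa)^2/\kappa$.
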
 

\begin{proof} We first integrate the relative entropy identity \eqref{conv-eq15} to obtain \begin{equation} \label{conv-eq24}
    \begin{split}
        & \frac{d}{dt} \int_{\mathbb{T}^3} I(U^\varepsilon|\bar{U})dx+\int_{\mathbb{T}^3}\bar{\theta}\kappa\left(\frac{\nabla\theta}{\theta}-\frac{\nabla\bar{\theta}}{\bar{\theta}}\right)^2dx-\int_{\mathbb{T}^3}\frac{\bar{\theta}}{\theta}\sum_ju_j\cdot d_jdx \\
        & = \int_{\mathbb{T}^3}(\del_t\bar{\theta}+\bar{v}\cdot\nabla\bar{\theta})(-\rho\eta)(U^\varepsilon|\bar{U})dx-\int_{\mathbb{T}^3}p(U^\varepsilon|\bar{U})\textnormal{div}\bar{v}dx \\
        & + \int_{\mathbb{T}^3}(\eta-\bar{\eta})\rho(v-\bar{v})\cdot\nabla\bar{\theta}dx-\int_{\mathbb{T}^3}\sum_j\nabla\bar{\mu}_j\left(\frac{\rho_j}{\rho}-\frac{\bar{\rho}_j}{\bar{\rho}}\right)\rho(v-\bar{v})dx \\
        & - \int_{\mathbb{T}^3}\rho(v-\bar{v})\nabla\bar{v}\cdot(v-\bar{v})dx-\int_{\mathbb{T}^3}\sum_j\nabla\bar{\mu}_j\cdot\rho_ju_jdx \\
        & - \int_{\mathbb{T}^3}\left(\frac{\nabla\theta}{\theta}-\frac{\nabla\bar{\theta}}{\bar{\theta}}\right)\frac{\nabla\bar{\theta}}{\bar{\theta}}\kappa(\bar{\theta}-\theta)dx-\int_{\mathbb{T}^3}\frac{1}{\bar{\theta}}\nabla\bar{\theta}\sum_j(h_j-\rho_j\mu_j)u_jdx \\
        & =: I_1+\cdots+I_8 \, .
    \end{split}
\end{equation} 
Our strategy is to control terms $I_1$ to $I_5$ by the integral of $I(U| \bar U)$ and terms $I_6$ to $I_8$ by the dissipation on the left-hand side. In particular, to control $I_1$ and $I_2$, we use \eqref{conv-eq20} and \eqref{conv-eq19} respectively. For $I_3$, using the regularity of $\psi$, and thus $\eta$, we have \[ \begin{split}
    (\eta-\bar{\eta})\rho(v-\bar{v})\cdot\nabla\bar{\theta} & \le c\left(\rho|\eta-\bar{\eta}|^2+\rho|v-\bar{v}|^2\right) \\
    & \le c\left(\sum_j|\rho_j-\bar{\rho}_j|^2+|\theta-\bar{\theta}|^2+\rho|v-\bar{v}|^2\right)
\end{split} \] Regarding $I_4$, we have \[ \begin{split}
    \sum_j\nabla\bar{\mu}_j\left(\frac{\rho_j}{\rho}-\frac{\bar{\rho}_j}{\bar{\rho}}\right)\rho(v-\bar{v}) & \le c\left(\rho\sum_j\left(\frac{\rho_j}{\rho}-\frac{\bar{\rho}_j}{\bar{\rho}}\right)^2+\rho(v-\bar{v})^2\right) \\
    & \le c\left(\sum_j(\rho_j-\bar{\rho}_j)^2+\rho(v-\bar{v})^2\right)
\end{split} \] since the map $f_j:(\rho_1,\dots,\rho_n)\mapsto\frac{\rho_j}{\rho_1+\cdots+\rho_n}$, for $j\in\{1,\dots,n\}$ is Lipschitz under the assumption $0<\delta\le\rho$. Indeed \[ \frac{\del f_j}{\del\rho_i}=\begin{cases}
     -\frac{\rho_j}{\rho^2}, & i\not=j \\
    \frac{\rho-\rho_j}{\rho^2}, & i=j  
\end{cases} \] hence \[ \left|\frac{\del f_j}{\del\rho_i}\right|\le\frac{c}{\rho}\le \frac{c}{\delta} \, . \] 
As for $I_5$ \[ \rho(v-\bar{v})\nabla\bar{v}\cdot(v-\bar{v})\le c\rho|v-\bar{v}|^2 \]

Now, if we set $\rho_ju_j=\epsilon\rho_j\tilde{u}_j$ we have, by Young's inequality and for $\mu=\min_{i\not=j}b_{ij} > 0$:
 \[ \begin{split}
    I_6 & \le \frac{\epsilon}{\mu\delta}\int_{\mathbb{T}^3}\sum_j\frac{\rho_j}{\rho}|\nabla\bar{\mu}_j|^2dx+\frac{\epsilon\mu\delta}{4}\int_{\mathbb{T}^3}\rho\sum_j|\sqrt{\rho_j}\tilde{u}_j|^2dx \\
    & \le \epsilon C+\frac{\epsilon\mu\delta}{4}\int_{\mathbb{T}^3}\rho\sum_j|\sqrt{\rho_j}\tilde{u}_j|^2dx=:I_{61}+I_{62} \end{split} \] Likewise, using assumption \eqref{ass4}, we have that \[ \begin{split}
    I_8 & \le \frac{\epsilon}{\mu\delta}\int_{\mathbb{T}^3}\frac{1}{\rho}\sum_j\left|\frac{h_j-\rho_j\mu_j}{\sqrt{\rho_j}}\right|^2\left|\frac{\nabla\bar{\theta}}{\bar{\theta}}\right|^2dx+\frac{\epsilon\mu\delta}{4}\int_{\mathbb{T}^3}\rho\sum_j|\sqrt{\rho_j}\tilde{u}_j|^2dx \\
    & \le \epsilon C+\frac{\epsilon\mu\delta}{4}\int_{\mathbb{T}^3}\rho\sum_j|\sqrt{\rho_j}\tilde{u}_j|^2dx=:I_{81}+I_{82}
\end{split} \] Finally \[ \begin{split}
    I_7 & \le \frac{1}{2}\int_{\mathbb{T}^3}\bar{\theta}\kappa\left(\frac{\nabla\theta}{\theta}-\frac{\nabla\bar{\theta}}{\bar{\theta}}\right)^2dx+\frac{1}{2}\int_{\mathbb{T}^3}\kappa\frac{|\nabla\bar{\theta}|^2}{\bar{\theta}^3}(\theta-\bar{\theta})^2dx \\
    & \le \frac{1}{2}\int_{\mathbb{T}^3}\bar{\theta}\kappa\left(\frac{\nabla\theta}{\theta}-\frac{\nabla\bar{\theta}}{\bar{\theta}}\right)^2dx+c\int_{\mathbb{T}^3}(\theta-\bar{\theta})^2dx=: I_{71}+I_{72}
\end{split} \] 
Using \eqref{conv-eq17} we conclude that \[ I_1+\cdots+I_5+I_{72}\le c\int_{\mathbb{T}^3}I(U^\varepsilon|\bar{U})dx \, . \]

The error terms $I_{62}$, $I_{71}$ and $I_{82}$ are controlled by the dissipation on the left-hand side of \eqref{conv-eq24}. This is due to the assumption that $\delta\le\bar{\theta}$ and the following estimate, in which we use \eqref{dis-eq24} and \eqref{dis-eq22}: 
\[ \begin{split}
    -\frac{1}{\theta}\sum_ju_j\cdot d_j & = \epsilon\rho\sum_{i,j}M_{ij}\sqrt{\rho_i}\tilde{u}_i\cdot\sqrt{\rho_j}\tilde{u}_j \\
    & \ge \epsilon\rho\mu|\mathbb{P}_L\sqrt{\rho}\tilde{u}|^2
\end{split} \] where $\sqrt{\rho}\tilde{u}$ is the vector with components $\sqrt{\rho_i}\tilde{u}_i$, whose projection is computed as \[ \begin{split}
    \mathbb{P}_L\sqrt{\rho}\tilde{u} & = \sum_k(\mathbb{P}_L)_{ik}\sqrt{\rho_k}\tilde{u}_k \\
    & = \sqrt{\rho_i}\tilde{u}_i-\frac{\sqrt{\rho_i}}{\rho}\sum_k\rho_k\tilde{u}_k \\
    & = \sqrt{\rho_i}\tilde{u}_i
\end{split} \] Putting everything together we obtain: 
\begin{equation} \label{conv-eq25}
    \begin{split}
     \frac{d}{dt}    \int_{\mathbb{T}^3} I(U^\varepsilon|\bar{U})dx & + \frac{\delta}{2}\int_{\mathbb{T}^3}\kappa\left(\frac{\nabla\theta}{\theta}-\frac{\nabla\bar{\theta}}{\bar{\theta}}\right)^2dx+\frac{\delta\epsilon\mu}{2}\int_{\mathbb{T}^3}\rho\sum_j|\sqrt{\rho_j}\tilde{u}_j|^2dx \\
        & \le c\int_{\mathbb{T}^3}I(U^\varepsilon|\bar{U})dx+\epsilon C
    \end{split}
\end{equation} for appropriate constants $c,C>0$ independent of $\epsilon$. The dissipation terms are neglected and
we obtain the differential inequality
\begin{equation} \label{conv-eq27}
    \frac{d\varphi^\varepsilon(t)}{dt}\le c\varphi^\varepsilon(t)+\epsilon C
\end{equation}
for
 \[ \varphi^\varepsilon(t)=\int_{\mathbb{T}^3}I(U^\varepsilon(x,t)|\bar{U}(x,t))dx
  \] 
 Then, \eqref{conv-eq22} follows by Gr\"onwall's Lemma. \end{proof}

A particular case of the above analysis is the convergence to the adiabatic theory, i.e. when also $\kappa=0$. Consider the hyperbolic-parabolic system \eqref{conv-eq1}-\eqref{conv-eq4} with the linear system \eqref{conv-eq5} and the constraint \eqref{conv-eq6} and its hyperbolic counterpart \eqref{dis-eq27}-\eqref{dis-eq30}. Then, following the same process as before, we can obtain that the hyperbolic-parabolic system \eqref{conv-eq1}-\eqref{conv-eq4} converges as $\epsilon,\kappa\to0$ to the hyperbolic system without diffusion and heat conduction \eqref{dis-eq27}-\eqref{dis-eq30}: 

\begin{theorem}\label{thm2} 
Let $\bar{U}$ be a classical solution of \eqref{dis-eq27}-\eqref{dis-eq29} defined on a maximal interval of existence $\mathbb{T}^3\times[0,T^*)$ and let $U^{\epsilon,\kappa}$ be a family of classical solutions of \eqref{conv-eq1}-\eqref{conv-eq3} defined on $\mathbb{T}^3\times[0,T]$, for $T<T^*$, emanating from smooth data $\bar{U}_0,U_0^{\epsilon,\kappa}$ respectively. Under the hypotheses of theorem \ref{thm1}, there exist constants $c,C_1,C_2>0$ independent of $\epsilon$ and $\kappa$ such that \begin{equation} \label{conv-eq32}
\begin{split}
    \int_{\mathbb{T}^3}I(U^{\epsilon,\kappa}|\bar{U})dx & \le c\int_{\mathbb{T}^3}I(U^{\epsilon,\kappa}_0|\bar{U}_0)dx+\|\kappa\|_\infty C_1+\epsilon C_2
\end{split}
\end{equation} In particular, if $\int_{\mathbb{T}^3}I(U^{\epsilon,\kappa}_0|\bar{U}_0)dx\to0$ as $\epsilon,\kappa\to0$, then \begin{equation} \label{conv-eq33}
    \sup_{t\in(0,T)}\int_{\mathbb{T}^3}I(U^{\epsilon,\kappa}|\bar{U})dx\to0 ~~ \textnormal{as $\epsilon,\kappa\to0$}.
\end{equation} 
\end{theorem}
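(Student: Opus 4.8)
The plan is to mimic the proof of Theorem \ref{thm1} almost line by line, exploiting the fact that the adiabatic system \eqref{dis-eq27}-\eqref{dis-eq29} is exactly \eqref{conv-eq7}-\eqref{conv-eq9} with vanishing heat conductivity. First I would note that the relative entropy identity \eqref{conv-eq15} was derived for a general comparison solution $\bar U$ of \eqref{conv-eq7}-\eqref{conv-eq10} carrying its own heat-conduction coefficient $\bar\kappa$; specializing to $\bar\kappa\equiv 0$ produces the identity appropriate to the present setting. Integrating it over $\mathbb{T}^3$, the flux terms drop and one is led to
\[
\frac{d}{dt}\int_{\mathbb{T}^3} I(U^{\epsilon,\kappa}|\bar U)\,dx + \int_{\mathbb{T}^3}\bar\theta\kappa\Big(\frac{\nabla\theta}{\theta}-\frac{\nabla\bar\theta}{\bar\theta}\Big)^2 dx - \int_{\mathbb{T}^3}\frac{\bar\theta}{\theta}\sum_j u_j\cdot d_j\,dx = I_1+\cdots+I_8 ,
\]
where $I_1,\dots,I_8$ are as in \eqref{conv-eq24} except that $I_7=-\int_{\mathbb{T}^3}\big(\tfrac{\nabla\theta}{\theta}-\tfrac{\nabla\bar\theta}{\bar\theta}\big)\cdot\tfrac{\nabla\bar\theta}{\bar\theta}\,\bar\theta\kappa\,dx$ now has $\bar\kappa=0$. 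As in Theorem \ref{thm1}, the third term on the left is a genuine dissipation: by \eqref{dis-eq24}, \eqref{dis-eq22}, the substitution $\rho_ju_j=\epsilon\rho_j\tilde u_j$, and $\bar\theta\ge\delta$, it bounds $\delta\epsilon\mu\int_{\mathbb{T}^3}\rho\sum_j|\sqrt{\rho_j}\tilde u_j|^2\,dx$ from below.

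Next I would estimate $I_1$ through $I_5$ precisely as in the proof of Theorem \ref{thm1}: by Lemma \ref{lemma1} (the bounds \eqref{conv-eq17}, \eqref{conv-eq19}, \eqref{conv-eq20}), the regularity $\psi\in C^3(\overline{\mathcal U})$ --- hence Lipschitz continuity of $\eta$, $\mu_j$ and of $c_j=\rho_j/\rho$ on $\overline{\mathcal U}$, using $\rho\ge\delta$ --- together with the uniform bounds \eqref{ass1}-\eqref{ass3} on $\bar U$ and its bounded derivatives, these sum to at most $c\int_{\mathbb{T}^3}I(U^{\epsilon,\kappa}|\bar U)\,dx$. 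The mass-diffusion error terms $I_6$ and $I_8$ are handled with Young's inequality after writing $\rho_ju_j=\epsilon\rho_j\tilde u_j$: for $I_6$ using $\rho_j/\rho\le1$ and $|\nabla\bar\mu_j|$ bounded; for $I_8$ using hypothesis \eqref{ass4} together with the boundedness of $\rho e$ (from \eqref{ass1}, \eqref{ass3}) and of $\nabla\bar\theta/\bar\theta$. Each contributes $\epsilon C+\tfrac{\epsilon\mu\delta}{4}\int_{\mathbb{T}^3}\rho\sum_j|\sqrt{\rho_j}\tilde u_j|^2\,dx$, and the quadratic parts are absorbed by the mass-diffusion dissipation.

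The step where Theorem \ref{thm2} departs from Theorem \ref{thm1}, and the one I expect to be the main obstacle, is the heat-conduction error $I_7$. In Theorem \ref{thm1} the factor $\bar\theta\kappa-\theta\bar\kappa$ equals $\kappa(\bar\theta-\theta)$, supplying an extra power of $\theta-\bar\theta$, so that $I_7$ is quadratic in the relative variables and controllable by $\int I(U|\bar U)$. With $\bar\kappa=0$ this cancellation is lost; a Young split gives $I_7\le\tfrac12\int_{\mathbb{T}^3}\bar\theta\kappa\big(\tfrac{\nabla\theta}{\theta}-\tfrac{\nabla\bar\theta}{\bar\theta}\big)^2dx+\tfrac12\int_{\mathbb{T}^3}\tfrac{\kappa|\nabla\bar\theta|^2}{\bar\theta}\,dx$, where the first summand is absorbed by the left-hand dissipation while the second is merely $O(\|\kappa\|_\infty)$, using $\bar\theta\ge\delta$ and the fact that $\nabla\bar\theta$ is uniformly bounded on $[0,T]$ since $\bar U$ is a classical solution up to $T<T^*$. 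This forces the $\|\kappa\|_\infty C_1$ term in \eqref{conv-eq32}. Finally, dropping the nonnegative dissipation terms and setting $\varphi(t)=\int_{\mathbb{T}^3}I(U^{\epsilon,\kappa}(x,t)|\bar U(x,t))\,dx$, one arrives at $\varphi'(t)\le c\,\varphi(t)+\epsilon C_2+\|\kappa\|_\infty C_1$; Grönwall's lemma then yields \eqref{conv-eq32}, and \eqref{conv-eq33} follows by letting $\epsilon,\kappa\to0$.
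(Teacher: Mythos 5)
Your proposal is correct and follows the paper's proof essentially verbatim: the paper likewise reduces everything to the argument of Theorem \ref{thm1} and isolates $I_7$ as the only term that changes when $\bar\kappa=0$, treating it with exactly the Young split you describe, absorbing half into the heat-conduction dissipation and bounding the remainder by $\|\kappa\|_\infty C_1$ before applying Gr\"onwall. No gaps.
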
 
\begin{proof}
    The proof is identical to the proof of Theorem \ref{thm1}, with the only difference that $\bar{\kappa}=0$ (whereas in the previous case $\kappa=\bar{\kappa}$) and thus $I_7$ is controlled by: \[ \begin{split}
    I_7 & = -\int_{\mathbb{T}^3}\kappa\left(\frac{\nabla\theta}{\theta}-\frac{\nabla\bar{\theta}}{\bar{\theta}}\right)\cdot\nabla\bar{\theta}  \\
    & \le \frac{1}{2}\int_{\mathbb{T}^3}\bar{\theta}\kappa\left(\frac{\nabla\theta}{\theta}-\frac{\nabla\bar{\theta}}{\bar{\theta}}\right)^2dx+\frac{1}{2}\int_{\mathbb{T}^3}\kappa\frac{|\nabla\bar{\theta}|^2}{\bar{\theta}}dx \\
    & \le \frac{1}{2}\int_{\mathbb{T}^3}\bar{\theta}\kappa\left(\frac{\nabla\theta}{\theta}-\frac{\nabla\bar{\theta}}{\bar{\theta}}\right)^2dx+\|\kappa\|_\infty C_1
\end{split} \]
\end{proof}

Next, we present a commentary on the hypotheses for Theorems \ref{thm1} and \ref{thm2}. The goal here is to discuss
aspects of the theory of smooth solutions; the situation for weak solutions presents serious challenges. The hypotheses on the bounds
\eqref{ass2}, \eqref{ass3} reflect the loss of strict hyperbolicity (and even hyperbolicity) of the
model at $\rho =0$. Also, we would expect that such continuum models are not valid for
very large temperatures or temperature near zero.

The hypothesis $\psi \in C^3 (\bar U)$ is a drawback as it does not hold at $\rho_i = 0$ for realistic models and at the same time one would 
insist on the range \eqref{ass1} that guarantees some of the components may disappear as an outcome of interactions.
The main problematic term is $I_3$.
Realistic models are discussed in  section \ref{sec:multigas} dealing with the multicomponent ideal gas. 
The reader will notice that $\psi \in C^3 (\bar U)$ holds on the restricted range
 \begin{equation} \label{ass1pr}
        0< \delta \le \rho_j,\bar{\rho}_j\le M
    \end{equation}
and gives convergence for solutions taking values in that range \eqref{ass1pr}, \eqref{ass2}, \eqref{ass3}.

The need concerning \eqref{ass4} originates from a deficiency of the models discussed here.
For the general model,  the partial energies $e_i$, pressures $p_i$ and enthalpies $h_i$ are not determined from the
 total free energy $\rho \psi$ via \eqref{const-final}, yet they enter the balance equations \eqref{intro-eq4}-\eqref{intro-eq7}
and thus extra constitutive relations have to be supplied. Hypothesis \eqref{ass4} concerns these extra constitutive relations. 
For the  case of simple mixtures, $\rho \psi = \sum_i \rho_i \psi_i(\rho_i, \theta)$, the need for extra constitutive relations does not arise;
one computes using \eqref{appB-eq12}-\eqref{appB-eq15} that 
$\frac{1}{\sqrt{\rho_j}} (h_j - \rho_j \mu_j) = - \theta \sqrt{\rho_j} \frac{ \del{\psi_j} }{\del \theta}$
and the terms are bounded  from \eqref{ass1}-\eqref{ass3}.

\section{Multicomponent ideal gases}\label{sec:multigas}
We present the constitutive model of an ideal multicomponent gas and compute the relative
constitutive functions that appear in the relative entropy formula \eqref{conv-eq15}. We refer to Callen \cite[Sec 13]{Cal} and 
Giovangigli \cite[Sec 6]{GioMulti} for details on the multicomponent ideal gas laws. Here, we outline a constitutive model defined
in terms of densities $\rho_i$ and temperature $\theta$. The model is a simple mixture of ideal gases
where the free energy of each component is given by $\rho_i \psi_i = R_i \theta \rho_i \log \rho_i  - c_i \rho_i \theta \log \theta$,
where $R_i > 0$ is the engineering gas constant and $c_i > 0$ the constant heat capacity of the $i$-th component, and the mixture free energy is
\begin{equation}
\label{feig}
\rho \psi = \sum_i \rho_i \psi_i = \sum_i R_i \theta \rho_i \log \rho_i  -  \sum_i c_i \rho_i \theta \log \theta \, .
\end{equation}
We then have
\begin{equation}
\begin{aligned}
\mu_j &= \frac{\del (\rho \psi)}{\del \rho_j} = R_j (1 + \log \rho_j) \theta - c_j \theta \log \theta
\\
\rho \eta &= - \frac{\del (\rho \psi)}{\del \theta} = - \sum_i R_i \rho_i \log \rho_i  + \sum_i c_i \rho_i (1 +  \log \theta)
\\
\rho e &= \sum_i c_i \rho_i \theta
\\
p &= - \rho \psi + \sum_j \rho_j \mu_j = \sum_j R_j \rho_j \theta
\end{aligned}
\end{equation}

The relative quantities are computed as follows: Using \eqref{lemma-eq1}, we have
\begin{equation}
\label{Jig}
\begin{aligned}
J(\omega | \bar \omega) 
&= \bar \theta \sum_i R_i \Big ( \rho_i \log \rho_i - \bar \rho_i \log \bar \rho_i - (1 + \log \bar \rho_i) (\rho_i - \bar \rho_i ) \Big ) 
\\
&\qquad + 
\bar \theta \sum_i c_i \rho_i  \Big ( - \log \theta + \log \bar \theta + \frac{1}{\bar \theta} (\theta - \bar \theta) \Big )
\\
&= \bar \theta \sum_i R_i (x \log x) \big (\rho_i | \bar \rho_i \big ) + \sum_i c_i \rho_i \bar \theta (- \log y)(\theta | \bar \theta)
\end{aligned}
\end{equation}
where $(x \log x) \big (\rho_i | \bar \rho_i \big ) $ is the quadratic part of the Taylor expansion of $x \log x$ and ditto for
$(- \log y)(\theta | \bar \theta)$. Due to the convexity of $(x \log x)$ and $(-\log y)$ both terms are positive.

Similarly, we compute
\begin{equation}
\label{pig}
p(\omega | \bar \omega) = \sum_j R_j  (\rho_j - \bar \rho_j ) (\theta - \bar \theta)
\end{equation}
and
\begin{equation}
\label{etaig}
\begin{aligned}
(-\rho \eta) (\omega | \bar \omega) &= -\rho \eta + \bar \rho \bar \eta + \sum_j  \overline{\frac{\del (\rho \eta)}{\del \rho_j }} (\rho_j - \bar \rho_j)
+ \overline{\frac{\del (\rho \eta)}{\del \theta}} (\theta - \bar \theta)
\\
&=  \sum_i R_i \big (x \log x \big ) \big (\rho_i | \bar \rho_i \big ) + \sum_i c_i \rho_i (- \log y)(\theta | \bar \theta)
 - \sum_i \frac{c_i}{\bar \theta} (\rho_i - \bar \rho_i) (\theta - \bar \theta)
\end{aligned}
\end{equation}
Finally observe that the relative entropy \eqref{relativequant} takes the form
\begin{equation}
\label{relenig}
I(U | \bar U) = \tfrac{1}{2} \rho | v - \bar v|^2 + \bar \theta \sum_i R_i \big (x \log x \big ) \big (\rho_i | \bar \rho_i \big ) 
+ \bar \theta   \sum_i c_i \rho_i \big(-\log y \big ) \big (\theta | \bar \theta \big ) 
\end{equation}

Using the explicit formulas \eqref{Jig}, \eqref{pig} and \eqref{etaig} the facts  $(x \log x)^{\prime \prime} = \frac{1}{x}$ and 
$(-\log x)^{\prime \prime} =\frac{1}{x^2}$ we obtain an analog of Lemma \ref{lemma1} for the
ideal gas:

\begin{lemma}\label{lemma2}
Let $\omega,\bar{\omega}\in \mathcal{U}$ satisfy \eqref{ass1}-\eqref{ass3} and let $\rho \psi$ 
be given in \eqref{feig}.
There exist constants $c_1,c_2>0$ depending on $\delta, M$, such that 
 \begin{equation} \label{multiig-eq17}
    c_1|\omega-\bar{\omega}|^2\le J (\omega | \bar \omega)
\end{equation}
\begin{equation} \label{multiig-eq19}
    |p(\omega |\bar{\omega})| +  |(-\rho\eta)(\omega |\bar{\omega})|  \le c_2J (\omega | \bar \omega)  \, .
\end{equation}
\end{lemma}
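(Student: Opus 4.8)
The plan is to read everything off the three explicit formulas \eqref{Jig}, \eqref{pig}, \eqref{etaig}, using only the convexity facts $(x\log x)'' = 1/x$ and $(-\log y)'' = 1/y^2$. The point I would stress is structural: each of $J(\omega|\bar\omega)$, $p(\omega|\bar\omega)$ and $(-\rho\eta)(\omega|\bar\omega)$ is assembled out of just two building blocks --- the scalar relative entropies $(x\log x)(\rho_i|\bar\rho_i)$ and $(-\log y)(\theta|\bar\theta)$, which are the quadratic Taylor remainders of $x\log x$ and $-\log y$ --- together with the mixed quadratic $(\rho_i - \bar\rho_i)(\theta - \bar\theta)$. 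So the whole argument reduces to a lower bound for the building blocks and a Young-type bound for the mixed term.

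First I would prove \eqref{multiig-eq17}. Writing the building blocks with integral remainder as in the proof of Lemma \ref{lemma1}, one has $(x\log x)(\rho_i|\bar\rho_i) = (\rho_i - \bar\rho_i)^2\int_0^1 (1-\tau)\,\frac{d\tau}{\bar\rho_i + \tau(\rho_i - \bar\rho_i)}$, and similarly $(-\log y)(\theta|\bar\theta)$ with integrand $(\bar\theta + \tau(\theta-\bar\theta))^{-2}$. Since the integration variable stays between the two arguments, which lie in $(0,M]$ by \eqref{ass1} and \eqref{ass3}, these integrands are bounded below by $1/M$ and $1/M^2$ respectively, so the blocks dominate $\tfrac{1}{2M}(\rho_i - \bar\rho_i)^2$ and $\tfrac{1}{2M^2}(\theta-\bar\theta)^2$. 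Inserting this in \eqref{Jig} and using $\bar\theta \ge \delta$ together with $\sum_i c_i\rho_i \ge (\min_i c_i)\rho \ge (\min_i c_i)\delta$ from \eqref{ass2} gives \eqref{multiig-eq17} with $c_1$ depending only on $\delta$, $M$.

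Then I would turn to \eqref{multiig-eq19}. For $p(\omega|\bar\omega)$, formula \eqref{pig} shows it is purely the mixed term, so Cauchy--Schwarz gives $|p(\omega|\bar\omega)| \le C|\omega - \bar\omega|^2$, and \eqref{multiig-eq17} converts this to $\le (C/c_1)\,J(\omega|\bar\omega)$. For $(-\rho\eta)(\omega|\bar\omega)$ the trick --- and this is the only delicate point --- is \emph{not} to bound the blocks $(x\log x)(\rho_i|\bar\rho_i)$ from above by $(\rho_i - \bar\rho_i)^2$, which is impossible since $1/x$ blows up as $\rho_i\to 0$ and \eqref{ass1} provides no lower bound on $\rho_i$. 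Instead, comparing \eqref{etaig} with \eqref{Jig} one sees that the block part of $(-\rho\eta)(\omega|\bar\omega)$ is exactly $\tfrac{1}{\bar\theta}\,J(\omega|\bar\omega) \le \tfrac1\delta\,J(\omega|\bar\omega)$, for free; only the residual mixed term $\sum_i \tfrac{c_i}{\bar\theta}(\rho_i - \bar\rho_i)(\theta-\bar\theta)$ remains, which is handled as for $p$. Adding the two contributions yields \eqref{multiig-eq19}.

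The hard part is really just the issue flagged in the discussion after Theorem \ref{thm2}: without a positive lower bound on the $\rho_i$, the naive estimate ``relative entropy $\lesssim |\omega - \bar\omega|^2$'' used in Lemma \ref{lemma1} is unavailable. The resolution above --- recognising the unbounded contributions in $(-\rho\eta)(\omega|\bar\omega)$ as a rescaling of $J$ itself, so that they require no separate estimate --- circumvents this, and everything else is routine bookkeeping with the explicit ideal-gas formulas.
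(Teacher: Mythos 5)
Your proof is correct and follows the route the paper itself indicates (the paper states Lemma \ref{lemma2} without a written proof, pointing only to the explicit formulas \eqref{Jig}, \eqref{pig}, \eqref{etaig} and the second derivatives of $x\log x$ and $-\log y$). Your one genuinely non-routine step --- recognising that the $(x\log x)(\rho_i|\bar\rho_i)$ and $(-\log y)(\theta|\bar\theta)$ blocks in \eqref{etaig} sum exactly to $\tfrac{1}{\bar\theta}J(\omega|\bar\omega)$, so that no upper bound on the blocks in terms of $|\omega-\bar\omega|^2$ is needed despite the absence of a positive lower bound on the individual $\rho_i$ in \eqref{ass1} --- is precisely the detail the paper leaves implicit, and you have resolved it correctly.
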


The missing element to conclude the proof of an analog of Theorem \ref{thm1} for the multicomponent ideal gas is an
estimate of the type
 \[ \begin{split}
  |  (\eta-\bar{\eta})\rho(v-\bar{v})\cdot\nabla\bar{\theta} |  \le C \Big ( \frac{1}{2} \rho |v - \bar v|^2 + J(\omega| \bar \omega) \Big )
\end{split} \] 
However, such an estimate is not valid for solutions that take value on the range \eqref{ass1}-\eqref{ass3}.


\begin{appendix}
\section{Equilibrium Thermodynamics}\label{appA}

Following \cite[sec 2,3,5]{Cal} we describe some elements of the thermodynamic theory in equilibrium, known as thermostatics, which seeks to describe the equilibrium states to which systems eventually evolve.  It is assumed that there exist equilibrium states of simple systems and that they are characterized by the internal energy of the system $E$, the volume $V$ and the mole number 
of the components of the system $M_1,\dots,M_n$. These quantities are called extensive parameters. It is postulated that there exists a function $H$ of the extensive parameters, called entropy, defined for all equilibrium states, so that the values assumed by the extensive parameters in the absence of internal constraints, are those that maximize the entropy over the manifold of constrained equilibria.  We refer to \cite[sec 2]{Cal}  for the precise statements and details. 
Note that the existence of the entropy is postulated only for equilibrium states, that the entropy of the system is postulated to be  provided by an extremum principle  and 
to be known as a function of the extensive parameters,
\begin{equation} \label{entrfund}
    H=H(E,V,M_1,\dots,M_n) \, .
\end{equation} 
This function, known as the  fundamental relation,  contains all thermodynamic information about the system. As the entropy of a composite system is additive over the constituent subsystem,
this function has to be homogeneous of first order,
 \begin{equation} \label{entrhom}
    H(\lambda E,\lambda V,\lambda M_1,\dots,\lambda M_n)=\lambda H(E,V,M_1,\dots,M_n),~\forall \lambda > 0, 
\end{equation} 
it is assumed to be differentiable, and  is postulated to be an increasing function of the internal energy.
As a result, the fundamental relation can be inverted with respect to the energy leading to a differentiable function of $H,V,M_1,\dots,M_n$ and an alternative form of the fundamental relation \begin{equation} \label{enerfund}
    E=E(H,V,M_1,\dots,M_n) \, ,
\end{equation} 
for which homogeneity of first order still holds.

With the aforementioned postulates we are ready to proceed to the thermodynamic analysis. Since the energy is a quantity that is easier to understand intuitively and also to measure in experiments, we prefer to use the energy representation of the fundamental relation, instead of the entropy one. In this case, the extremum principle is no longer the maximization of the entropy in equilibrium, but rather the minimization of the energy. The condition that guarantees the equivalence between the two principles is the fact that the entropy is an increasing function of the energy.

Now, the differential of the internal energy $E$ is:
\begin{equation} \label{appA-eq1}
    dE=\frac{\partial E}{\partial H}dH+\frac{\partial E}{\partial V}dV+\sum_{j=1}^n\frac{\partial E}{\partial M_j}dM_j
\end{equation} The partial derivatives in \eqref{appA-eq1} are called intensive parameters and occur so frequently that we introduce special symbols for them: \begin{equation} \label{const1}
    \frac{\partial E}{\partial H}=\theta, ~~ \textnormal{the temperature} \end{equation} \begin{equation} \label{const2}
    \frac{\partial E}{\partial V}=-p, ~~ \textnormal{the negative (total) pressure} \end{equation}
    \begin{equation} \label{const3}
    \frac{\partial E}{\partial M_i}=\mu_i, ~~ \textnormal{the chemical potential of the $i$-th component} \end{equation} With this notation, \eqref{appA-eq1} reads: \begin{equation} \label{appA-eq2}
    dE=\theta dH-pdV+\sum_j\mu_jdM_j
\end{equation} 
We briefly mention here that in the (equivalent) entropy representation the intensive parameters are \[ \frac{\partial H}{\partial E}=\frac{1}{\theta} ~,~ \frac{\partial H}{\partial V}=\frac{p}{\theta} ~,~ \frac{\partial H}{\partial M_i}=-\frac{\mu_i}{\theta} \] 

The first-order homogeneity of the fundamental equation \eqref{enerfund} provides an essential relation that will be used later in Appendix \ref{appB}. Differentiating \[ E(\lambda H,\lambda V,\lambda M_1,\dots,\lambda M_n)=\lambda E(H,V,M_1,\dots,M_n) \] with respect to $\lambda$ and then taking $\lambda=1$ gives \begin{equation} \label{appA-eq3}
    E=\theta H-pV+\sum_j\mu_jM_j
\end{equation} Equation \eqref{appA-eq3} is known as Euler equation, some authors refer to it as Gibbs-Duhem relation and we will retain this name here.

Back to the homogeneity of the fundamental relation, we can choose $\lambda=\frac{1}{V}$ and by introducing the partial mass densities $\rho_i=\frac{M_i}{V}$ and the total mass density 
$\rho=\frac{1}{V}$ (normalizing the total mass $\sum M_i=1$), the fundamental relation reads \begin{equation} \label{enerfund2}
    \rho e=\rho e(\rho\eta,\rho_1,\dots,\rho_n)
\end{equation} where $\rho e$ is the specific internal energy and $\rho\eta$ the specific entropy. Then \eqref{const1}-\eqref{const3} read 
\begin{equation} \label{energy-const}
    \frac{\partial(\rho e)}{\partial(\rho\eta)}=\theta ~,~ \frac{\partial(\rho e)}{\partial\rho_i}=\mu_i
\end{equation} and the Gibbs-Duhem relation takes the form
 \begin{equation} \label{gd}
    \rho e=\rho\eta\theta-p+\sum_j\rho_j\mu_j
\end{equation}

In thermodynamics, we often need to pass from a set of variables that contains only extensive parameters to another that contains some intensive parameters as well. For instance, throughout this paper, we select the temperature is one of the prime variables and we need to express the equations of state through this variable. To this end, we use partial Legendre transforms of the internal energy replacing some extensive variables by the corresponding intensive ones. These partial Legendre transforms are called thermodynamic potentials. A well-known example is the enthalpy: the enthalpy is the partial Legendre transform of the internal energy $E$ that replaces the volume $V$ by the pressure $p$ as an independent variable. Thus, the enthalpy $h$ is a function of $H,p,M_1,\dots,M_n$, defined by \begin{equation} \label{enthalpy}
    h=E+pV
\end{equation} where the replacement is achieved by first solving the relation \[ \frac{\partial E}{\partial V}=-p \] with respect to $V$ and then eliminating $V$ from \eqref{enthalpy}. In our case, we are interested in passing from the set $\{H,V,M_1,\dots,M_n\}$ to $\{\theta,V,M_1,\dots,M_n\}$, i.e. to replace the entropy by the temperature. The thermodynamic potential that allows us to do so is the Helmholtz free energy defined by \begin{equation} \label{free-en}
    \Psi=E-H\theta
\end{equation} The replacement is achieved by solving \[ \frac{\partial E}{\partial H}=\theta \] with respect to $H$ and eliminating $H$ from \eqref{free-en}. Using the notation of \eqref{enerfund2}, we have \begin{equation} \label{free-en2}
    \rho\psi=\rho\psi(\theta,\rho_1,\dots,\rho_n)
\end{equation}
 \begin{equation} \label{free-en-def}
    \rho\psi=\rho e-\rho\eta\theta
\end{equation} 
\begin{equation} \label{energy}
    e(\theta,\rho_1,\dots,\rho_n)=-\theta^2\left(\frac{\psi}{\theta}\right)_\theta
\end{equation} 
and the Gibbs-Duhem relation reads \begin{equation} \label{appA-eq4}
    \rho\psi+p=\sum_j\rho_j\mu_j \, .
\end{equation} 
Furthermore, one can use the relations \eqref{energy-const} to obtain 
\begin{equation} \label{const-final}
    \frac{\del(\rho\psi)}{\del\rho_i}=\mu_i ~,~ \frac{\del(\rho\psi)}{\del\theta}=-\rho\eta
\end{equation}

Finally, we note that the first postulate states that the entropy attains its maximum in equilibrium. This is translated as follows: the first variation of the entropy functional must vanish and the second one must be negative. The second condition in particular determines the stability of the predicted equilibrium states and suggests that the entropy is a concave function. A byproduct of this, is that the specific heat at constant volume $c_v:=e_\theta(\theta,\rho_1,\dots,\rho_2)$ is positive. Should we reformulate the stability criteria in the energy representation, we see that the energy attains its minimum in equilibrium, hence it is a convex function. These conditions can be extended to the thermodynamic potentials, as well. In particular, the Helmholtz free energy is a concave function of the temperature.


\section{Consistency with the Clausius-Duhem inequality}\label{appB}

In this appendix we outline the derivation of the Type-II model \eqref{intro-eq1}-\eqref{intro-eq3} for multicomponent flows, employed in this work, and derived in \cite{BDre}.
The framework is that of continuum mechanics, in which bodies are modeled as continuous media and their behavior is governed by (i) balance laws and (ii) constitutive relations 
that connect the various quantities and characterize the material response. For more details on modeling multicomponent fluids we refer to \cite{DP,GioMulti}.

The emphasis of this presentation and the development in \cite{BDre} is on the thermodynamic structure of the model so that consistency
is guaranteed with the Clausius-Duhem inequality. The model is using 
as primitive variables the mass densities, the velocities of the constituents and the temperature of the mixture. (The model in \cite{BDre} uses the specific entropy instead of
the temperature but this minor deviation does not cause major differences.)  A Type-II model is considered employing the following list of equations for the partial mass, partial momentum 
and (total) energy balance respectively: 
\begin{equation} \label{appB-eq1}
    \del_t\rho_i+\textnormal{div}(\rho_iv_i)=m_i
\end{equation} \begin{equation} \label{appB-eq2}
    \del_t(\rho_iv_i)+\textnormal{div}(\rho_iv_i\otimes v_i)=\textnormal{div}(S_i)+\rho_ib_i+f_i
\end{equation} \begin{equation} \label{appB-eq3}
    \begin{split}
        \del_t(\rho e+\sum_i\frac{1}{2}\rho_iv_i^2)+\textnormal{div}((\rho e & + \sum_i\frac{1}{2}\rho_iv_i^2)v)=\textnormal{div}(q-\sum_i(\rho_ie_i+p_i+\frac{1}{2}\rho_iv_i^2)u_i) \\
        & - \textnormal{div}(pv)+\textnormal{div}(v_i\cdot\sigma_i)+\rho b\cdot v+\rho r+\sum_i\rho_ib_i\cdot u_i
    \end{split}
\end{equation} where $\rho_i$ is the  mass density of the $i$-th component, $m_i$ the production of mass, $v_i$ the velocity, $S_i$ the partial stress tensor, $b_i$ the body force on the $i$-th component, $f_i$ the momentum production, $e_i$ the specific internal energy, $q_i$ the heat flux, $r_i$ the radiative heat supply and $\ell_i$ the internal energy production. 
Moreover, we define the total mass $\rho:=\sum_i\rho_i$, the barycentric velocity of the mixture $v$ such that $\rho v=\sum_i\rho_iv_i$. In \eqref{appB-eq3} we have also introduced the diffusional velocities $u_i:=v_i-v$, the thermal energy $\rho e:=\sum_i\rho_ie_i$, the total pressure $p:=\sum_ip_i$, total heat flux $q:=\sum_iq_i$, the net force applied to the mixture $\rho b:=\sum_i\rho_ib_i$ and the total radiative heat supply $\rho r:=\sum_i\rho_ir_i$.

Some remarks on the model: 

(a) There is a single temperature $\theta$ common to all components that is posiive.  For the thermodynamic quantities 
internal energy $e$, entropy $\eta$, and pressures $p_i$ we employ the relations \eqref{free-en2}, \eqref{free-en-def}, \eqref{appA-eq4} and \eqref{const-final}.

(b) The production of mass is often due to chemical reactions; here, we consider non-reactive fluids, i.e. we take $m_i=0$. For a treatment of the same model 
with chemical reactions we refer to \cite{BDre,GioMulti}. 

(c) The momentum productions $f_i$ model binary interactions between the species, e.g. due to friction. Likewise, the energy production $\ell_i$
is neglected due to (d), as we consider only the total energy equation \eqref{appB-eq3}, which is obtained by summing up all partial energy equations.

(d) Even though we allow for mass, momentum and energy production, we require that the total mass, total momentum and total energy be conserved, thus having the constraints \begin{equation} \label{appB-eq4}
    \sum_{i=1}^nm_i=\sum_{i=1}^nf_i=\sum_{i=1}^n\ell_i=0
\end{equation} where $n\in\mathbb{N}$ is the number of constituents.

(e) The consideration of the terms $b_i$ and $r_i$ is not essential, since both are external factors and can be modified accordingly (e.g. they can be taken equal to zero).

(f) The partial stresses are decomposed as $S_i=-p_i\mathbb{I}+\sigma_i$, where $-p_i\mathbb{I}$ is the elastic part of the stress tensor, with $p_i$ the partial pressures and $\mathbb{I}$ the identity tensor and $\sigma_i$ the viscous part. Here, for simplicity, we take $\sigma_i=0$ and thus $\textnormal{div}(S_i)$ reduces to $-\nabla p_i$. For the treatment of the theory with viscosity we refer to \cite{BDre}.

In order for our system to be thermodynamically complete, it needs to be consistent with the second law of thermodynamics expressed in the form of the Clausius-Duhem inequality: \begin{equation} \label{appB-eq5}
    \del_t(\rho\eta)+\textnormal{div}(\rho\eta v)\ge\textnormal{div}\Phi+\frac{\rho r}{\theta}
\end{equation} where $\rho\eta$ stands for the specific entropy, $\Phi$ for the entropy flux and $\theta$ for the temperature. The fact that we have an inequality signifies that there is an entropy production $\zeta$ that balances the two sides in \eqref{appB-eq5}. The entropy production is not a-priori specified. 
 Instead, the only information available is that according to the second law of thermodynamics, the entropy production must be non-negative. 
 As opposed to the balance laws of mass, momentum and energy, which determine the thermodynamic process from the assigned body force $b$, heat supply $r$, boundary and initial conditions,
  the Clausius-Duhem inequality plays the role of an admissibility criterion for thermodynamic processes that already comply with the balance laws. 
 In the case of one species the entropy flux is the heat flux divided by the temperature, however there is no available information for the multicomponent case, 
 and $\Phi$ is determined along with the entropy production, by the process of checking consistency with the Clausius-Duhem inequality.

For the thermodynamic reduction, one introduces the Helmholtz free energy $\rho\psi$, which inherits the relations \eqref{free-en2}-\eqref{const-final} as if we were in equilibrium and
using the entropy inequality \eqref{appB-eq5} and the balance laws \eqref{appB-eq1}-\eqref{appB-eq3}, one constructs an inequality known as dissipation inequality. Then, by controlling the body forces $b_i$ and heat supplies $r_i$ we can construct smooth processes that satisfy the mass, momentum and energy balances but attain at some point $(x,t)$ arbitrarily prescribed values for the primitive variables and their derivatives. Hence, the dissipation inequality is violated unless certain relations hold (see \cite{BDre}). From this process, one determines the entropy flux and 
entropy production 
\begin{equation} \label{appB-eq9}
    \Phi=\frac{q}{\theta}-\frac{1}{\theta}\sum_i(\rho_ie_i+p_i-\rho_i\mu_i)u_i
\end{equation} \begin{equation} \label{appB-eq10}
    \zeta=\frac{1}{\theta^2}q\cdot\nabla\theta-\frac{1}{\theta}\sum_iu_i\cdot\left(f_i-\nabla p_i+\rho_i\nabla\mu_i+\frac{1}{\theta}(\rho_ie_i+p_i-\rho_i\mu_i)\nabla\theta\right)
\end{equation} 

Equation \eqref{appB-eq10} is too complicated to allow for investigating necessary and sufficient conditions so that $ \zeta\ge0$ holds everywhere. 
However, following Onsager's reciprocal relations, one decouples the entropy production terms that correspond to different dissipative mechanisms and asks for linear dependence with a symmetric and positive semi-definite matrix of phenomenological coefficients,
According to this, a sufficient condition for the heat conduction term to be nonnegative is \begin{equation} \label{heat-cond} 
    q=\kappa\nabla\theta
\end{equation} with $\kappa=\kappa(\rho_1,\dots,\rho_n,\theta)\ge0$, which is the standard Fourier's law. Following a more sophisticated argument (originally due to Truesdell),
 one obtains the form for the momentum production \begin{equation} \label{mass-diff}
    f_i=\nabla p_i-\rho_i\nabla\mu_i-\frac{1}{\theta}(\rho_ie_i+p_i-\rho_i\mu_i)\nabla\theta-\theta\sum_{j\not=i}b_{ij}\rho_i\rho_j(v_i-v_j) \, , 
\end{equation} where $b_{ij}=b_{ij}(\rho_i,\rho_j,\theta)$ are symmetric, nonnegative binary-type interactions. 
For the full derivation we refer to \cite{BDre} section 7. 
After plugging the two closures in \eqref{appB-eq10} and using the symmetry of $b_{ij}$ the entropy production reads: \begin{equation} \label{appB-eq11}
    \zeta=\frac{1}{\theta^2}\kappa|\nabla\theta|^2+\frac{1}{2}\sum_{i=1}^n\sum_{j\not=i}b_{ij}\rho_i\rho_j|u_i-u_j|^2
\end{equation} 

An interesting comment is that even after determining the constitutive relations via the thermodynamic reduction, our model is not fully closed. Indeed, 
although we can compute the thermal energy from \eqref{energy} and the total pressure from the Gibbs-Duhem relation \eqref{appA-eq4}, 
it turns out that if we are just given the free energy density $\psi$ there is no way to determine the partial pressures $p_i$ and partial internal energies $e_i$. In other words, for a Type-II model it is not sufficient to be given solely the Helmholtz free energy, as it is for a Type-I model \cite{BDre}. Therefore, we require some more information: a first approach is to set up constitutive functions at the level of the components for $(e_i)_i$ and $(p_i)_i$. 
Since the total pressure and energy can be computed only $n-1$ constitutive functions need to be specified. 
Another approach that is often used is the so-called simple mixture theory.  In this simplification each component is assumed to have its own thermodynamical structure
(as if it were on its own), that is, it is described by its own free energy density $\psi_i=\psi_i(\rho_i,\theta)$ and the thermodynamic
functions of the component are determined via the relations: 
\begin{equation} \label{appB-eq12}
    e_i=e_i(\rho_i,\theta)=-\theta^2\left(\frac{\psi_i}{\theta}\right)_\theta
\end{equation} \begin{equation} \label{appB-eq13}
    \mu_i=\mu_i(\rho_i,\theta)=(\rho_i\psi_i)_{\rho_i}
\end{equation} \begin{equation} \label{appB-eq14}
    -\eta_i=(\psi_i)_\theta
\end{equation} \begin{equation} \label{appB-eq15}
    p_i=p_i(\rho_i,\theta)=-\rho_i\psi_i+\rho_i\mu_i
\end{equation}
This also has the implication that $\rho\psi=\sum_i\rho_i\psi_i$ and $\rho\eta=\sum_i\rho_i\eta_i$. For an extensive discussion on simple mixtures we refer to \cite{Mue}, while for special cases of simple mixtures to \cite[sec 15]{BDre}.


\section{Derivation of the Entropy Equation for the Type-II system}\label{appC}

For the derivation of equation \eqref{eq-entropytypeII}, one multiplies the mass equations \eqref{intro-eq1} by $\frac{1}{2}v_i^2-\mu_i$, the momentum equations \eqref{intro-eq2} by $-v_i$ and the energy equation \eqref{intro-eq3} by $1$ and sums them up to obtain, by making use of the definition $u_i=v_i-v$ and noticing that $u_i-u_j=v_i-v_j$ the following: \[ \begin{split}
    \del_t & (\rho\eta\theta)-\del_tp+\diver(\rho\eta\theta v)-\diver\left(\sum_i\rho_i\mu_iu_i\right)-\diver\left(\frac{1}{2}\sum_i\rho_iv_i^2u_i\right)-\diver(pv) \\
    & = -\sum_i\rho_i\del_t\mu_i-\sum_i\rho_iv_i\cdot\nabla\mu_i-\sum_i\rho_ib_i\cdot v_i+\sum_i\rho_i\nabla\mu_i\cdot v_i+\frac{\nabla\theta}{\theta}\sum_i(\rho_ie_i+p_i-\rho_i\mu_i)v_i \\
    & + \frac{\theta}{\epsilon}\sum_{i,j}b_{ij}\rho_i\rho_j(v_i-v_j)v_i+\diver(\kappa\nabla\theta)-\diver\left(\sum_i(\rho_ie_i+p_i)u_i\right)-\diver\left(\frac{1}{2}\sum_i\rho_iv_i^2u_i\right) \\
    & - \diver(pv) + \rho b\cdot v+\rho r+\sum_i\rho_ib_i\cdot u_i
\end{split} \] Now doing the necessary simplifications and using the Gibbs-Duhem relation, one obtains \[ \begin{split}
    \del_t & (\rho\eta\theta)+\diver(\rho\eta\theta v)=\diver\left(\kappa\nabla\theta-\sum_i(\rho_ie_i+p_i-\rho_i\mu_i)u_i\right)+\frac{\nabla\theta}{\theta}\sum_i(\rho_ie_i+p_i-\rho_i\mu_i)u_i \\
    & + \del_tp-\sum_i\rho_i\del_t\mu_i+\rho\eta v\cdot\nabla\theta+\frac{\theta}{\epsilon}\sum_{i,j}b_{ij}\rho_i\rho_j(v_i-v_j)v_i+\rho r
\end{split} \] If we differentiate the Gibbs-Duhem relation with respect to time we get \[ \del_tp=\rho\eta\del_t\theta+\sum_i\rho_i\del_t\mu_i \] thus, we divide by $\theta$ to obtain \[ \begin{split}
    \del_t (\rho\eta)+\diver(\rho\eta v) & = \diver\left(\frac{1}{\theta}\kappa\nabla\theta-\frac{1}{\theta}\sum_i(\rho_ie_i+p_i-\rho_i\mu_i)u_i\right) \\
    & + \frac{1}{\theta^2}\kappa|\nabla\theta|^2+\frac{1}{\epsilon}\sum_{i,j}b_{ij}\rho_i\rho_j(v_i-v_j)v_i +\frac{\rho r}{\theta}
\end{split} \] Finally, note that due to the symmetry of $b_{ij}$ and the fact that $u_i-u_j=v_i-v_j$ we find that \[ \frac{1}{\epsilon}\sum_{i,j}b_{ij}\rho_i\rho_j(v_i-v_j)v_i=\frac{1}{2\epsilon}\sum_{i,j}b_{ij}\rho_i\rho_j(v_i-v_j)^2=\frac{1}{2\epsilon}\sum_{i,j}b_{ij}\rho_i\rho_j(u_i-u_j)^2 \] which concludes the computation.


\section{Relative Entropy Identity}\label{appD}

In this appendix we present the computations leading to the derivation of \eqref{conv-eq15}. We first add \eqref{conv-eq13} and \eqref{conv-eq14} and use the formula \eqref{relen}
and 
\begin{equation} \label{appC-eq1}
  \begin{split}
        Q(U|\bar{U}) & =-\rho\eta v\bar{\theta}+\bar{\rho}\bar{\eta}\bar{v}\bar{\theta}+\sum_i\left(\frac{1}{2}\bar{v}^2-\bar{\mu}_i\right)(\rho_iv-\bar{\rho}_i\bar{v})-\bar{v}\rho v^2 \\
        & + \bar{v}\bar{\rho}\bar{v}^2-\bar{v}(p-\bar{p})+\left(\rho e+\frac{1}{2}\rho v^2+p\right)v-\left(\bar{\rho}\bar{e}+\frac{1}{2}\bar{\rho}\bar{v}^2+\bar{p}\right)\bar{v} \\
        & = vI(U|\bar{U})+(p-\bar{p})(v-\bar{v})
    \end{split}
\end{equation}
to arrive at
\begin{equation} \label{appC-eq2}
\del_tI(U|\bar{U})+\textnormal{div}Q(U|\bar{U})= T_1+T_2+T_3+T_4
\end{equation}
where \[ T_1:=\sum_i\del_t\left(\frac{1}{2}\bar{v}^2-\bar{\mu}_i\right)(\rho_i-\bar{\rho_i})-\del_t\bar{v}\cdot(\rho v-\bar{\rho}\bar{v})-\del_t\bar{\theta}(\rho\eta-\bar{\rho}\bar{\eta}) \]  \[ T_2:=\sum_i\nabla\left(\frac{1}{2}\bar{v}^2-\bar{\mu}_i\right)\cdot(\rho_iv-\bar{\rho}_i\bar{v})-\rho v\nabla\bar{v}\cdot v+\bar{\rho}\bar{v}\nabla\bar{v}\cdot\bar{v}-(p-\bar{p})\textnormal{div}\bar{v}+\nabla\bar{\theta}\cdot(-\rho\eta v+\bar{\rho}\bar{\eta}\bar{v}) \] \[ \begin{split}
    T_3 & := \textnormal{div}\left(-\frac{\bar{\theta}}{\theta}\kappa\nabla\theta\right)+\frac{1}{\theta}\kappa\nabla\theta\cdot\nabla\bar{\theta}-\frac{\bar{\theta}}{\theta^2}\kappa|\nabla\theta|^2+\textnormal{div}\left(\frac{\bar{\theta}}{\bar{\theta}}\bar{\kappa}\nabla\bar{\theta}\right)-\frac{1}{\bar{\theta}}\bar{\kappa}\nabla\bar{\theta}\cdot\nabla\bar{\theta} \\
    & + \frac{\bar{\theta}}{\bar{\theta}^2}\bar{\kappa}|\nabla\bar{\theta}|^2+\textnormal{div}\left(\kappa\nabla\theta\right)-\textnormal{div}(\bar{\kappa}\nabla\bar{\theta})
\end{split} \] and \[ \begin{split}
    T_4 & := \textnormal{div}\left(\frac{\bar{\theta}}{\theta}\sum_j(h_j-\rho_j\mu_j)u_j\right)-\frac{1}{\theta}\sum_j(h_j-\rho_j\mu_j)u_j\cdot\nabla\bar{\theta}+\frac{\bar{\theta}}{\theta}\sum_ju_j\cdot d_j \\
    & -\sum_i\left(\frac{1}{2}\bar{v}^2-\bar{\mu}_i\right)\textnormal{div}(\rho_iu_i)+\textnormal{div}\left(-\sum_jh_ju_j\right)
\end{split} \] Now \[ \begin{split}
    T_1 & = -\rho(v-\bar{v})\del_t\bar{v}+\del_t\bar{\theta}(-\rho\eta)(U|\bar{U})-(\bar{\rho}\bar{\eta})_\theta(\theta-\bar{\theta})\del_t\bar{\theta}-\sum_j\sum_i(\bar{\mu}_j)_{\rho_i}(\rho_i-\bar{\rho}_i)\del_t\bar{\rho}_j \\
    & =: T_{11}+T_{12}+T_{13}+T_{14}
\end{split} \] where we have used the fact that $(\mu_i)_\theta=-(\rho\eta)_{\rho_i}$ and introduced the relative quantity \[ (-\rho\eta)(U|\bar{U})=-\rho\eta+\bar{\rho}\bar{\eta}+\sum_j(\bar{\rho}\bar{\eta})_{\rho_j}(\rho_j-\bar{\rho}_j)+(\bar{\rho}\bar{\eta})_\theta(\theta-\bar{\theta}) \]   Using the entropy balance \eqref{conv-eq10} we get \[ \begin{split}
    T_{13} & = \del_t(-\bar{\rho}\bar{\eta})(\theta-\bar{\theta})+\sum_j(\bar{\rho}\bar{\eta})_{\rho_j}\del_t\bar{\rho}_j(\theta-\bar{\theta}) \\
    & = \textnormal{div}(\bar{\rho}\bar{\eta}\bar{v})(\theta-\bar{\theta})+\textnormal{div}\left(-\frac{1}{\bar{\theta}}\bar{\kappa}\nabla\bar{\theta}\right)(\theta-\bar{\theta})-\frac{1}{\bar{\theta}^2}\bar{\kappa}|\nabla\bar{\theta}|^2(\theta-\bar{\theta}) \\
    & - \sum_j(\bar{\rho}\bar{\eta})_{\rho_j}\nabla\bar{\rho}_j\cdot\bar{v}(\theta-\bar{\theta})-\sum_j(\bar{\rho}\bar{\eta})_{\rho_j}\bar{\rho}_j\textnormal{div}\bar{v}(\theta-\bar{\theta}) \\
    & =: T_{131}+T_{132}+T_{133}+T_{134}+T_{135}
\end{split} \] where \[ \begin{split}
    T_{131} & = \nabla(\bar{\rho}\bar{\eta})\cdot\bar{v}(\theta-\bar{\theta})+\bar{\rho}\bar{\eta}\textnormal{div}\bar{v}(\theta-\bar{\theta}) \\
    & =: T_{1311}+T_{1312}
\end{split} \] Furthermore \[ \begin{split}
    T_{14} & = \sum_j\sum_i(\bar{\mu}_{i})_{\rho_j}(\rho_i-\bar{\rho}_i)\nabla\bar{\rho}_j\cdot\bar{v}+\sum_j\sum_i(\bar{\mu}_i)_{\rho_j}(\rho_i-\bar{\rho}_i)\bar{\rho_j}\textnormal{div}\bar{v} \\
    & = \sum_i\nabla\bar{\mu}_i\cdot\bar{v}(\rho_i-\bar{\rho}_i)-\sum_i(\bar{\mu}_i)_\theta\nabla\bar{\theta}\cdot\bar{v}(\rho_i-\bar{\rho}_i)+\sum_j\sum_i(\bar{\mu}_i)_{\rho_j}(\rho_i-\bar{\rho}_i)\bar{\rho_j}\textnormal{div}\bar{v} \\
    & =: T_{141}+T_{142}+T_{143}
\end{split} \] Introducing the relative pressure \[ p(U|\bar{U})=p-\bar{p}-\sum_j\bar{p}_{\rho_j}(\rho_j-\bar{\rho}_j)-\bar{p}_\theta(\theta-\bar{\theta}) \] we get \[ \begin{split}
    T_2 & = -\rho(v-\bar{v})\nabla\bar{v}\cdot(v-\bar{v})-\sum_i\nabla\bar{\mu}_i\cdot(\rho_iv-\bar{\rho}_i\bar{v})-p(U|\bar{U})\textnormal{div}\bar{v}-\sum_j\bar{p}_{\rho_j}(\rho_j-\bar{\rho}_j)\textnormal{div}\bar{v} \\
    & - \bar{p}_\theta(\theta-\bar{\theta})\textnormal{div}\bar{v}+\bar{v}\cdot\nabla\bar{\theta}(-\rho\eta)(U|\bar{U})-\bar{v}\cdot\nabla\bar{\theta}\sum_j(\bar{\rho}\bar{\eta})_{\rho_j}(\rho_j-\bar{\rho}_j)-\bar{v}\cdot\nabla\bar{\theta}(\bar{\rho}\bar{\eta})_\theta(\theta-\bar{\theta}) \\
    & + \nabla\bar{\theta}(-\rho\eta)\cdot(v-\bar{v})-\rho\bar{v}\nabla\bar{v}\cdot(v-\bar{v}) \\
    & =: T_{21}+\cdots+T_{210}
\end{split} \] where \[ \begin{split}
    T_{22} & = -\sum_i\nabla\bar{\mu}_i\cdot(\rho_i-\bar{\rho}_i)v-\sum_i\nabla\bar{\mu}_i\cdot\bar{\rho}_i(v-\bar{v}) \\
    & = -\sum_i\nabla\bar{\mu}_i\cdot(\rho_i-\bar{\rho}_i)(v-\bar{v})-\sum_i\nabla\bar{\mu}_i\cdot(\rho_i-\bar{\rho}_i)\bar{v}+(v-\bar{v})\bar{\rho}\bar{\eta}\cdot\nabla\bar{\theta}-(v-\bar{v})\cdot\nabla\bar{p} \\
    & =: T_{221}+T_{222}+T_{223}+T_{224}
\end{split} \] Now, $T_{28}$ and $T_{134}$ cancel with $T_{1311}$ and the same holds for $T_{141}$ with $T_{222}$ and $T_{142}$ with $T_{27}$. Moreover \[ T_{29}+T_{223}=-(\rho\eta-\bar{\rho}\bar{\eta})(v-\bar{v})\cdot\nabla\bar{\theta} \] and by the momentum balance \eqref{conv-eq8} \[ \begin{split}
    T_{224} & = \frac{1}{\bar{\rho}}(\rho-\bar{\rho})(v-\bar{v})\cdot\nabla\bar{p}-\frac{\rho}{\bar{\rho}}(v-\bar{v})\cdot\nabla\bar{p} \\
    & = \frac{1}{\bar{\rho}}(\rho-\bar{\rho})(v-\bar{v})\cdot\nabla\bar{p}-\rho(v-\bar{v})\cdot\del_t\bar{v}+\rho\bar{v}\nabla\bar{v}\cdot(v-\bar{v}) \\
    & =: T_{2241}+T_{2242}+T_{2243}
\end{split} \] where $T_{2242}$ cancels with $T_{11}$ and $T_{2243}$ with $T_{210}$.

Differentiation of the Gibbs-Duhem relation \eqref{intro-gd} with respect to mass density and temperature, respectively,
gives
 \begin{equation} \label{appC-eq3}
 p_{\rho_i}=\sum_j\rho_j(\mu_j)_{\rho_i} \, ,  \qquad  p_\theta= \rho\eta+ \sum_j\rho_j(\mu_j)_\theta  \, . 
 \end{equation}
 Hence $T_{25},T_{1312}$ cancel with $T_{135}$ and $T_{24}$ with $T_{143}$. Finally, taking the gradient of the Gibbs-Duhem relation and using \eqref{appC-eq3}, we obtain the following identity \[ \nabla p=\rho\eta\nabla\theta+\sum_j\rho_j\nabla\mu_j \] which allows us to write 
\[ \begin{split}
    & - (\rho\eta-\bar{\rho}\bar{\eta})(v-\bar{v})\cdot\nabla\bar{\theta}-\sum_j\nabla\bar{\mu}_j\cdot\left(\rho_j-\bar{\rho}_j\right)(v-\bar{v})+\frac{1}{\bar{\rho}}(\rho-\bar{\rho})(v-\bar{v})\cdot\nabla\bar{p} 
\\
    & = -(\eta-\bar{\eta})\rho(v-\bar{v})\cdot\nabla\bar{\theta}-\sum_j\nabla\bar{\mu}_j\left(\frac{\rho_j}{\rho}-\frac{\bar{\rho}_j}{\bar{\rho}}\right)\cdot\rho(v-\bar{v})
\end{split} \] Therefore \begin{equation} \label{appC-eq4}
     \begin{split}
         T_1+T_2 & = (\del_t\bar{\theta}+\bar{v}\cdot\nabla\bar{\theta})(-\rho\eta)(\omega|\bar{\omega})-p(\omega|\bar{\omega})\textnormal{div}\bar{v}-(\eta-\bar{\eta})\rho(v-\bar{v})\cdot\nabla\bar{\theta} \\
    &\;\;  -\sum_j\nabla\bar{\mu}_j\left(\frac{\rho_j}{\rho}-\frac{\bar{\rho}_j}{\bar{\rho}}\right)\cdot\rho(v-\bar{v})-\rho(v-\bar{v})\nabla\bar{v}\cdot(v-\bar{v})+T_{132}+T_{133}
     \end{split}
 \end{equation} 
 
 Regarding $T_3$, we have \[ \begin{split}
    T_3 & = -\textnormal{div}\left(\bar{\theta}\frac{1}{\theta}\kappa\nabla\theta-\bar{\theta}\frac{1}{\bar{\theta}}\bar{\kappa}\nabla\bar{\theta}\right)+\textnormal{div}\left(\kappa\nabla\theta-\bar{\kappa}\nabla\bar{\theta}\right) \\
    & + \nabla\bar{\theta}\left(\frac{1}{\theta}\kappa\nabla\theta-\frac{1}{\bar{\theta}}\bar{\kappa}\nabla\bar{\theta}\right)-\bar{\theta}\left(\frac{1}{\theta^2}\kappa|\nabla\theta|^2-\frac{1}{\bar{\theta}^2}\bar{\kappa}|\nabla\bar{\theta}|^2\right)
\end{split} \] and thus \begin{equation} \label{appC-eq5}
    \begin{split}
    T_3+T_{132}+T_{133} & = \textnormal{div}\left[(\theta-\bar{\theta})\left(\frac{1}{\theta}\kappa\nabla\theta-\frac{1}{\bar{\theta}}\bar{\kappa}\nabla\bar{\theta}\right)\right]-\bar{\theta}\kappa\left(\frac{\nabla\theta}{\theta}-\frac{\nabla\bar{\theta}}{\bar{\theta}}\right)^2 \\
    & - \left(\frac{\nabla\theta}{\theta}-\frac{\nabla\bar{\theta}}{\bar{\theta}}\right)\frac{\nabla\bar{\theta}}{\bar{\theta}}(\bar{\theta}\kappa-\theta\bar{\kappa})
\end{split}
\end{equation} As for $T_4$, we have \begin{equation} \label{appC-eq6}
    \begin{split}
    T_4 & = -\textnormal{div}\Big[\sum_j\rho_ju_j(\mu_j-\bar{\mu}_j)+\frac{1}{\theta}(\theta-\bar{\theta})\sum_j(h_j-\rho_j\mu_j)u_j\Big]+\frac{\bar{\theta}}{\theta}\sum_ju_j\cdot d_j \\
    & -\sum_j\nabla\bar{\mu}_j\cdot\rho_ju_j-\frac{1}{\bar{\theta}}\nabla\bar{\theta}\sum_j(h_j-\rho_j\mu_j)u_j
\end{split}
\end{equation} Then, by \eqref{appC-eq1}-\eqref{appC-eq2} and \eqref{appC-eq4}-\eqref{appC-eq6} we obtain the relative entropy identity \eqref{conv-eq15}.

A more elegant, however not more advantageous way of writing \eqref{conv-eq15} is by noting that \begin{equation}
    (-\rho\eta)(\omega|\bar{\omega})\bar{v}\cdot\nabla\bar{\theta}-(\eta-\bar{\eta})\rho(v-\bar{v})\cdot\nabla\bar{\theta}=(-\rho\eta v)(U|\bar{U})\cdot\nabla\bar{\theta}+\bar{\eta}(\rho-\bar{\rho})(v-\bar{v})\cdot\nabla\bar{\theta}
\end{equation} Indeed, \[ \begin{split}
    (-\rho\eta v)(U|\bar{U})+\bar{\eta}(\rho-\bar{\rho})(v-\bar{v}) & = -\rho\eta v+\bar{\rho}\bar{\eta}\bar{v}+\sum_i(\bar{\rho}\bar{\eta})_{\rho_i}\bar{v}(\rho_i-\bar{\rho}_i)+\bar{\rho}\bar{\eta}(v-\bar{v}) \\
    & + (\bar{\rho}\bar{\eta})_\theta\bar{v}(\theta-\bar{\theta})+\bar{\eta}v\rho-\bar{\eta}v\bar{\rho}-\bar{\eta}\bar{v}\rho+\bar{\eta}\bar{v}\bar{\rho} \\
    & = (-\rho\eta)(\omega|\bar{\omega})\bar{v}+\rho\eta\bar{v}-\rho\eta v+\rho\bar{\eta}v-\rho\bar{\eta}\bar{v} \\
    & = (-\rho\eta)(\omega|\bar{\omega})\bar{v}-(\eta-\bar{\eta})\rho(v-\bar{v})
\end{split} \] in which case the right-hand side $\textnormal{RHS}$ of \eqref{conv-eq15} reads \[ \begin{split}
   \textnormal{RHS} =& \;  \del_t\bar{\theta}(-\rho\eta)(\omega|\bar{\omega})-\rho(v-\bar{v})\nabla\bar{v}(v-\bar{v})-p(\omega|\bar{\omega})\textnormal{div}\bar{v}+(-\rho\eta v)(U|\bar{U})\cdot \nabla\bar{\theta} \\
    & + \bar{\eta}(\rho-\bar{\rho})(v-\bar{v})\cdot\nabla\bar{\theta}-\sum_i\nabla\bar{\mu}_i\left(\frac{\rho_i}{\rho}-\frac{\bar{\rho}_i}{\bar{\rho}}\right)\rho(v-\bar{v})-\sum_i\nabla\bar{\mu}_i\cdot\rho_iu_i \\
    & - \left(\frac{\nabla\theta}{\theta}-\frac{\nabla\bar{\theta}}{\bar{\theta}}\right)\frac{\nabla\bar{\theta}}{\bar{\theta}}(\bar{\theta}\kappa-\theta\bar{\kappa})-\frac{1}{\bar{\theta}}\nabla\bar{\theta}\sum_i(h_i-\rho_i\mu_i)u_i.
\end{split} \]

\end{appendix}

\medskip
\noindent
{\bf Acknowledgement} We would like to thank the anonymous referees for their very helpful comments as well as bringing certain references to
our attention.


\begin{thebibliography}{11}

\bibitem{Bot} 
\textnormal{\textsc{D. Bothe}, On the Maxwell-Stefan approach to multicomponent diffusion, in "{\it Parabolic problems}", 
Progr. Nonlinear Differential Equations Appl.,  Vol. 80, Birkhäuser/Springer Basel AG, Basel, 2011, pp. 81-93. }

\bibitem{BDre} 
\textnormal{\textsc{D. Bothe, W. Dreyer}, Continuum Thermodynamics of chemically reacting fluid mixtures, {\it Acta Mech.} {\bf 226} (2015), 1757-1805. }

\bibitem{BDru} 
\textnormal{\textsc{D. Bothe, P.E. Druet}, Mass transport in multicomponent compressible fluids: local and global well-posedness in classes of strong solutions for general class-one models, 
{\it Nonlinear Anal.} {\bf 210} (2021), Paper No. 112389, 53 pp.}

\bibitem{BGP}
\textnormal{\textsc{L. Boudin, B. Grec and V. Pavan}, Diffusion models for mixtures using a stiff dissipative hyperbolic formalism, 
{\it J. Hyperbolic Differential Equations} {\bf 16} (2019), 293-312.}


\bibitem{BJPZ} 
\textnormal{\textsc{M. Buli\v{c}ek, A. J\"ungel, M. Pokorn\'y, N. Zamponi}, Existence analysis of a stationary compressible fluid model for heat-conducting and chemically reacting mixtures, 2020, preprint arXiv:2001.06082.}

\bibitem{Cal} 
\textnormal{\textsc{H. Callen}, {\it Thermodynamics and an introduction to thermostatistics}, New York: John Wiley \& Sons (1985).}

\bibitem{CT} 
\textnormal{\textsc{C. Christoforou, A.E. Tzavaras}, Relative entropy for hyperbolic–parabolic systems and application to the constitutive
theory of thermoviscoelasticity, {\it Arch. Rational Mech. Anal.} {\bf 229} (2018) 1-52.}

\bibitem{Daf} 
\textnormal{\textsc{C. M. Dafermos}, Stability of motions of thermoelastic fluids, {\it J. Thermal Stresses} {\bf 2} (1979), 127-134. }

\bibitem{Dafbook}
\textnormal{\textsc{C. M. Dafermos}, {\em Hyperbolic Conservation Laws in Continuum Physics}, Fourth Edition.
Grundlehren der Mathematischen Wissenschaften, 325, Springer Verlag, Berlin, 2016. }

\bibitem{DP} 
\textnormal{\textsc{D. Drew, S. Passman}, {\it Theory of Multicomponent Fluids}, Springer-Verlag Berlin Heidelberg 1999. }

\bibitem{D} 
\textnormal{\textsc{P.E. Druet}, Maximal mixed parabolic-hyperbolic regularity for the full equations of multicomponent fluid dynamics, 2021, preprint arXiv:	arXiv:2108.13681.}

\bibitem{GioMulti} 
\textnormal{\textsc{V. Giovangigli}, {\it Multicomponent Flow Modeling}, Birkh\"auser Boston 1999.}

\bibitem{GM}
\textnormal{\textsc{V. Giovangigli, M. Massot},
The local Cauchy problem for multicomponent reactive flows in full vibrational non-equilibrium. 
{\it Math. Methods Appl. Sci.} {\bf 21} (1998), no. 15, 1415-1439. }

\bibitem{GYY} 
\textnormal{\textsc{V. Giovangigli, Z.-B. Yang, W.-A. Yong}, Relaxation limit and initial-layers for a class of hyperbolic-parabolic systems, 
{\it SIAM J. Math. Anal.}  {\bf 50} (2018), no. 4, 4655-4697.}

\bibitem{HJ}
\textnormal{\textsc{R. Horn, C. Johnson},
\textit{Matrix Analysis},  2nd Ed., Cambridge University Press, Cambridge, UK, 2013}. 

\bibitem{HJT} 
\textnormal{\textsc{X. Huo, A. J\"ungel, A.E. Tzavaras}}, High-friction limits of Euler flows for multicomponent systems, {\it Nonlinearity} {\bf 32} (2019), 2875-2913.

\bibitem{MaxS} 
\textnormal{\textsc{X. Huo, A. J\"ungel, A.E. Tzavaras}, Weak-Strong Uniqueness for Maxwell-Stefan Systems, 2021, preprint arXiv:2110.05331.}

\bibitem{iesan}
\textnormal{\textsc{D. Ie\c{s}an}, On the stability of Motions of Thermoelastic Fluids, {\it J. Thermal Stresses} {\bf 17} (1994), 409-418. }

\bibitem{JS} 
\textnormal{\textsc{A. J\"ungel, I.V. Stelzer}, Existence analysis of Maxwell-Stefan systems for multicomponent mixtures, {\it SIAM J. Math. Anal.} {\bf 45} (2013), 2421-2440.}

\bibitem{Kaw} 
\textnormal{\textsc{S. Kawashima}, Systems of a Hyperbolic-Parabolic Composite Type, with Applications to the Equations of Magnetohydrodynamics , PhD. Thesis, Kyoto University, 1984.}

\bibitem{MPZExi} 
\textnormal{\textsc{P.B. Mucha, M. Pokorn\'y, E. Zatorska}, Existence of stationary weak solutions for compressible heat conducting flows, in {\it Handbook of mathematical analysis in mechanics of viscous fluids},  Y. Giga and A. Novotny, eds, Springer, Cham, 2018, pp. 2595-2662.}

\bibitem{MPZConstr} 
\textnormal{\textsc{P.B. Mucha, M. Pokorn\'y, E. Zatorska}, Heat-conducting, compressible mixtures with multicomponent diffusion: construction of a weak solution, 
{\it SIAM J. Math. Anal.} {\bf 47} (2015), no. 5, 3747-3797.}

\bibitem{Mue} 
\textnormal{\textsc{I. M\"uller}, A Thermodynamic Theory of Mixtures of Fluids, {\it Arch. Rational  Mech. Anal.} \textbf{28} (1968), 1–39}

\bibitem{MR} 
\textnormal{\textsc{I. M\"uller, T. Ruggeri}, {\it Extended Thermodynamics}, Springer, New York 1993.}

\bibitem{OR} 
\textnormal{\textsc{L. Ostrowski, C. Rohde}, Compressible multicomponent flow in porous media with Maxwell-Stefan diffusion, {\it Math. Methods Appl. Sci.} {\bf 43}, (2020), no. 7, 4200–4221.}

\bibitem{PSZMulti} 
\textnormal{\textsc{T. Piasecki, Y. Shibata, E. Zatorska}, On the isothermal compressible multi-component mixture flow: The local existence and maximal Lp-Lq regularity of solutions, {\it Nonlinear Anal. } {\bf189} (2019), 111571, 27 pp.}

\bibitem{PSZTwo} 
\textnormal{\textsc{T. Piasecki, Y. Shibata, E. Zatorska}, On strong dynamics of compressible two-component mixture flow, {\it SIAM J. Math. Anal.} {\bf 51} (2019), no. 4, 2793-2849.}

\bibitem{Rug}
\textnormal{\textsc{T. Ruggeri}, Multi-temperature mixture of fluid, {\it Theoret. Appl. Mech.} \textbf{36}, No.3, Belgrade 2009, pp. 207-238.}

\bibitem{JSi}
\textnormal{\textsc{J. Silvester}, Determinants of Block Matrices, Mathematical Gazette, The Mathematical Association, 2000, 84 (501), pp.460-467.}

\bibitem{YZ}
\textnormal{\textsc{L. Yang, E. Zatorska}, On weak solutions to the compressible inviscid two-fluid model, {\it J. Differential Equations} {\bf 299} (2021), 33-50.}

\bibitem{ZatMix}
\textnormal{\textsc{E. Zatorska}, Mixtures: sequential stability of variational entropy solutions, {\it J. Math. Fluid Mech} {\bf 17} (2015), no. 3, 437-461.}







\end{thebibliography}
\end{document}